\def\row#1/#2!{#1_{\IfStrEq{#2}{}{n}{#2}} & \dynkin{#1}{#2}\\}
\definecolor{YK}{rgb}{9,0,0}
\definecolor{YKb}{rgb}{0,0,9}
 \date{}
\begin{document}



\setlength{\parindent}{5mm}
\renewcommand{\leq}{\leqslant}
\renewcommand{\geq}{\geqslant}
\newcommand{\N}{\mathbb{N}}
\newcommand{\sph}{\mathbb{S}}
\newcommand{\Z}{\mathbb{Z}}
\newcommand{\R}{\mathbb{R}}
\newcommand{\Q}{\mathbb{Q}}
\newcommand{\C}{\mathbb{C}}
\newcommand{\curlL}{\mathcal{L}}
\newcommand{\A}{\mathcal{A}}
\newcommand{\curlP}{\mathcal{P}}
\newcommand{\F}{\mathbb{F}}
\newcommand{\g}{\mathfrak{g}}
\newcommand{\h}{\mathfrak{h}}
\newcommand{\K}{\mathbb{K}}
\newcommand{\RN}{\mathbb{R}^{2n}}
\newcommand{\ci}{c^{\infty}}
\newcommand{\derive}[2]{\frac{\partial{#1}}{\partial{#2}}}
\renewcommand{\S}{\mathbb{S}}
\renewcommand{\H}{\mathbb{H}}
\newcommand{\eps}{\varepsilon}
\newcommand{\lag}{Lagrangian}
\newcommand{\sub}{submanifold}
\newcommand{\homo}{homogeneous}
\newcommand{\qmor}{quasimorphism}
\newcommand{\enum}{enumerate}
\newcommand{\sa}{symplectically aspherical}
\newcommand{\ovl}{\overline}
\newcommand{\wt}{\widetilde}
\newcommand{\hamd}{Hamiltonian diffeomorphism}
\newcommand{\QH}{quantum homology ring}
\newcommand{\thm}{Theorem}
\newcommand{\cor}{Corollary}
\newcommand{\hamil}{Hamiltonian}
\newcommand{\propo}{Proposition}
\newcommand{\conjec}{Conjecture}
\newcommand{\asympt}{asymptotic}
\newcommand{\PR}{pseudo-rotation}
\newcommand{\fuk}{\mathscr{F}}
\newcommand{\specinv}{spectral invariant}
\newcommand{\wrt}{with respect to}
\newcommand{\suphv}{superheavy}
\newcommand{\suphvness}{superheaviness}
\newcommand{\symp}{symplectic}
\newcommand\oast{\stackMath\mathbin{\stackinset{c}{0ex}{c}{0ex}{\ast}{\bigcirc}}}
\newcommand{\diffeo}{diffeomorphism}
\newcommand{\quasi}{quasimorphism}
\newcommand{\univham}{\widetilde{\Ham}}
\newcommand{\BK}{Biran--Khanevsky}
\newcommand{\BC}{Biran--Cornea}
\newcommand{\TV}{Tonkonog--Varolgunes}
\newcommand{\varol}{Varolgunes}
\newcommand{\mfd}{manifold}
\newcommand{\smfd}{submanifold}
\newcommand{\EP}{Entov--Polterovich}
\newcommand{\pol}{Polterovich}
\newcommand{\suppot}{superpotential}
\newcommand{\CY}{Calabi--Yau}
\newcommand{\degen}{degenerate}
\newcommand{\coeff}{coefficient}
\newcommand{\acs}{almost complex structure}
\newcommand{\prl}{pearl trajectory}
\newcommand{\prls}{pearl trajectories}
\newcommand{\holo}{holomorphic}
\newcommand{\trans}{transversality}
\newcommand{\pert}{perturbation}
\newcommand{\idem}{idempotent}
\newcommand{\critpt}{critical point}
\newcommand{\tordeg}{toric degeneration}
\newcommand{\nbhd}{neighborhood}
\newcommand{\polar}{polarization}
\newcommand{\symplecto}{symplectomorphism}
\newcommand{\FOOO}{Fukaya--Oh--Ohta--Ono}
\newcommand{\NNU}{Nishinou--Nohara--Ueda}

\theoremstyle{plain}
\newtheorem{theo}{Theorem}
\newtheorem{theox}{Theorem}
\renewcommand{\thetheox}{\Alph{theox}}
\numberwithin{theo}{subsection}
\newtheorem{prop}[theo]{Proposition}
\newtheorem{lemma}[theo]{Lemma}
\newtheorem{definition}[theo]{Definition}
\newtheorem*{notation*}{Notation}
\newtheorem*{notations*}{Notations}
\newtheorem{corol}[theo]{Corollary}
\newtheorem{conj}[theo]{Conjecture}
\newtheorem{guess}[theo]{Guess}
\newtheorem{claim}[theo]{Claim}
\newtheorem{question}[theo]{Question}
\newtheorem{prob}[theo]{Problem}
\numberwithin{equation}{subsection}

\newenvironment{demo}[1][]{\addvspace{8mm} \emph{Proof #1.
    ~~}}{~~~$\Box$\bigskip}

\newlength{\espaceavantspecialthm}
\newlength{\espaceapresspecialthm}
\setlength{\espaceavantspecialthm}{\topsep} \setlength{\espaceapresspecialthm}{\topsep}

\newenvironment{example}[1][]{\refstepcounter{theo} 
\vskip \espaceavantspecialthm \noindent \textsc{Example~\thetheo
#1.} }%
{\vskip \espaceapresspecialthm}

\newenvironment{remark}[1][]{\refstepcounter{theo} 
\vskip \espaceavantspecialthm \noindent \textsc{Remark~\thetheo
#1.} }%
{\vskip \espaceapresspecialthm}

\def\bb#1{\mathbb{#1}} \def\m#1{\mathcal{#1}}

\def\momeg{(M,\omega)}
\def\co{\colon\thinspace}
\def\Homeo{\mathrm{Homeo}}
\def\Diffeo{\mathrm{Diffeo}}
\def\Symp{\mathrm{Symp}}
\def\Sympeo{\mathrm{Sympeo}}
\def\id{\mathrm{id}}
\newcommand{\norm}[1]{||#1||}
\def\Ham{\mathrm{Ham}}
\def\lagham#1{\mathcal{L}^\mathrm{\Ham}({#1})}
\def\Hamtilde{\widetilde{\mathrm{\Ham}}}
\def\cOlag#1{\mathrm{Sympeo}({#1})}
\def\Crit{\mathrm{Crit}}
\def\diag{\mathrm{diag}}
\def\Spec{\mathrm{Spec}}
\def\osc{\mathrm{osc}}
\def\Cal{\mathrm{Cal}}
\def\Ker{\mathrm{Ker}}
\def\Hom{\mathrm{Hom}}
\def\FS{\mathrm{FS}}
\def\tor{\mathrm{tor}}
\def\Int{\mathrm{Int}}
\def\PD{\mathrm{PD}}
\def\Spec{\mathrm{Spec}}
\def\momeg{(M,\omega)}
\def\co{\colon\thinspace}
\def\Homeo{\mathrm{Homeo}}
\def\Hameo{\mathrm{Hameo}}
\def\Diffeo{\mathrm{Diffeo}}
\def\Symp{\mathrm{Symp}}
\def\Sympeo{\mathrm{Sympeo}}
\def\id{\mathrm{id}}
\def\Im{\mathrm{Im}}
\def\Ham{\mathrm{Ham}}
\def\lagham#1{\mathcal{L}^\mathrm{Ham}({#1})}
\def\Hamtilde{\widetilde{\mathrm{Ham}}}
\def\cOlag#1{\mathrm{Sympeo}({#1})}
\def\Crit{\mathrm{Crit}}
\def\dim{\mathrm{dim}}
\def\Spec{\mathrm{Spec}}
\def\osc{\mathrm{osc}}
\def\Cal{\mathrm{Cal}}
\def\Fix{\mathrm{Fix}}
\def\det{\mathrm{det}}
\def\Ker{\mathrm{Ker}}
\def\coker{\mathrm{coker}}
\def\Per{\mathrm{Per}}
\def\rank{\mathrm{rank}}
\def\Span{\mathrm{Span}}
\def\Supp{\mathrm{Supp}}
\def\Hof{\mathrm{Hof}}
\def\can{\mathrm{can}}
\def\grad{\mathrm{grad}}
\def\ind{\mathrm{ind}}
\def\Hor{\mathrm{Hor}}
\def\Vert{\mathrm{Vert}}
\def\Re{\mathrm{Re}}

\title{Donaldson divisors and spectral invariants}
\author{Yusuke Kawamoto}

\newcommand{\Addresses}{{
  \bigskip
  \footnotesize

   \textsc{Yusuke Kawamoto, Institute for Mathematical Research (FIM), ETH-Z\"urich, R\"amistrasse 101, 8092 Z\"urich Switzerland}\par\nopagebreak
  \textit{E-mail address}: \texttt{yusukekawamoto81@gmail.com, yusuke.kawamoto@math.ethz.ch} }}

\maketitle

\begin{abstract}
We establish a comparison between spectral invariants for a symplectic manifold and a Donaldson divisor therein, and answer a question of Borman from 2012 on the reduction of Entov--Polterovich quasimorphisms, under a reasonable assumption. The method involves a quantitative interpretation of Biran--Khanevsky's quantum Gysin sequence.

\end{abstract}

\tableofcontents

\section{Introduction}\label{intro}

\subsection{Historical context}

Since the seminal discovery of Donaldson \cite{[Don96]} in the mid 1990s, a hypersurface $\Sigma$ in a {\symp} {\mfd} $(X,\omega)$ that satisfies $PD([\Sigma])= k [\omega]$ for some integer $k$, which we will refer to as a \textit{Donaldson divisor}, became an important object in {\symp} topology and had many fruitful consequences, e.g. Seidel's proof of the Homological Mirror Symmetry conjecture for the quartic surfaces \cite{[Sei15]}. One remarkable fact about Donaldson divisors is that its complement, i.e. $X \backslash \Sigma$, has a nice symplecto-geometric structure, namely the Liouville structure.

Biran noticed in the early 2000s that not only the complement of a Donaldson divisor but also the complement of the \textit{skeleton} (which is, roughly speaking, the stable subset of the Liouville flow; see Section \ref{biran decomp} for its precise definition) has a nice symplecto-geometric structure, namely the symplectic disk bundle:
$$X \backslash \Delta \xrightarrow[]{\simeq}  D\Sigma $$
where $\Delta$ denotes the skeleton and $D\Sigma$ is a symplectic disk bundle over $\Sigma$ (see Section \ref{biran decomp}). This result is now known as the \textit{Biran decomposition} and have found applications in {\symp} embedding problems, {\lag} rigidity phenomena \cite{[Bir01],[Bir06]}.

In a different direction, around the same time as Biran's discovery, {\EP} \cite{[EP03]} brought a new insight to \textit{Hofer geometry}, i.e. the study of the geometry of the group of {\hamil} {\diffeo}s $\Ham (X)$\footnote{Its universal lift is denoted by $\wt{\Ham} (X)$).}, which is a fundamental object of study in {\symp} topology. {\EP} constructed \textit{quasimorphisms} on $\Ham (X)$ under some condition on the quantum cohomology ring (see Section \ref{EP qmors and superheaviness}), and this discovery triggered an extensive study of {\qmor}s in {\symp} topology, which is summarized in Entov's ICM-address \cite{[Ent14]}.

Borman \cite{[Bor12]} made an interesting observation that given a Donaldson divisor $\Sigma$ in $X$, {\qmor}s on $\wt{\Ham} (X)$ get pulled-back to {\qmor}s on $\wt{\Ham} (\Sigma)$ provided that the skeleton $\Delta$ is \textit{small} (see Section \ref{Borman's reduction result} for the precise meaning of smallness), i.e. there is a map
\begin{equation}\label{borman map}
    \Theta^{\ast}: \{\mu: \wt{\Ham} (X) \to \R : \text{{\qmor}} \} \to \{\mu: \wt{\Ham} (\Sigma) \to \R : \text{{\qmor}} \} .
\end{equation}

Note that Borman's result concerns not only {\qmor}s of the {\EP}-type but all {\qmor}s on $\wt{\Ham} (X)$. However, it was not clear if {\EP}-type {\qmor}s on $\wt{\Ham} (X)$ get pulled-back to {\EP}-type {\qmor}s on $\wt{\Ham} (\Sigma)$:

\begin{question}[Borman's question, \cite{[Bor12],[Bor13]}]\label{Borman's question}
Let $(X,\Sigma)$ be a pair of a {\symp} {\mfd} and a Donaldson divisor\footnote{Borman defines Donaldson divisors slighly differently; see Definition \ref{donaldson}}. Suppose that there is an {\EP} {\qmor} for $X$
$$\mu_{X} ^{EP} : \wt{\Ham} (X) \longrightarrow \R .$$
Is the pulled-back {\qmor} 
$$\Theta^{\ast} \mu_{X} ^{EP} : \wt{\Ham} (\Sigma) \longrightarrow \R $$
an {\EP} {\qmor}, i.e. does there exist an {\EP} {\qmor} for $\Sigma$ 
$$\mu_{\Sigma} ^{EP} : \wt{\Ham} (\Sigma) \longrightarrow \R $$
such that 
$$\Theta^{\ast} \mu_{X} ^{EP}  =\mu_{\Sigma} ^{EP} ? $$
\end{question}

The difficulty of this question comes from the fact that there is no natural ring homomorphism between $QH(X)$ and $QH(\Sigma)$, and there is no progress on this question since it was posed in the early 2010s. Note that there is some ambiguity in the Borman's question as a priori, it is not clear whether the quantum cohomology rings of the {\symp} manifolds $X$ and $\Sigma$ satisfy the condition which guarantees the existence of {\EP} {\qmor}s on $X$ and $\Sigma$.

\subsection{Main result}

The main aim of this paper is to answer Question \ref{Borman's question}. Once again, we emphasize that in Borman's question, it is not clear whether the quantum cohomology rings of the {\symp} manifolds $X$ and $\Sigma$ satisfy the condition which allows one get an {\EP} {\qmor}. 

We now state the main result of the paper which positively answers Borman's question under a reasonable condition.

\begin{theox}\label{main thm intro}
Let $(X,\Sigma)$ be a pair of a closed monotone {\symp} {\mfd} and a Donaldson divisor. Assume that there is a monotone {\lag} torus $L$ in $\Sigma$ whose {\suppot} $W_L$ has a non-degenerate critical point as well as its lifted monotone {\lag} torus $\wt{L}$. Then, there exists an {\EP} {\qmor} for $X$
$$\mu_{X} ^{EP} : \wt{\Ham} (X) \longrightarrow \R $$
for which the skeleton $\Delta$ is small {\wrt} $\mu_{X} ^{EP}$ and the pulled-back {\qmor} $ \Theta^\ast \mu_{X} ^{EP}$ is an {\EP} {\qmor} for $\Sigma$, i.e. there exist an {\EP} {\qmor} for $\Sigma$ 
$$\mu_{\Sigma} ^{EP} : \wt{\Ham} (\Sigma) \longrightarrow \R $$
such that 
$$\Theta^{\ast} \mu_{X} ^{EP}  =\mu_{\Sigma} ^{EP} .$$

\end{theox}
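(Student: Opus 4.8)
The plan is to build the desired Entov--Polterovich quasimorphism on $X$ from the hypothesized non-degenerate critical point of the superpotential $W_L$ downstairs, and then to identify its reduction with the corresponding quasimorphism on $\Sigma$ via the quantitative Gysin sequence. First I would use the non-degenerate critical point of $W_L$ to produce a field factor of $QH(\Sigma)$ split off by the Lagrangian torus $L$: by the work of \FOOO{} and \BC{}, a non-degenerate critical point of the superpotential of a monotone Lagrangian torus forces $QH(\Sigma)$ to contain a field summand $e_\Sigma QH(\Sigma) \cong \K$ with $L$ non-narrow (non-zero in the corresponding Floer homology), and $L$ is then a \suphv{} Lagrangian for the associated spectral quasi-state / {\EP} {\qmor} $\mu_\Sigma^{EP}$ on $\wt\Ham(\Sigma)$. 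I would run the same argument for the lifted torus $\wt L \subset X$: its superpotential $W_{\wt L}$ also has a non-degenerate critical point by hypothesis, so $QH(X)$ has a field summand and there is an {\EP} {\qmor} $\mu_X^{EP}$ on $\wt\Ham(X)$ making $\wt L$ \suphv{}.

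Next I would verify the smallness of the skeleton $\Delta$ with respect to this particular $\mu_X^{EP}$. The key point is that $\wt L$ lies in the disk-bundle part $D\Sigma = X\setminus\Delta$ of the Biran decomposition (it is the lift of $L\subset\Sigma$, sitting in a fiberwise-small copy of $L$ inside $D\Sigma$), and superheaviness of $\wt L$ forces $\Delta$, being disjoint from a \suphv{} set, to be non-\suphv{}; combined with the displaceability/smallness criteria in Section \ref{Borman's reduction result} this should give exactly the smallness condition needed for Borman's reduction map $\Theta^\ast$ of \eqref{borman map} to be defined on $\mu_X^{EP}$. Here one has to be a little careful that ``small'' in Borman's sense is the precise hypothesis under which $\Theta^\ast\mu_X^{EP}$ is a genuine {\qmor} on $\wt\Ham(\Sigma)$; I would quote the relevant statement rather than reprove it.

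The heart of the matter is then to show $\Theta^\ast\mu_X^{EP}=\mu_\Sigma^{EP}$, and this is where the quantitative interpretation of the \BK{} quantum Gysin sequence enters. The strategy is to compare spectral invariants: for a suitable class of Hamiltonians on $\Sigma$, lift them (via the disk-bundle structure and the reduction procedure) to Hamiltonians on $X$, and show that the spectral invariant $c(e_X;\cdot)$ on $X$ attached to the field idempotent $e_X$ equals, up to a controlled error that washes out in the homogenization, the spectral invariant $c(e_\Sigma;\cdot)$ on $\Sigma$ attached to $e_\Sigma$. The quantum Gysin sequence of \BK{} relates $QH(X)$, $QH(\Sigma)$ and the Floer-theoretic data of the disk bundle; the ``quantitative'' upgrade is to track the action/valuation filtration through that sequence, so that the idempotents $e_X$ and $e_\Sigma$ are matched and the induced map on filtered Floer homology is an isometry on the relevant pieces. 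Passing to asymptotics, $\mu_X^{EP}=\lim_k c(e_X;\phi^k)/k$ reduces under $\Theta^\ast$ to $\lim_k c(e_\Sigma;\cdot)/k=\mu_\Sigma^{EP}$. I expect this filtered comparison — making the \BK{} Gysin sequence quantitative and checking that it is compatible with the idempotent decompositions on both sides, with explicit control of the shift coming from the monotonicity constants and the degree of the divisor $k$ — to be the main obstacle; the superpotential and superheaviness inputs, and Borman's smallness reduction, are then essentially black boxes to be invoked.
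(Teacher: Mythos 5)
Your high-level skeleton is right — non-degenerate critical points of the superpotentials produce field idempotents $e_\Sigma\in QH(\Sigma)$, $e_X\in QH(X)$ (the paper invokes Sanda's result giving $e_{M,K}=\frac{1}{\langle p_K,p_K\rangle_{\text{Muk}}}\mathcal{OC}^0(pt_K)$ rather than FOOO/BC directly, but the upshot is the same); superheaviness of $\wt L\subset X\setminus\Delta$ gives the smallness of $\Delta$; and the comparison of asymptotic spectral invariants kills the bounded error. However, the central step contains a genuine misconception that would derail the argument.

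You write that the \BK{} quantum Gysin sequence ``relates $QH(X)$, $QH(\Sigma)$ and the Floer-theoretic data of the disk bundle,'' and that the quantitative upgrade should ``match the idempotents $e_X$ and $e_\Sigma$'' so that the map on filtered Floer homology ``is an isometry on the relevant pieces.'' None of this is what the Gysin sequence says. The \BK{} sequence is an exact triangle of \emph{Lagrangian} quantum/Floer cohomologies
\begin{equation*}
\cdots \to QH^*(L,\rho)\xrightarrow{i}QH^*(\wt L,\wt\rho)\xrightarrow{p}QH^{*-1}(L,\rho)\xrightarrow{\delta}\cdots,
\end{equation*}
and the whole difficulty of Borman's question, as the paper stresses, is precisely that there is \emph{no} natural ring map between $QH(X)$ and $QH(\Sigma)$, so the idempotents $e_X$ and $e_\Sigma$ are never matched through any Gysin-type map. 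The actual mechanism is indirect: one passes from $c_{e_X}(\wt H)$ to the Lagrangian spectral invariant $\ell_{\wt L}(\wt H)$ via $\mathcal{CO}^0$ (using $\mathcal{CO}^0(e_X)=1_{\wt L}$), compares $\ell_{\wt L}(\wt H)$ with $\ell_L(H)$ through a \emph{Floer-theoretic} version of the Gysin sequence (the pearl version does not carry a Hamiltonian action filtration, so it must first be translated into Hamiltonian Lagrangian Floer theory), and then returns to $c_{e_\Sigma}(H)$ via $\mathcal{OC}^0$. Running the CO/OC estimates in both directions of the sequence traps the difference between $c_{e_X}(h\pi^*H)$ and a \emph{rescaled} $c_{e_\Sigma}(H)$ within a bounded window, which homogenization collapses.

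This also flags a second inaccuracy: the filtered Gysin sequence is not an isometry. The action filtration shifts multiplicatively by $h(r_0)=1-r_0^2=\frac{1}{2\kappa_L+1}$ (the monotone radius and the monotonicity constant of $L$), plus a small additive perturbation term that can be made to vanish. Consequently the conclusion is $\Theta^\ast\ovl c_{e_X}=\frac{1}{2\kappa_L+1}\,\ovl c_{e_\Sigma}$ — an equality only up to an explicit positive multiple, as made precise in Theorem~\ref{main thm} — rather than the literal equality $\Theta^\ast\mu_X^{EP}=\mu_\Sigma^{EP}$ your proposal targets. Finally, for the argument to even get off the ground one must show that the connecting map $\delta$ of the Gysin sequence vanishes (so that $i$ is injective and $p$ is surjective, yielding $i(1_L)=1_{\wt L}$ and a preimage of $1_L$ under $p$); this uses the hypothesis that $\wt\rho$ is a \emph{critical} point of $W_{\wt L}$ in an essential, computational way and is not something you can treat as a black box.
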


\begin{remark}\label{rmk main thm}
\begin{enumerate}
    \item Theorem \ref{main thm} states the same result more precisely but with some notions from the preliminary section (Section \ref{prelim}).

    \item We believe that the condition in {\thm} \ref{main thm intro} is optimal, i.e. whenever there is an {\EP} {\qmor}s on $X$ for which the skeleton $\Delta$ is small, there are monotone {\lag} tori $L$ and $\wt{L}$ as in Theorem \ref{main thm intro}, c.f. \cite[{\conjec} 1.1]{[Aur07]}. See Section \ref{Discussion} for a discussion on this.

    \item It is possible that the second condition in Theorem \ref{main thm intro} follows automatically from the first condition by the work of Diogo--Tonkonog--Vianna--Wu \cite{[DTVW]}. See Section \ref{Discussion} for further remarks.
\end{enumerate}
\end{remark}

In order to illustrate {\thm} \ref{main thm intro}, we list some examples to which Theorem \ref{main thm intro} applies. For further information and more detailed remarks concerning examples in Example \ref{example intro}, see Section \ref{examples}.

\begin{example}\label{example intro}
\begin{enumerate}
    \item $(X,\Sigma)=(\C P^n, \C P^{n-1})$: There are (unique) {\EP} {\qmor}s $\mu_{\C P^n} ^{\text{EP}}$, $\mu_{\C P^{n-1}} ^{\text{EP}}$ for $\C P^n$, $\C P^{n-1}$, respectively. The skeleton for the pair $(\C P^n, \C P^{n-1})$ is a point which satisfies the \textit{smallness condition}. Thus, Borman's reduction theorem is applicable and $\Theta^\ast \mu_{\C P^n;EP}$ is a {\qmor} for $\C P^{n-1}$, which we do not know at this point whether or not it is of {\EP}-type. The assumption of Theorem \ref{main thm intro} is satisfied for the pair of {\lag} tori $(\wt{L}=T^n _{\text{Clif}}$, $L=T^{n-1} _{\text{Clif}})$ and thus Theorem \ref{main thm intro} implies that the following holds:
    $$\Theta^\ast \mu_{\C P^n} ^{\text{EP}} = \mu_{\C P^{n-1}} ^{\text{EP}}  $$
    up to a constant factor.

    \item $(X,\Sigma)=(S^2 \times S^2, \Delta:=\{(x,x)\in S^2 \times S^2 \} \simeq S^2)$: Note that this is merely the $n=2$ case of the next example $(X,\Sigma)=(Q^n, Q^{n-1})$ but we decided that it is still instructive to treat this case differently. There are two {\EP} {\qmor}s for $S^2 \times S^2$ and only one for $\Delta$. We denote them by $\mu_{S^2 \times S^2,\pm} ^{\text{EP}}$ and $\mu_{S^2} ^{\text{EP}}$. The skeleton for the pair $(S^2 \times S^2, \Delta)$ is the anti-diagonal 
    $$ \ovl{ \Delta } :=\{(x,-x)\in S^2 \times S^2 \} $$
    which satisfies the \textit{smallness condition} for $\mu_{S^2 \times S^2,+} ^{\text{EP}}$ but not for $\mu_{S^2 \times S^2,-} ^{\text{EP}}$. Thus, Borman's reduction theorem is applicable only to $\mu_{S^2 \times S^2,+} ^{\text{EP}}$ 
    and $\Theta^\ast \mu_{S^2 \times S^2,+} ^{\text{EP}} $ is a {\qmor} for $\Delta \simeq S^2$, which we do not know at this point whether or not it is of {\EP}-type. The assumption of Theorem \ref{main thm intro} is satisfied for the pair of {\lag} tori $(\wt{L}=T^2 _{\text{Ch}}$, $L=S^1 _{\text{equator}} ) $ and thus Theorem \ref{main thm intro} implies that the following holds:
    $$\Theta^\ast \mu_{S^2 \times S^2,+} ^{\text{EP}} = \mu_{S^2} ^{\text{EP}} $$
    up to a constant factor.

    \item $(X,\Sigma)=(Q^n, Q^{n-1})$ where
    $$Q^n:=\{[z_0:z_1:\cdots :z_{n+1}] \in \C P^{n+1} | z_0 ^2 + z_1 ^2 + \cdots+ z_{n+1} ^2 =0  \} , $$
    $$ Q^{n-1}: = \{z\in Q^n: z_{n+1}=0 \}:$$
    There are two {\EP} {\qmor}s each for $Q^{n}$ and $Q^{n-1}$, which we denote by $\mu_{Q^{n},\pm} ^{\text{EP}}$ and $\mu_{Q^{n-1},\pm} ^{\text{EP}}$. The skeleton for the pair $(Q^n, Q^{n-1})$ is 
    $$ S^n  := \{z  \in Q^{n} | z_0 ^2 + \cdots+z_{n} ^2+ z_{n+1} ^2 =0 ,\ z_0,\cdots,z_n \in \R,\ z_{n+1} \in i \R \} $$
    which satisfies the \textit{smallness condition} for $\mu_{Q^{n},+} ^{\text{EP}}$ but not for $\mu_{Q^{n},-} ^{\text{EP}}$. Thus, Borman's reduction theorem is applicable only to $\mu_{Q^{n},+} ^{\text{EP}}$ 
    and $\Theta^\ast \mu_{Q^{n},+} ^{\text{EP}} $ is a {\qmor} for $Q^{n-1}$, which we do not know at this point whether or not it is of {\EP}-type. The assumption of Theorem \ref{main thm intro} is satisfied for the pair of {\lag} tori $(\wt{L}=T^n _{\text{GZ}}$, $L=T^{n-1} _{\text{GZ}})$ (see \cite[{\thm} B(1)]{Kaw23}) and thus Theorem \ref{main thm intro} implies that the following holds:
    $$\Theta^\ast \mu_{Q^{n},+} ^{\text{EP}} = \mu_{Q^{n-1},+} ^{\text{EP}} $$
    up to a constant factor.
    
    \item $(X,\Sigma)=(\C P^3, Q^{2})$: For $\C P^3$, there is a unique {\EP} {\qmor} $\mu_{\C P^3} ^{\text{EP}}$, and for $Q^{2}$, there are two {\EP} {\qmor}s, $\mu_{Q^{2},\pm} ^{\text{EP}}$. The skeleton for the pair $(\C P^3, Q^{2})$ is $ \R P^3 $ which satisfies the \textit{smallness condition} for $\mu_{\C P^3} ^{\text{EP}}$. Thus, Borman's reduction theorem is applicable to $\mu_{\C P^3} ^{\text{EP}}$ 
    and $\Theta^\ast \mu_{\C P^3} ^{\text{EP}} $ is a {\qmor} for $Q^{2}$, which we do not know at this point whether or not it is of {\EP}-type. The assumption of Theorem \ref{main thm intro} is satisfied for the pair of {\lag} tori $(\wt{L}=T^3 _{\text{Ch}}$, $L=T^{2} _{\text{Ch}})$ and thus Theorem \ref{main thm intro} implies that the following holds:
    $$\Theta^\ast \mu_{\C P^3} ^{\text{EP}} = \mu_{Q^{2},+} ^{\text{EP}} $$
    up to a constant factor.

\end{enumerate}
\end{example}

\subsection{Strategy}\label{Strategy}
We summarize our approach to prove Theorem \ref{main thm intro} and explain some ideas behind it.

As {\EP} {\qmor}s, which are reviewed in Section \ref{EP qmors and superheaviness}, are constructed by {\hamil} {\specinv}s, the main point of Question \ref{Borman's question} is to study the the relation between the {\hamil} {\specinv}s for the {\symp} {\mfd} $X$ and its Donaldson divisor $\Sigma$. The main difficulty to do that is that the Floer/quantum cohomologies of $X$ and $\Sigma$ are a priori not related, e.g. there is no natural homomorphism from one to the other.

However, {\BK} \cite{[BK13]} constructed a long exact sequence, which we call the \textit{quantum Gysin sequence}, that relates {\lag} Floer homologies of a {\lag} $L$ in the Donaldson divisor $\Sigma$ and its lifted {\lag} $\wt{L}$ in $X$:
\begin{equation}
\begin{tikzcd}
\arrow[r,""] & QH^{\ast} (L,\rho) \arrow[r,"i_{}"] & QH^{\ast} (\wt{L},\wt{\rho}) \arrow[r,"p"] & QH^{\ast-1} (L,\rho) \arrow[r,""] &.
\end{tikzcd}
\end{equation}

In Section \ref{quantum Gysin}, we will review the construction of the quantum Gysin sequence by enhancing it to a version with local systems. {\BK}'s method is based on {\BC}'s pearl theory and therefore, it is not suited to consider {\symp} invariants that are defined through filtration in Floer homology such as {\specinv}s. In order to overcome this issue, we translate their construction into the language of {\lag} Floer homology, namely we construct its Floer-theoretic counterpart which we call the \textit{Floer--Gysin sequence}. Finally, by taking the action filtration into consideration, we get the \textit{filtered Floer--Gysin sequence} (Section \ref{Filtered Floer--Gysin sequence}):
\begin{equation}\label{strategy seq}
\begin{tikzcd}
\arrow[r,""] & HF_{\Sigma} ^{\tau} (L,H) \arrow[r,"i_{Fl}"] & HF_{X} ^{h(r_0) \tau + \eps' } (\wt{L},\wt{H}) \arrow[r,"p_{Fl}"] & HF_{\Sigma} ^{\tau} (L,H) \arrow[r,""] & ,
\end{tikzcd}
\end{equation}
which is compatible with {\BK}'s quantum Gysin sequence. The reason why we do not work only with the Floer--Gysin sequence is that, some computations and arguments are easier to do with the pearl theory than with the Floer theory, c.f. Section \ref{The connecting map}.

Once we have established the filtered Floer--Gysin sequence \eqref{strategy seq}, we study the relation between the {\lag} {\specinv}s of $L$ and $\wt{L}$. We can also relate the {\lag} {\specinv}s of $L$ and $\wt{L}$ to the {\hamil} {\specinv}s of $\Sigma$ and $X$, respectively, via the closed-open/open-closed maps. The following diagram might be enlightening to understand the approach:

\begin{equation}\label{filtered diagram}
\begin{tikzcd}
  \arrow[r] & HF_{\Sigma} ^{\tau}(L, H)  \arrow[r] & HF_{X} ^{h(r_0)\cdot \tau + \eps' } (\wt{L},  \wt{H}) \arrow[shift right, swap]{d}{\mathcal{OC}^{0}} \arrow[r] & HF_{\Sigma} ^{\tau}(L, H)  \arrow[d,"\mathcal{OC}^{0}"] \arrow[r] &  \ \\
      & HF_{\Sigma} ^{\tau}( H) \arrow[u,"\mathcal{CO}^{0}"] & HF_{X} ^{h(r_0)\cdot \tau + \eps'}( \wt{H}) \arrow[shift right, swap]{u}{\mathcal{CO}^{0}}  & HF_{\Sigma} ^{\tau}( H)   &  .
\end{tikzcd}
\end{equation}

Through diagram \eqref{filtered diagram}, we can study the relation between the {\hamil} {\specinv}s of $\Sigma$ and $X$. All this is done in Section \ref{Completing the proof}.

Examples to which the main result applies are discussed in Example \ref{example intro} and Section \ref{examples} where the latter contains more details.

\subsection{Acknowledgements}
I thank Paul Biran and Octav Cornea for useful discussions, and Kaoru Ono and Leonid Polterovich for their interesting comments and feedback at the Mittag--Leffler institute in 2022. This project was conducted at Universit\'e de Montr\'eal while the author was a CRM-postdoctoral fellow at Centre de Recherches Math\'ematiques (CRM). The author thanks CRM for their hospitality.

\section{Preliminaries}\label{prelim}
We start this section by precising some conventions that are sometimes implicit in the paper. We say that a {\symp} {\mfd} $(X,\omega)$ is an \textit{integral} {\symp} {\mfd} if the {\symp} form admits an integral lift, i.e. $[\omega] \in H^2 (X ; \R)$ can be seen as an element of the integral cohomology $ H^2 (X ; \Z)$.

Unless otherwise mentioned, the {\symp} {\mfd}s and the {\lag} {\smfd}s that we consider in the results/proofs are assumed to be monotone; a {\symp} {\mfd} $(X,\omega)$ and a {\lag} {\smfd} $L$ are monotone when there exist $\tau , \kappa >0$ such that 
$$ \omega|_{\pi_2 (X)} = \tau \cdot \mu_{CZ}|_{\pi_2 (X)}  ,$$
$$ \omega|_{\pi_2 (X,L)} = \kappa \cdot  \mu_{L}|_{\pi_2 (X,L)}  $$
where $\mu_{CZ},\mu_{L}$ denote the Conley--Zehnder and Maslov indices, respectively.

\subsection{Spectral invariant theory}\label{prelim specinv}
 
It is well-known that on a closed {\symp} {\mfd} $(X,\omega)$\footnote{Although the results in this section hold for general closed {\symp} {\mfd}s, we will only be using the monotone case due to some Floer-theoretic constraints that will appear later, which is not from the {\specinv} theory.}, for a non-{\degen} {\hamil} $H:=\{H_t:X \to \R \}_{t\in [0,1]}$ and a choice of a nice {\coeff} field $\Lambda^{\downarrow}$, such as the downward Laurent {\coeff}s $\Lambda_{\text{Lau}} ^{\downarrow} $ for the monotone case
$$\Lambda_{\text{Lau}} ^{\downarrow} :=\{\sum_{k\leq k_0 } b_k t^k : k_0\in \mathbb{Z},b_k \in \mathbb{C} \}  ,   $$
or the downward Novikov {\coeff}s $\Lambda_{\text{Nov}} ^{\downarrow}$ for the general case
$$\Lambda_{\text{Nov}} ^{\downarrow}:=\{\sum_{j=1} ^{\infty} a_j T^{\lambda_j} :a_j \in \mathbb{C}, \lambda _j  \in \mathbb{R},\lim_{j\to -\infty} \lambda_j =+\infty \} ,$$
one can construct a filtered Floer homology group $\{HF^\tau(H):=HF^\tau(H;\Lambda^{\downarrow})\}_{\tau \in \R}$. Note that in this paper, we only use Novikov {\coeff}s, i.e. 
$$ \Lambda ^{\downarrow}=\Lambda_{\text{Nov}} ^{\downarrow} .$$
For two numbers $\tau<\tau'$, the groups $HF^\tau(H;\Lambda^{\downarrow})$ and $HF^{\tau'}(H;\Lambda^{\downarrow})$ are related by a map induced by the inclusion map on the chain level:
$$i_{\tau,\tau'}: HF^\tau(H;\Lambda^{\downarrow}) \longrightarrow HF^{\tau'}(H;\Lambda^{\downarrow}) ,$$
and especially we have
$$i_{\tau}: HF^\tau(H;\Lambda^{\downarrow}) \longrightarrow HF^{}(H;\Lambda^{\downarrow}) ,$$
where $HF^{}(H;\Lambda^{\downarrow})$ is the Floer homology group. There is a canonical ring isomorphism called the Piunikhin--Salamon--Schwarz (PSS)-map \cite{[PSS96]}, \cite{[MS04]}
$$PSS_{H; \Lambda_{} } : QH(X,\omega ;\Lambda_{}) \xrightarrow{\sim} HF(H;\Lambda_{} ^{\downarrow}) ,$$
where $QH(X,\omega;\Lambda_{})$ denotes the quantum cohomology ring of $(X,\omega)$ with $\Lambda$-{\coeff}s, i.e.
$$ QH(X,\omega;\Lambda_{}) := H^\ast (X;\C) \otimes \Lambda_{} .$$
Here, $\Lambda_{}$ is the Novikov {\coeff}s (the universal Novikov field) $\Lambda_{\text{Nov}}$
$$\Lambda_{\text{Nov}}:=\{\sum_{j=1} ^{\infty} a_j T^{\lambda_j} :a_j \in \mathbb{C}, \lambda _j  \in \mathbb{R},\lim_{j\to +\infty} \lambda_j =+\infty \} .$$ 
From now on, we will always take the the universal Novikov field to set-up the {\QH}, so we will often abbreviate it by $QH(X,\omega)$, i.e. 
$$QH(X,\omega) := QH(X,\omega ;\Lambda_{\text{Nov}}) .$$

The ring structure of $QH(X,\omega)$ is given by the quantum product, which is a quantum deformation of the intersection product
$$- \ast - :QH(X,\omega) \times QH(X,\omega) \to QH(X,\omega) . $$

The {\specinv}s, which were introduced by Schwarz \cite{[Sch00]} and developed by Oh \cite{[Oh05]} following the idea of Viterbo \cite{[Vit92]}, are real numbers $\{c (H,a ) \in \R\}$ associated to a pair of a {\hamil} $H$ and a class $a \in QH(X,\omega )$ in the following way:
$$c(H,a ) := \inf \{\tau \in \R : PSS_{H; \Lambda_{} } (a) \in \Im (i_{\tau})\} .$$

\begin{remark}
Although the Floer homology is only defined for a non-{\degen} {\hamil} $H$, the {\specinv}s can be defined for any {\hamil} by using the following \textit{Hofer continuity property}:
\begin{equation}\label{Hofer conti abs}
   \int_{0} ^1 \min_{x\in X} \left(H_t(x) - G_t(x) \right) dt \leq  c(H,a )-c(G,a ) \leq \int_{0} ^1 \max_{x\in X} \left(H_t(x) - G_t(x) \right) dt
\end{equation}
for any $a \in QH(X,\omega),\ H$ and $G$.
\end{remark}

Spectral invariants satisfy the \textit{triangle inequality}: for {\hamil}s $H,G$ and $a,b \in QH(X,\omega)$, we have
\begin{equation}\label{triangle ineq}
    c(H,a) + c(G, b) \geq c(H \# G, a \ast b )
\end{equation}
where $H \# G (t,x):=H_t(x) + G_t( \left( \phi^t _{H} \right)^{-1} (x)  )$ and it generates the path $t \mapsto \phi^t _H \circ \phi^t _G$ in $\Ham(X,\omega)$.

When we take the zero function as the {\hamil}, we have the \textit{valuation property}: for any $a\in  QH(X;\Lambda) \backslash \{0\},$
\begin{equation}\label{valuation}
   c (0,a)=\nu(a) 
\end{equation}
where $0$ is the zero-function and $\nu : QH(X;\Lambda) \to \R$ is the natural valuation function
\begin{equation}
    \begin{gathered}
        \nu : QH(X;\Lambda) \to \R \\
        \nu(a):= \nu(\sum_{j=1} ^{\infty} a_j T^{\lambda_j}):=\min\{\lambda_j: a_j \neq 0\}.
    \end{gathered}
\end{equation}

Note that from the triangle inequality \eqref{triangle ineq} and the valuation property \eqref{valuation}, for any $a\in  QH(X;\Lambda) \backslash \{0\},\ \lambda \in \Lambda$ and a {\hamil} $H$, we have
\begin{equation}\label{novikov shift}
    c(H, \lambda \cdot a) =  c(H,   a) + \nu (\lambda) .
\end{equation}

Analogous invariants for {\lag} Floer homology, namely the {\lag} {\specinv}s, were defined in \cite{[Lec08],[LZ18],[FOOO19],[PS]}. We summarize some basic properties of {\lag} {\specinv}s from these references. Once again, given a pair of a (non-{\degen}) {\hamil} $H$ and a class $a \in HF(L)$\footnote{The {\lag} Floer homology for $L$ without a {\hamil} term $HF(L)$ stands for the {\lag} quantum cohomology \cite{[BC09]}, which is also written as $QH(L)$ in the literature.}, we define 
$$\ell (H,\alpha ) := \inf \{\tau \in \R : PSS_{L,H } (\alpha) \in \Im (i_{\tau} ^L)\} $$
where 
$$PSS_{L,H } : HF(L) \rightarrow HF(L,H),$$
$$i_{\tau} ^L: HF^\tau (L,H) \to HF^\tau (L,H)  .$$
In this paper, we pay particular attention to the case where $\alpha= 1_L$. In this case, we simply denote 
$$\ell_L (H): = \ell (H,1_L ) .$$

Analogously to the {\hamil} case (c.f. \eqref{Hofer conti abs}), we have the \textit{{\lag} control property} for $\ell_L$:
\begin{equation}\label{Lag control prop}
   \int_{0} ^1 \min_{x\in L} H_t(x) dt \leq \ell_L (H)  \leq \int_{0} ^1 \max_{x\in L} H_t(x) dt
\end{equation}

Properties analogous to \eqref{triangle ineq}, \eqref{valuation}, \eqref{novikov shift} also hold for {\lag} {\specinv}s.

Note that both {\hamil} and {\lag} {\specinv}s satisfy the homotopy invariance, i.e. if two normalized {\hamil}s $H$ and $G$ generate homotopic {\hamil} paths $t \mapsto \phi_H ^t$ and $t \mapsto \phi_G ^t$ in $\Ham(X,\omega)$, then 
$$c(H, -) =c(G, -) . $$
Thus, one can define {\specinv}s on $\wt{\Ham} (X,\omega)$:
\begin{equation}
\begin{gathered}
 c: \wt{\Ham} (X,\omega) \times QH(X,\omega) \rightarrow \R \\
 c(\wt{\phi},a):= c(H ,a)
\end{gathered}
\end{equation}
where the path $t \mapsto \phi_H ^t$ represents the class of paths $\wt{\phi}$. Similarly, one can define
$$\ell : \wt{\Ham} (X,\omega) \times HF(L) \rightarrow \R . $$

{\hamil} and {\lag} Floer homologies are related by the closed-open and open-closed maps
\begin{equation}
\begin{gathered}
\mathcal{CO}^0 : QH(X,\omega) \to HF(L),\\
\mathcal{OC}^0 :  HF(L) \to QH(X,\omega) ,
\end{gathered}
\end{equation}
 which are defined by counting certain holomorphic curves. The closed-open map $\mathcal{CO}^0$ is a ring homomorphism and the open-closed map $\mathcal{OC}^0$ defines a module action. As they are defined by counting certain holomorphic curves, which have positive $\omega$-energy, they have the following effect on {\specinv}s.
 
 \begin{prop}[{\cite{[BC09],[LZ18],[FOOO19]}}]\label{co oc spec inv}
 Let $H$ be any {\hamil}. 
 \begin{enumerate}
 \item For any $a \in QH(X,\omega)$, we have
 $$c(H,a) \geq \ell (H , \mathcal{CO}^0 (a) ) .$$
\item For any $\alpha \in HF(L)$, we have
 $$\ell (H , \alpha ) \geq  c(H,\mathcal{OC}^0 (\alpha)) .$$
 \end{enumerate}
 \end{prop}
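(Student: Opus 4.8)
The plan is to establish both inequalities by the standard argument that the closed--open and open--closed maps are induced by chain maps that strictly increase (or preserve) the action filtration, because the moduli spaces defining them consist of curves with non-negative symplectic energy. Concretely, fix a non-degenerate Hamiltonian $H$ and an $\omega$-compatible almost complex structure for which all the relevant moduli spaces (PSS, closed--open, open--closed) are regular; by the usual density and homotopy arguments the resulting spectral numbers are independent of these choices. One then checks that the chain-level closed--open map $\mathcal{CO}^0$ sends $CF^{\tau}(H)$ into $CF^{\tau}(L,H)$, i.e. it does not decrease action, and likewise that $\mathcal{OC}^0$ sends $CF^{\tau}(L,H)$ into $CF^{\tau}(H)$; this is exactly the energy estimate coming from the Gromov--Floer compactness setup, where the topological energy of each contributing curve is bounded below by its geometric energy, which is non-negative.

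First I would recall the definitions: $c(H,a)=\inf\{\tau: \mathrm{PSS}_{H}(a)\in\Im(i_\tau)\}$ and $\ell(H,\alpha)=\inf\{\tau:\mathrm{PSS}_{L,H}(\alpha)\in\Im(i_\tau^L)\}$. For part (1), let $\tau> c(H,a)$, so that $\mathrm{PSS}_H(a)$ is represented by a cycle $x\in CF^{\tau}(H)$. Applying the chain-level map $\mathcal{CO}^0$ and using its compatibility with PSS (i.e. $\mathcal{CO}^0\circ\mathrm{PSS}_H=\mathrm{PSS}_{L,H}\circ\mathcal{CO}^0$ on homology, which holds because both sides count the same type of degenerating configurations) we obtain a representative of $\mathrm{PSS}_{L,H}(\mathcal{CO}^0(a))$ lying in $CF^{\tau}(L,H)$, hence $\ell(H,\mathcal{CO}^0(a))\le\tau$. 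Taking the infimum over such $\tau$ gives $\ell(H,\mathcal{CO}^0(a))\le c(H,a)$. Part (2) is entirely symmetric, using instead the identity $\mathcal{OC}^0\circ\mathrm{PSS}_{L,H}=\mathrm{PSS}_H\circ\mathcal{OC}^0$ and the fact that $\mathcal{OC}^0$ does not decrease action; if $\tau>\ell(H,\alpha)$ then $\mathrm{PSS}_{L,H}(\alpha)$ has a representative in $CF^{\tau}(L,H)$, whose image under $\mathcal{OC}^0$ represents $\mathrm{PSS}_H(\mathcal{OC}^0(\alpha))$ and lies in $CF^{\tau}(H)$, so $c(H,\mathcal{OC}^0(\alpha))\le\tau$.

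The main obstacle is the careful bookkeeping of the action filtration for the moduli problems defining $\mathcal{CO}^0$ and $\mathcal{OC}^0$: one must verify that for a curve contributing to $\mathcal{CO}^0$ with asymptotic orbit $\gamma$ (input, weighted by its Hamiltonian action) and asymptotic Lagrangian intersection point $p$ (output, weighted by its Lagrangian action), the energy identity reads $E=\mathcal{A}_H(\gamma)-\mathcal{A}_{L,H}(p)$ up to the $\omega$-area of the capping, and that $E\ge 0$ forces the output action to be $\le$ the input action; and symmetrically for $\mathcal{OC}^0$. This is standard but requires fixing sign/orientation and capping conventions consistently, and taking care that the Novikov variable $T$ is weighted so that the quantum-cohomology valuation matches the geometric energy. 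Once these conventions are pinned down, the inequalities follow formally. Since the statement is attributed to the existing literature, I would in practice just cite \cite{[BC09],[LZ18],[FOOO19]} for the filtered chain-level statements and present the two-line filtration argument above.
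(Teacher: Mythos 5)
The paper does not actually prove this proposition; it is cited from the literature, with only the one-sentence motivation preceding it (``As they are defined by counting certain holomorphic curves, which have positive $\omega$-energy, they have the following effect on spectral invariants''). Your sketch fleshes out exactly that sentence via the standard filtration argument: a Floer-level closed-open (resp.\ open-closed) chain map is action-nonincreasing because the defining moduli spaces have non-negative energy, it intertwines the two PSS maps, and then $c$ and $\ell$ compare by chasing a representative from $CF^\tau$ on one side to $CF^\tau$ on the other and taking infima. This is correct and consistent with the paper's intent, and you are right that the careful part is the chain-level energy/action identity and the PSS-compatibility; since the paper itself only cites these, your instinct to defer the filtered chain-level details to \cite{[BC09],[LZ18],[FOOO19]} matches what the paper does.
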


\subsection{{\EP} {\qmor}s and (super)heaviness}\label{EP qmors and superheaviness}

Based on {\specinv}s, {\EP} built two theories, namely the theory of (Calabi) {\qmor}s and the theory of (super)heaviness, which we briefly review in this section. 

\textit{Quasimorphisms.} {\EP} constructed a special map on $\wt{\Ham}(X,\omega)$ called the {\qmor} for under some assumptions. Recall that a {\qmor} $\mu$ on a group $G$ is a map to the real line $\R$ that satisfies the following two properties:
\begin{enumerate}
    \item There exists a constant $C>0$ such that 
    $$|\mu(f \cdot g) -\mu(f)-\mu(g)|<C $$
    for any $f,g \in G$.
    
    \item For any $k \in \Z$ and $f \in G$, we have
    $$\mu(f^k)=k \cdot \mu(f) .$$
\end{enumerate} 

The following is {\EP}'s construction of {\qmor}s on $\wt{\Ham}(X,\omega)$.

\begin{theo}[{\cite{[EP03]}}]\label{EP qmor}
Suppose $QH(X,\omega ;\Lambda)$ has a field factor, i.e. 
$$ QH(X,\omega) = Q \oplus A $$
where $Q$ is a field and $A$ is some algebra. Decompose the unit $1_{X}$ of $QH(X,\omega )$ {\wrt} this split, i.e. 
$$1_{X}=e + a .$$
Then, the {\asympt} {\specinv} of $\wt{\phi}$ {\wrt} $e$ defines a {\qmor}, i.e.
\begin{equation}\label{EP qmor def}
    \begin{gathered}
        \ovl{c}_e=\zeta_{e}:\wt{\Ham}(X,\omega) \longrightarrow \R \\
         \ovl{c}_e ( \wt{\phi}) = \zeta_{e} ( \wt{\phi}) := \lim_{k \to +\infty} \frac{c (\wt{\phi} ^{ k},e )}{k} =\lim_{k \to +\infty} \frac{c (H^{\# k},e )}{k}
    \end{gathered}
\end{equation}
where $H$ is any mean-normalized {\hamil} such that the path $t \mapsto \phi_H ^t $ represents the class $\wt{\phi}$ in $\wt{\Ham}(X,\omega)$.
\end{theo}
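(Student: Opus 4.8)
The plan is to follow the classical Entov--Polterovich argument \cite{[EP03]}, adapting their reasoning about partial symplectic quasi-states to the spectral-invariant setup reviewed above. First I would fix a mean-normalized Hamiltonian $H$ representing $\wt{\phi}$, and observe that the function $k \mapsto c(H^{\#k},e)$ is subadditive: using the triangle inequality \eqref{triangle ineq} together with $e \ast e = e$ (since $e$ is the unit of the field factor $Q$, hence idempotent in $QH(X,\omega)$), one gets $c(H^{\# (k+l)}, e) \le c(H^{\#k},e) + c(H^{\#l},e)$. By Fekete's subadditivity lemma the limit in \eqref{EP qmor def} exists in $[-\infty,\infty)$, and a matching lower bound coming from the Hofer/Lagrangian-type continuity estimate \eqref{Hofer conti abs} (comparing $H^{\#k}$ with the zero Hamiltonian and using that $H$ is mean-normalized) shows $\zeta_e(\wt{\phi})$ is finite and that the whole construction is invariant under homotopy of paths, so it descends to $\wt{\Ham}(X,\omega)$. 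The homogeneity property $\zeta_e(\wt{\phi}^m) = m\,\zeta_e(\wt{\phi})$ for $m \ge 1$ is then immediate from the definition of the limit (replace $k$ by $mk$); extending to $m \in \Z$ follows once we know $\zeta_e$ is a homomorphism up to bounded error, which gives $\zeta_e(\wt{\phi}^{-1}) = -\zeta_e(\wt{\phi})$.

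The heart of the proof is the quasi-additivity bound. I would show there is a constant $C>0$, independent of $\wt{\phi},\wt{\psi} \in \wt{\Ham}(X,\omega)$, with
\begin{equation}
|\zeta_e(\wt{\phi}\wt{\psi}) - \zeta_e(\wt{\phi}) - \zeta_e(\wt{\psi})| \le C.
\end{equation}
The upper bound direction is easy from the triangle inequality as above. For the lower bound one needs the key algebraic input that $Q$ is a \emph{field}: in a field the spectral pseudonorm (the valuation $\nu$ of a product of nonzero elements) is controlled, and concretely one uses that for $a \in Q \setminus \{0\}$ there is an inverse $a^{-1} \in Q$, so writing $e = a \ast a^{-1}$ and applying \eqref{triangle ineq} to $H^{\#k}$, its inverse path, and $a, a^{-1}$ yields a two-sided estimate $c(H^{\#k},a) + c(\ovl{H}^{\#k}, a^{-1}) \ge c(0,e) = \nu(e)$, while also $c(H^{\#k}, e) \le c(H^{\#k},a) + \nu(a^{-1}) + (\text{bounded})$ etc. Chaining these with the cocycle relation for $\#$ and dividing by $k$, the error terms — which are $O(1)$ in $k$ — wash out in the limit for the homogeneity statements but survive as the constant $C$ for the quasimorphism statement. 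This is precisely Entov--Polterovich's observation that the idempotent $e$ being a unit of a \emph{field} factor (not merely an algebra factor) is what forces the asymptotic spectral invariant to be a genuine quasimorphism rather than just a partial quasi-state.

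The main obstacle, and the step I would spend the most care on, is making the lower bound in the quasi-additivity estimate uniform in $\wt{\phi},\wt{\psi}$: one must extract from the field structure of $Q$ a single constant controlling $\nu(a) + \nu(a^{-1})$-type quantities and the discrepancy between $c(-,e)$ and $c(-,a)$ across all Hamiltonians, and combine it with the subadditivity of both $k \mapsto c(H^{\#k},e)$ and $k \mapsto c(\ovl H^{\#k},e)$. Once the uniform bound \eqref{EP qmor def} is in hand, conjugation-invariance of $\zeta_e$ follows from conjugation-invariance of spectral invariants (the spectrum of $\psi H \psi^{-1}$ equals that of $H$), which together with homogeneity on $\Z$ completes the verification that $\ovl c_e = \zeta_e$ is a homogeneous quasimorphism on $\wt{\Ham}(X,\omega)$.
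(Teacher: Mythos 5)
The paper does not prove this theorem (it is recalled from Entov--Polterovich \cite{[EP03]}), so there is no internal proof to compare against; I am assessing your sketch against the known argument. Your outline correctly identifies several structural points: subadditivity of $k \mapsto c(H^{\# k},e)$ via the triangle inequality \eqref{triangle ineq} and the idempotency $e \ast e = e$; existence of the homogenized limit by Fekete; homotopy invariance and descent to $\wt{\Ham}(X,\omega)$; and the reduction of the quasimorphism bound to a uniform estimate on $c(\wt{\phi},e) + c(\wt{\phi}^{-1},e)$.

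The genuine gap is in how you try to establish that uniform estimate. What is actually needed is an \emph{upper} bound $c(\wt{\phi},e) + c(\wt{\phi}^{-1},e) \leq C$ (applying subadditivity to $\wt{\phi}=(\wt{\phi}\wt{\psi})\wt{\psi}^{-1}$ shows the quasi-additivity defect is controlled by exactly this quantity). But every estimate you chain from the triangle inequality \eqref{triangle ineq}, the valuation shift \eqref{novikov shift}, and the identity $e = a \ast a^{-1}$ points in the \emph{lower}-bound direction: for instance $c(H^{\# k},a) + c(\overline{H}^{\# k},a^{-1}) \geq c(0,e) = \nu(e)$, and combining $c(\wt{\psi},e) \leq c(\wt{\psi},a) + \nu(a^{-1})$ with $c(\wt{\psi}^{-1},e) \leq c(\wt{\psi}^{-1},a^{-1}) + \nu(a)$ and the loop inequality again only re-derives the lower bound $c(\wt{\psi},e) + c(\wt{\psi}^{-1},e) \geq \nu(e)$. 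No choice of $a \in Q$ closes the argument this way. The missing ingredient is Floer-theoretic Poincar\'e duality for spectral invariants, which supplies an actual \emph{equality} of the form $c(\wt{\phi},a) = -\inf\{\,c(\wt{\phi}^{-1},b) : \Pi(a,b) \neq 0\,\}$ for the nondegenerate quantum intersection pairing $\Pi$; this, together with nondegeneracy of $\Pi$ restricted to the direct summand $Q$ and the fact that $Q$ is a finite extension of the Novikov field (so the relevant valuations are uniformly controlled), is what yields the two-sided bound on $c(\wt{\phi},e)+c(\wt{\phi}^{-1},e)$. Your proposal invokes only the triangle inequality and the algebra of $Q$, and that machinery alone cannot produce the upper bound.
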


\begin{remark}
    We refer to {\qmor}s on $\wt{\Ham}(X,\omega)$ defined as \eqref{EP qmor def} as {\EP} {\qmor}s. Notations $\ovl{c}_e,\zeta_{e}$ are both used to denote an {\EP} {\qmor}.
\end{remark}

\begin{remark}
By slight abuse of notation, we will also see $\zeta_{e}$ as a function on the set of time-independent {\hamil}s:
\begin{equation}
    \begin{gathered}
        \zeta_{e}:C^{\infty}(X) \longrightarrow \R \\
         \zeta_{e} ( H) := \lim_{k \to +\infty} \frac{c (H^{\# k},e )}{k}.
    \end{gathered}
\end{equation}
\end{remark}

\begin{remark}
  The {\lag} {\specinv}s do not appear in the result of {\EP}, but we define the \textit{asymptotic {\lag} {\specinv}s}, as we will use them later on in the proofs. 

\begin{equation}\label{asymp lag spec inv}
    \begin{gathered}
        \ovl{\ell}_L :\wt{\Ham}(X,\omega) \longrightarrow \R \\
         \ovl{\ell}_L := \lim_{k \to +\infty} \frac{ \ell ( \wt{\phi} ^{ k},1_L )}{k}
    \end{gathered}
\end{equation}  
\end{remark}

\textit{Superheaviness.} {\EP} introduced a notion of {\symp} rigidity for subsets in $(X,\omega)$ called (super)heaviness.

\begin{definition}[{\cite{[EP09]},\cite{[EP06]}}]\label{def of heavy}
Take an idempotent $e \in QH(X,\omega )$ and denote the {\asympt} {\specinv} {\wrt} $e$ by $\zeta_{e}$. A subset $S$ of $(X,\omega)$ is called
\begin{enumerate}
    \item $e$-heavy if for any time-independent {\hamil} $H:X \to \R$, we have
$$  \inf_{x\in S} H(x)  \leq \zeta_{e} ( H) , $$

\item $e$-{\suphv} if for any time-independent {\hamil} $H:X \to \R$, we have
$$ \zeta_{e} ( H) \leq \sup_{x\in S} H(x) . $$

\end{enumerate}
\end{definition} 

\begin{remark}
    Note that if a set $S$ is $e$-{\suphv}, then it is also $e$-heavy. 
\end{remark}

The following is an easy corollary of the definition of {\suphvness} which is useful.

\begin{prop}[{\cite{[EP09]}}]\label{suphv constant}
Assume the same condition on $QH(X,\omega )$ as in Theorem \ref{EP qmor}. Let $S$ be a subset of $X $ that is $e$-{\suphv}. For a time-independent {\hamil} $H:X \to \R$ whose restriction to $S$ is constant, i.e. $H|_{S}\equiv r,\ r\in \R$, we have
$$\zeta_e (H)=r .$$
In particular, two disjoint subsets of $(X,\omega)$ cannot be both $e$-{\suphv}.
\end{prop}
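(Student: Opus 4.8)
\textbf{Proof plan for Proposition \ref{suphv constant}.}

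The plan is to derive the equality $\zeta_e(H)=r$ by squeezing $\zeta_e(H)$ between two quantities that both equal $r$, using $e$-{\suphv}ness of $S$ together with the {\suphv}ness/heaviness inequalities of Definition \ref{def of heavy}. First I would apply the {\suphv} inequality directly to $H$: since $H|_S\equiv r$, we have $\sup_{x\in S}H(x)=r$, so $\zeta_e(H)\leq r$. For the reverse inequality, I would observe that a set which is $e$-{\suphv} is in particular $e$-heavy (as noted in the remark following Definition \ref{def of heavy}), and the heaviness inequality applied to $H$ gives $r=\inf_{x\in S}H(x)\leq\zeta_e(H)$. Combining the two yields $\zeta_e(H)=r$.

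For the ``in particular'' clause, suppose $S_1$ and $S_2$ are disjoint and both $e$-{\suphv}. The idea is to build a single time-independent {\hamil} that is forced to take two different asymptotic spectral values. Concretely, since $S_1$ and $S_2$ are disjoint compact subsets (or at least can be separated), choose a time-independent $H:X\to\R$ with $H|_{S_1}\equiv 0$ and $H|_{S_2}\equiv 1$; such a function exists by a standard bump-function construction once $S_1,S_2$ are contained in disjoint open sets. Applying the first part of the proposition with $S=S_1$ gives $\zeta_e(H)=0$, while applying it with $S=S_2$ gives $\zeta_e(H)=1$, a contradiction. Hence no two disjoint subsets can both be $e$-{\suphv}.

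I expect the only genuine subtlety to be the existence of the separating {\hamil} in the disjointness statement: if one wants the argument to work for arbitrary (not necessarily closed) disjoint subsets, one should note that $e$-{\suphv}ness of $S$ is equivalent to $e$-{\suphv}ness of its closure $\overline{S}$ (because $\sup_{x\in S}H=\sup_{x\in\overline{S}}H$ for continuous $H$), so it suffices to separate the closures; if $\overline{S_1}\cap\overline{S_2}\neq\emptyset$ one can still reach a contradiction by a limiting argument using the Hofer continuity property \eqref{Hofer conti abs} of $\zeta_e$, but in the generality needed here it is cleanest simply to assume the two sets are closed and disjoint, hence contained in disjoint open neighborhoods, where the bump-function construction is immediate. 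Everything else is a one-line application of the inequalities already recorded in Definition \ref{def of heavy}.
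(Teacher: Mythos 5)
Your proposal is correct and follows essentially the same route as the paper: the first part is the two-sided squeeze coming from superheaviness (upper bound) and heaviness (lower bound, via superheavy $\Rightarrow$ heavy), and the disjointness statement is the same bump-function contradiction, which you package as two applications of the first part rather than writing the chain $1=\inf_{S_2}H\leq\zeta_e(H)\leq\sup_{S_1}H=0$ directly. Your extra remark about needing the sets (or their closures) to be separable by a smooth function is a genuine point of care that the paper glosses over, but it does not change the argument.
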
 

\begin{proof}
    The first part is an immediate consequence of the definition of (super)heaviness. As for the second part, suppose we have two disjoint sets $A,B$ in $(X,\omega)$ that are both $e$-{\suphv}. Consider a {\hamil} $H$ that is
    $$H|_A=0,\ H|_B=1.$$
    Then, by {\suphvness}, we have
    $$1= \inf_{x\in B} H(x) \leq \zeta_{e}(H) \leq \sup_{x \in A} H(x) =0 ,$$
    which is a contradiction. 
\end{proof}

We end this section by giving a criterion for heaviness, proved by {\FOOO} (there are earlier results with less generality, c.f. \cite{Alb05}) using the closed-open map
$$\mathcal{CO}^0 : QH (X,\omega) \to HF  (L)  . $$

\begin{theo}[{\cite[Theorem 1.6]{[FOOO19]}}]\label{CO map heavy}

Assume $HF ( L ) \neq 0$. If 
$$\mathcal{CO}^0 (e)\neq 0$$
for an idempotent $e\in QH (X,\omega)$, then $L$ is $e$-heavy.
\end{theo}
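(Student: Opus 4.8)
\textbf{Proof proposal for Theorem \ref{CO map heavy}.}

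The plan is to deduce heaviness of $L$ directly from the definition via the interplay between the closed--open map $\mathcal{CO}^0$ and the two triangle/module inequalities recorded in Proposition \ref{co oc spec inv}. First I would fix a time-independent {\hamil} $H\co X\to\R$ and set $r:=\inf_{x\in L}H(x)$; the goal is to show $r\leq \zeta_e(H)$. The key observation is that $\mathcal{CO}^0$ is a ring homomorphism sending $e$ to a nonzero idempotent $\mathcal{CO}^0(e)\in HF(L)$ (nonzero by hypothesis, idempotent since $e$ is), and hence $\mathcal{CO}^0(e)$ behaves as a unit-like element on the summand of $HF(L)$ it generates. I would combine this with the first inequality of Proposition \ref{co oc spec inv}, namely $c(H,e)\geq \ell(H,\mathcal{CO}^0(e))$, applied to the iterates $H^{\#k}$.

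The main steps, in order: (1) Record that $\mathcal{CO}^0$ commutes with quantum/PSS-type products, so $\mathcal{CO}^0(e^{\ast k})=\mathcal{CO}^0(e)^{\ast k}=\mathcal{CO}^0(e)$ for all $k$, and apply Proposition \ref{co oc spec inv}(1) to get $c(H^{\#k},e)\geq \ell(H^{\#k},\mathcal{CO}^0(e))$. (2) Establish a lower bound $\ell(H^{\#k},\mathcal{CO}^0(e))\geq \ell(H^{\#k},1_L) + O(1)$, or more directly a lower bound in terms of $r$; here one uses that $\mathcal{CO}^0(e)$ is a nonzero idempotent, so $1_L\cdot \mathcal{CO}^0(e)$ (module action of $QH(X)$, or the ring structure of $HF(L)$) is a nonzero multiple of $\mathcal{CO}^0(e)$ up to a Novikov unit, together with the valuation/triangle properties of {\lag} {\specinv}s and the {\lag} control property \eqref{Lag control prop}. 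The cleanest route is: by the {\lag} control property applied to iterates, $\ell_L(H^{\#k})\geq \int_0^1\min_{x\in L}(H^{\#k})_t\,dt$, but since $H$ is autonomous $H^{\#k}=kH$, giving $\ell_L(kH)\geq k r$. (3) The subtle point is transferring this from $1_L$ to $\mathcal{CO}^0(e)$: since $\mathcal{CO}^0(e)$ is idempotent and nonzero, $\ell(H^{\#k},\mathcal{CO}^0(e)) = \ell(H^{\#k},\mathcal{CO}^0(e)\ast\mathcal{CO}^0(e))$, and one bounds this below by combining a triangle-type inequality with the valuation of $\mathcal{CO}^0(e)$ as an element of $HF(L)\otimes\Lambda$, yielding $\ell(H^{\#k},\mathcal{CO}^0(e))\geq \ell(H^{\#k},1_L)-\nu(\text{stuff})$ where the correction is independent of $k$. (4) Divide by $k$ and let $k\to\infty$: the $O(1)$ corrections die, $c(H^{\#k},e)/k\to\zeta_e(H)$ by definition \eqref{EP qmor def}, and $\ell_L(kH)/k\geq r$ survives, giving $\zeta_e(H)\geq r=\inf_L H$, which is exactly $e$-heaviness.

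The main obstacle I anticipate is step (3): controlling $\ell(H,\mathcal{CO}^0(e))$ from below by something like $\ell_L(H)$ up to a $k$-independent constant. One needs that $\mathcal{CO}^0(e)$, being a nonzero idempotent in the (finite-dimensional over $\Lambda$) ring $HF(L)$, is not "killed" by the action filtration — equivalently, that $1_L$ lies in the ideal generated by $\mathcal{CO}^0(e)$ after inverting Novikov, so that $\ell(H,1_L)\leq \ell(H,\mathcal{CO}^0(e))+\text{const}$ via the triangle inequality for {\lag} {\specinv}s. This is where the idempotent hypothesis (rather than just $\mathcal{CO}^0(e)\neq 0$) is essential, and where one must be careful that the module/ring structures, PSS maps, and filtrations are all compatible — the kind of compatibility that Proposition \ref{co oc spec inv} packages but which must be invoked at the level of iterates. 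Everything else is a formal manipulation of the axioms of {\specinv}s (triangle inequality, valuation property \eqref{novikov shift}, {\lag} control \eqref{Lag control prop}) together with the asymptotic definition of $\zeta_e$.
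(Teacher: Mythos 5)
First, a procedural note: the paper cites this result from [FOOO19, Theorem 1.6] and does not supply its own proof, so there is no internal argument to compare against; I am evaluating your proposal on its own merits.

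The outer scaffolding of your argument is correct and standard: iterate the autonomous Hamiltonian, apply Proposition \ref{co oc spec inv}(1) to get $c(kH,e)\geq \ell(kH,\mathcal{CO}^0(e))$, bound the right-hand side from below by $k\inf_L H$ plus a $k$-independent constant, divide by $k$, and let $k\to\infty$ to obtain $\zeta_e(H)\geq \inf_L H$, which is $e$-heaviness of $L$.

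However, your step (3) contains a genuine gap. You assert that ``$1_L$ lies in the ideal generated by $\mathcal{CO}^0(e)$ after inverting Novikov,'' and deduce from the triangle inequality that $\ell(H,1_L)\leq \ell(H,\mathcal{CO}^0(e))+\text{const}$. That claim is false in general: a nonzero idempotent $\mathcal{CO}^0(e)$ need not be a unit of $HF(L)$, and whenever $\mathcal{CO}^0(e)\neq 1_L$ the ideal $\mathcal{CO}^0(e)\cdot HF(L)$ is a \emph{proper} direct summand that does not contain $1_L$; inverting Novikov changes nothing here since $HF(L)$ is already a module over the Novikov field. Moreover the triangle inequality only yields upper bounds on $\ell(H,a\ast b)$ in terms of $\ell(H,a)$; to turn this into the lower bound you want you would need an inverse of $\mathcal{CO}^0(e)$, which does not exist. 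You also misattribute where the idempotent hypothesis does its work: idempotence of $e$ is needed so that the limit defining $\zeta_e$ in \eqref{EP qmor def} exists (subadditivity of $k\mapsto c(kH,e)$ via $e\ast e=e$); it is not what controls $\ell(-,\mathcal{CO}^0(e))$ from below.

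The fix is simpler than the route you attempted, and you in fact gesture at it (``or more directly a lower bound in terms of $r$'') before pivoting away. Bypass $1_L$ entirely and apply the Lagrangian analogue of Hofer continuity \eqref{Hofer conti abs} together with the valuation property \eqref{valuation} directly to the fixed nonzero class $\alpha:=\mathcal{CO}^0(e)$. For any Hamiltonian $G$ one has
$$\ell(G,\alpha)-\ell(0,\alpha)\ \geq\ \int_0^1 \min_{x\in L} G_t(x)\,dt, \qquad \ell(0,\alpha)=\nu(\alpha),$$
so for autonomous $H$ (where $H^{\# k}=kH$),
$$c(kH,e)\ \geq\ \ell(kH,\alpha)\ \geq\ k\inf_L H + \nu(\alpha).$$
The hypothesis $\mathcal{CO}^0(e)\neq 0$ is exactly what makes $\nu(\alpha)$ a finite real number, so it is a harmless $O(1)$ correction. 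Dividing by $k$ and sending $k\to\infty$ gives $\zeta_e(H)\geq \inf_L H$ as desired, with no appeal to the ring structure of $HF(L)$ at this step.
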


\begin{remark}
When $\zeta_e$ is {\homo}, e.g. when $e$ is a unit of a field factor of $QH (X,\omega)$ and $\zeta_e$ is an {\EP} {\qmor}, then heaviness and {\suphvness} are equivalent so Theorem \ref{CO map heavy} will be good enough to obtain the {\suphvness} of $L$.
\end{remark}

\subsection{Donaldson divisors and Biran decomposition}\label{Donaldson divisors and Biran decomposition}

In this section, we briefly review the notion of Donaldson divisors in the sense of \cite{[BK13]} and explain an associated decomposition result due to Biran \cite{[Bir01],[Bir06]}. We will also see a way to construct a {\lag} {\smfd} in a closed {\symp} {\mfd} from a {\lag} {\smfd} in its Donaldson divisor, c.f. Definition \ref{donaldson}.

We first review the construction of {\symp} disk bundles.

Let $(\Sigma , \sigma)$ be an integral {\symp} {\mfd}. Take a complex line bundle 
$$\mathcal{L} \longrightarrow \Sigma$$
such that 
$$c_1(\mathcal{L} ) = [\sigma] .$$
Fix a hermitian metric $|\cdot|$ on $\mathcal{L}$ and a hermitian connection $\nabla$ such that the curvature form satisfies
$$R^{\nabla} = \frac{i}{2\pi} \sigma . $$
These datum determine the global angular form $\alpha^{\nabla}$, which satisfies
\begin{equation}
\begin{gathered}
    \alpha^{\nabla}|_{\Hor^{\nabla}} = 0,\\
    \alpha^{\nabla}_{(u)} (u) =0, \forall u \in \mathcal{L},\\
    \alpha^{\nabla}_{( u)} (i u) = \frac{1}{2\pi}, \forall u \in \mathcal{L},
\end{gathered}
\end{equation}
where $\Hor^{\nabla}$ denotes the horizontal distribution for $\nabla$. The global angular form $\alpha^{\nabla}$ satisfies
$$d \alpha^{\nabla} = - \pi^{\ast} \sigma .$$

The following form, which will be called the canonical {\symp} form on $\mathcal{L}$ induced by $(\Sigma , \sigma)$, defines a {\symp} structure on $\mathcal{L}$:
\begin{equation}
    \begin{gathered}
        \omega_{\can}:= -d ( e^{-r^2} \alpha^{\nabla}) = e^{-r^2} \pi^{\ast} \sigma + 2r e^{-r^2} dr \wedge \alpha^{\nabla} .
    \end{gathered}
\end{equation}
The radius $r$ part of the line bundle will be denoted as follows:
$$ \mathcal{L}_{\leq r}:= \{u\in \mathcal{L} : |u| \leq r  \} .$$

We now define Donaldson divisors.

\begin{definition}[{\cite[Section 2.3]{[BK13]}}]\label{donaldson}

Let $(X,\omega)$ be a closed integral {\symp} {\mfd}. A smooth hypersurface $\Sigma$ is called a Donaldson divisor if it satisfies the following properties:
\begin{enumerate}
    \item The homology class $\Sigma \in H_{2n-2} (X ;\Z)$ is Poincar\'e dual to $k [\omega] \in H^2 (X ; \Z)$ for some $k \in \N$.
    
    \item There exists a tubular neighborhood $U$ of $\Sigma$ in $X$ such that its closure is symplectomorphic to a standard {\symp} disk bundle $(\mathcal{L}_{\delta} , \frac{1}{k}\omega_{\can})$ over $(\Sigma , k \omega_{\Sigma})$ for some $\delta>0$, where $\omega_{\Sigma}$ denotes the restriction of $\omega$ to $\Sigma$.
    
    \item The complement of $\ovl{U}$ in $X$, i.e. $X \backslash \ovl{U} $ is a Weinstein domain.
\end{enumerate}

\end{definition}

Note that the integer $k$ that appear in the second column is called the \textit{degree} of the Donaldson divisor. We introduce two important classes of facts that give important examples of Donaldson divisors.

\begin{example}
\begin{enumerate}
    \item Let $X$ be a smooth projective variety and let $\Sigma$ be a smooth ample divisor. Denote the K\"ahler form $\omega$, which represents $c_1 (\mathcal{O}_{X} (\Sigma))$. Biran showed in \cite{[Bir01]} that $\Sigma$ is a Donaldson divisor of $(X,\omega)$.
    
    \item For any integral {\symp} {\mfd} $(X,\omega)$ and a sufficiently large $k \in \N$, there exists a Donaldson divisor $\Sigma$ such that $PD([\Sigma]) = k [\omega]$ , c.f. \cite{[Don96],[Gir02]}
\end{enumerate}
\end{example}
In order to emphasise that $\mathcal{L}$ is a fibration over $\Sigma$, we will denote
$$\mathcal{L}:= D\Sigma$$
from now on. Biran, in \cite{[Bir01]}, proved the following decomposition associated to a Donaldson divisor.

\begin{theo}\label{biran decomp}
Let $(X,\omega)$ be a closed integral {\symp} {\mfd} and let $\Sigma$ be a Donaldson divisor of degree $k$. Denote the canonical {\symp} disk bundle associated to $(\Sigma, k \omega_{\Sigma})$ by $(\mathcal{L} , \frac{1}{k}\omega_{\can})$. There exists a {\symp} embedding
$$ F : (D\Sigma , \frac{1}{k}\omega_{\can}) \hookrightarrow (X , \omega) $$
such that 
\begin{enumerate}
    \item $F(x, 0 ) =x$ for all $x\in \Sigma$, where $(x, 0 ) \in D\Sigma$ corresponds to a point in the zero section of $\mathcal{L}$.
    
    \item The set $\Delta:=X \backslash F(D\Sigma)$ is a isotropic CW-complex with respect to $\omega$.
    
    \item $(X \backslash F(D\Sigma_r) , \omega)$ is a Weinstein domain for all $r>0$.
    
    \item If the Weinstein manifold $X  \backslash \Sigma$ is subcritical, then $\Delta$ does not contain any {\lag} cell, thus $\dim (\Delta)< n = \frac{1}{2}\dim (X)$.
    \end{enumerate}

\end{theo}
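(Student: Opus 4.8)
\textbf{Proof strategy (following Biran \cite{[Bir01]}).} The plan is to equip the open \mfd{} $X\setminus\Sigma$ with a Weinstein structure in which a punctured tubular \nbhd{} of $\Sigma$ is the convex end, to let $\Delta$ be the resulting skeleton, and to let $F$ be the identification of $X\setminus\Delta$ with the disk bundle $D\Sigma$ furnished by the Liouville flow. On the line bundle $D\Sigma=\mathcal L$ with $\tfrac1k\omega_{\can}=\tfrac1k\,d(-e^{-r^2}\alpha^{\nabla})$, a direct computation — using the angular properties $\alpha^{\nabla}(\partial_r)=0$ and $d\alpha^{\nabla}=-\pi^{\ast}\sigma$ — shows that the Liouville field $Z_0$ dual to the primitive $-\tfrac1k e^{-r^2}\alpha^{\nabla}$ is a negative multiple of the radial field $\partial_r$. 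Hence on $\mathcal L\setminus\Sigma$ the field $Z_0$ is transverse to every sphere bundle $\partial\mathcal L_r$, its forward flow runs toward the zero section, and after the standard reparametrisation $r\mapsto s$ of the radial coordinate (with $s\to+\infty$ as $r\to0$) the region $\{0<r\le\delta\}$ becomes a genuine positive symplectization $\big(Y\times\R_{>0},\,d(e^s\alpha^{\nabla}|_Y)\big)$ of the unit normal circle bundle $(Y,\ker\alpha^{\nabla}|_Y)$ of $\Sigma$. Thus $\Sigma$ plays the role of the contact end ``at infinity'' of $X\setminus\Sigma$, while flowing $Z_0$ backward increases $r$ and moves into $X\setminus\overline{U}$.

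\emph{The global Weinstein structure and the decomposition.} By Definition \ref{donaldson}(3) together with (2), the primitive $\tfrac1k(-e^{-r^2}\alpha^{\nabla})$, defined on the collar $\overline{U}\setminus\Sigma$, extends over $X\setminus\overline{U}$ to a primitive $\lambda$ of $\omega$ making $X\setminus\overline{U}$ a Weinstein domain with convex boundary $(Y,\ker\alpha^{\nabla}|_Y)$. Gluing this domain to the symplectization end of the previous paragraph along $Y$ — inserting a piece of symplectization collar of the appropriate length so that the two primitives match near $Y$ — produces a finite-type Liouville structure on $X\setminus\Sigma$ whose only end is the convex end around $\Sigma$, and after a $C^{\infty}$-small deformation supported in the collar it may be taken Weinstein: the Liouville field is gradient-like for an exhausting Morse function whose critical points are isotropic and all lie in $X\setminus\overline{U}$, and the handle decomposition can be arranged so that its skeleton is a finite CW complex. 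Let $\Delta\subset X\setminus\overline{U}\subset X$ be this skeleton; it is compact, isotropic (descending manifolds of a Weinstein Morse function are isotropic) and CW, which is (2). Flowing the convex end backward along the Liouville field sweeps out precisely $(X\setminus\Sigma)\setminus\Delta$ and identifies it, via first return to the sphere bundles, with the positive symplectization $Y\times\R_{>0}$, i.e. with the punctured disk bundle $D\Sigma\setminus\Sigma$, the zero section corresponding to $\Sigma$; extending over the zero section yields a \symp{} embedding $F\co(D\Sigma,\tfrac1k\omega_{\can})\hookrightarrow(X,\omega)$ with $F(D\Sigma)=X\setminus\Delta$ and $F|_\Sigma=\id$, giving the embedding statement and (1).

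\emph{Remaining claims.} For (3), $F(D\Sigma_r)$ is a tubular \nbhd{} of $\Sigma$ bounded by a sphere bundle transverse to the Liouville field, so its complement $X\setminus F(D\Sigma_r)$ is $X\setminus\Sigma$ truncated along its convex direction, hence a Weinstein domain, for every $r>0$. For (4), subcriticality of the Weinstein manifold $X\setminus\Sigma$ means it carries a Weinstein structure all of whose Morse indices are $<n$; arranging the structure above to be of this type, $\Delta$ is a union of descending manifolds of dimension $<n$, so $\dim\Delta<n$ and $\Delta$ contains no \lag{} ($n$-dimensional) cell.

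\emph{Main obstacle.} The delicate part is the construction in the second paragraph: interpolating between the explicit disk-bundle model near $\Sigma$ and the abstract Weinstein structure on $X\setminus\overline{U}$ while keeping the Liouville field transverse to the sphere bundles throughout the gluing collar, and certifying that the result is genuinely Weinstein. The CW-complex assertion deserves particular care, since the skeleton of a general Weinstein domain need not, a priori, be a CW complex: one must either use a handle decomposition with sufficiently tame attaching data or first perturb the Liouville structure within its homotopy class so the skeleton becomes CW. Once these are in hand, the Liouville-flow identification of $X\setminus\Delta$ with $D\Sigma$ and the dimension count in (4) are formal.
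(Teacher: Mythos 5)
This theorem is not proven in the paper; it is cited directly from Biran \cite{[Bir01]} (see also \cite{[Bir06]}), so there is no ``paper's own proof'' to compare against. Your proposal does reconstruct Biran's strategy correctly in outline: put a Weinstein structure on $X\setminus\Sigma$ with convex end near $\Sigma$, declare $\Delta$ to be its skeleton, and use the Liouville flow to identify $X\setminus\Delta$ with the standard disk bundle $D\Sigma$. The computation that the Liouville field dual to $-\tfrac1k e^{-r^2}\alpha^{\nabla}$ equals $-\tfrac{1}{2r}\partial_r$ is correct, as is the observation that it is transverse to the sphere bundles and flows toward the zero section.

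However, your description of the collar $\{0<r\le\delta\}$ as ``a genuine positive symplectization $(Y\times\R_{>0},\,d(e^s\alpha^{\nabla}|_Y))$ with $s\to+\infty$ as $r\to 0$'' is false as stated. First, the symplectic volume of a half-infinite symplectization $Y\times\R_{>0}$ with form $d(e^s\beta)$ is infinite (the volume form scales like $e^{ns}ds$), whereas $\{0<r\le\delta\}\subset D\Sigma$ has finite symplectic volume, so the two cannot be symplectomorphic. Second, the correct substitution is $e^s=e^{-r^2}$ (so $\lambda_0=e^s\beta$ with $\beta=-\tfrac1k\alpha^{\nabla}|_Y$), which gives $s=-r^2\in[-\delta^2,0)$: the collar is a \emph{finite-length} piece of a symplectization, and the Liouville flow $\dot r=-\tfrac1{2r}$ hits $r=0$ in finite time $t=r_0^2$. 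Thus $\Sigma$ does not sit ``at infinity'' for this primitive; one would have to rescale or Weinstein-homotope the Liouville structure to manufacture a cylindrical end. This does not wreck the overall argument — the decomposition statement is about compact Weinstein domains $X\setminus F(D\Sigma_r)$ and the flow identification requires only finite flow times — but the phrase ``after the standard reparametrisation'' is hiding a real geometric fact that needs to be stated differently. You correctly flag that the gluing step and the CW-complex/taming issue are the delicate parts of a full proof; in Biran's treatment the latter is handled via the specific plurisubharmonic exhaustion arising from the Kähler/Donaldson setup.
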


We now explain the {\lag} circle bundle construction. Let $L$ be a {\lag} {\smfd} in $\Sigma$. Consider the radius $r>0$ circle bundle associated to the line bundle $ \mathcal{L} \to \Sigma$:
$$ \mathcal{L}_{ |u| = r}:= \{u\in \mathcal{L} : |u| = r  \} .$$
The set
$$ \wt{L}_r := \pi_{ |u| = r} ^{-1} (L),\ \pi_{ |u| = r}: \mathcal{L}_{ |u| = r} \to \Sigma $$
defines a {\lag} {\smfd} in $\mathcal{L}_{ }$, which is a circle bundle over $L$. Note that $\pi_{r}$ denotes the restricted projection $\mathcal{L}_{|u| = r} \to \Sigma$. Via the {\symp} embedding $F : (\mathcal{L} , \frac{1}{k}\omega_{\can}) \hookrightarrow (X , \omega) $, we can see $\wt{L}_r$ as a {\lag} {\smfd} in $X \backslash \Sigma $ or $X$, which we will always do in the sequel without mentioning.

When $L$ is a monotone in $\Sigma$, then there is a distinguished radius $r_0>0$ for which the lifted {\lag} {\smfd} $\wt{L}$ becomes also monotone in $X$ and according to \cite[{\propo} 6.4.1]{[BC09]}, it satisfies

\begin{equation}
    r_0 ^2 = \frac{2\kappa_L}{2\kappa_L+1}
\end{equation}
where $\kappa_L$ is the monotonicity constant for $L$ in $\Sigma$, i.e. $\omega_{\Sigma}|_{\pi_2(\Sigma,L)} = \kappa_L \cdot \mu_L|_{\pi_2(\Sigma,L)}$. We sometimes call the radius $r_0$ the monotone radius as well.

In the following, the lifted {\lag} {\smfd} $\wt{L}:=\wt{L}_{r_0}$ will always be this distinguished monotone {\lag} {\smfd} in $X$.

\begin{remark}
Given a Donaldson divisor $\Sigma$ of degree $k$ in $(X,\omega)$, we have $PD([\Sigma])= k[\omega]$. By rescaling the {\symp} form, we can assume that the divisor satisfies $PD([\Sigma])= [\omega]$ without loss of generality. We will always do this rescaling beforehand so that the various formulae related to {\symp} disk bundles and the Biran decomposition become simpler.
\end{remark}

\subsection{Borman's reduction of {\qmor}s}\label{Borman's reduction result}

In this section, we briefly review Borman's method from \cite{[Bor12]} to construct {\qmor}s on $\wt{\Ham}(\Sigma)$ from {\qmor}s on $\wt{\Ham}(X)$ where $X$ is a monotone {\symp} {\mfd} and $\Sigma$ is a Donaldson divisor therein.

We can see Biran decomposition as a special case of symplectization of a contact manifold in the following way: Let $(Y, \alpha)$ be a compact contact manifold. Now, assume that the Reeb flow defines a free $S^1$-action on $Y$. Then the quotient $\Sigma:=Y / S^1$ is a {\symp} {\mfd} and $Y \to \Sigma$ defines a circle bundle. Now, the symplectization of $(Y, \alpha)$, namely $(Y \times \R , d(e^{r}\alpha))$, is precisely the standard disk bundle over $\Sigma$. Now, given a {\hamil} $H$ on $\Sigma$, one can lift it to a {\hamil} on $X$ in the following way: take a smooth function 
 $$h:[0,1) \to \R$$
 such that $h(0)=1$ and it vanishes near $r=1$, and is a decreasing function. Consider the function
 \begin{equation}\label{Borman Ham lift}
     \begin{gathered}
         \R / \Z \times D\Sigma \to \R \\
         (t,x) \mapsto h(||x||) H\circ \pi (x)
     \end{gathered}
 \end{equation}
 where $||x||=h(x,x)^{1/2}$ is the radial coordinate of $x \in D\Sigma$ ({\wrt} to the hermitian metric $h$) and $\pi: D\Sigma \to \Sigma$ is the projection. This function can be regarded as a {\hamil} on $X$ via the symplectomorphism 
 $$F: D\Sigma  \xrightarrow[]{\simeq }  X \backslash \Delta$$
 and the trivial extension over $\Delta$. 
 Denote the set of paths of {\hamil} {\diffeo}s on a {\symp} {\mfd} $M$ by $\mathcal{P}\Ham(M)$:
 $$\mathcal{P}\Ham(M) :=\{ \gamma: [0,1] \to \Ham(M) : \gamma (0)=\id  \}.$$
 The extension of {\hamil}s on $\Sigma$ to $X$ described above defines a map 
 $$\Theta:  \mathcal{P}\Ham (\Sigma) \to \mathcal{P}\Ham (X), $$
 as elements of $\mathcal{P}\Ham (\Sigma)$ (resp. $\mathcal{P}\Ham (X)$) can be identified with  mean-normalized {\hamil}s on $\Sigma$ (resp. $X$). Borman proved the following.
 
 \begin{theo}[{\cite{[Bor12]}}]\label{Borman's reduction}
 Let $(X,\Sigma)$ be a pair of a closed {\symp} {\mfd} and a Donaldson divisor. Assume that the skeleton $\Delta$ is small (see Definition \ref{smallness def}). For any {\homo} {\qmor}
 $$\mu : \wt{\Ham}  (X) \to \R,$$
 the map 
 \begin{equation}\label{Borman reduction def}
     \begin{gathered}
         \wt{\Ham}  (\Sigma) \to \R \\
         \Theta^\ast \mu (\phi):= \mu \circ p_{X} \circ \Theta \circ p_{\Sigma} ^{-1} (\phi)  
     \end{gathered}
 \end{equation}
 is well-defined and defines a {\homo} {\qmor} on $\wt{\Ham}  (\Sigma)$, where 
  \begin{equation}
     \begin{gathered}
          p_{X}: \mathcal{P}\Ham(X) \to \wt{\Ham}  (X)=\mathcal{P}\Ham(X) / \text{homotopy equiv.}\\
          \gamma \mapsto [\gamma]
     \end{gathered}
 \end{equation}
 is the homotopy projection. The map $p_{\Sigma}$ is defined analogously.
 \end{theo}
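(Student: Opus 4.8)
The plan is to verify, in turn, that $\Theta^{\ast}\mu$ does not depend on the chosen representative path, that it obeys the {\qmor} inequality, and that it is {\homo}. In all three steps the mechanism is the same: the lifting map $\Theta$ of \eqref{Borman Ham lift} is neither compatible with composition nor invariant under homotopies of paths, but for a suitable choice of the radial cut‑off $h$ each of these defects is realized by a {\hamil} isotopy supported in an arbitrarily small {\nbhd} $U$ of the skeleton $\Delta$; the smallness hypothesis on $\Delta$ (Definition \ref{smallness def}) is precisely the statement that $\mu$ vanishes on the classes of such isotopies, and one arranges the supports so that the relevant elements commute and the quasimorphism defect terms of $\mu$ drop out.

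Concretely, for \textbf{well‑definedness} I would take two paths $\gamma_{0},\gamma_{1}\in \mathcal{P}\Ham(\Sigma)$ representing the same $\phi\in\wt{\Ham}(\Sigma)$, connect them by a homotopy rel endpoints, and lift this homotopy {\hamil}‑by‑{\hamil}. The resulting homotopy between $\Theta(\gamma_{0})$ and $\Theta(\gamma_{1})$ in $\mathcal{P}\Ham(X)$ is rel endpoints over the bulk of $D\Sigma$, so $p_{X}\Theta(\gamma_{0})$ and $p_{X}\Theta(\gamma_{1})$ differ by the class of a {\hamil} supported in $U$; since $\mu$ kills that class and (after arranging disjoint supports) commutes with it, $\mu\bigl(p_{X}\Theta(\gamma_{0})\bigr)=\mu\bigl(p_{X}\Theta(\gamma_{1})\bigr)$. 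For the \textbf{quasimorphism inequality}, choose {\hamil}s $H,G$ on $\Sigma$ generating representatives of $\phi,\psi$, so that $H\# G$ represents $\phi\psi$; the two lifts $\wt{H\# G}$ and $\wt H\#\wt G$ agree along the zero section $\Sigma$ and differ only by a {\hamil} supported near $\Delta$, whence $p_{X}\Theta(\gamma_{H\# G})$ and $p_{X}\Theta(\gamma_{H})\cdot p_{X}\Theta(\gamma_{G})$ have the same $\mu$‑value, and therefore
\[
\bigl|\Theta^{\ast}\mu(\phi\psi)-\Theta^{\ast}\mu(\phi)-\Theta^{\ast}\mu(\psi)\bigr|
=\bigl|\mu\bigl(p_{X}\Theta(\gamma_{H})\,p_{X}\Theta(\gamma_{G})\bigr)-\mu\bigl(p_{X}\Theta(\gamma_{H})\bigr)-\mu\bigl(p_{X}\Theta(\gamma_{G})\bigr)\bigr|\le D,
\]
with $D$ the defect constant of $\mu$.

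\textbf{Homogeneity} follows along the same lines: $H^{\# k}$ represents $\phi^{k}$, iterating the comparison above shows $p_{X}\Theta(\gamma_{H^{\# k}})$ and $\bigl(p_{X}\Theta(\gamma_{H})\bigr)^{k}$ differ by a {\hamil} supported near $\Delta$ (such {\hamil}s forming a subgroup), and homogeneity of $\mu$ gives $\Theta^{\ast}\mu(\phi^{k})=\mu\bigl((p_{X}\Theta(\gamma_{H}))^{k}\bigr)=k\,\mu\bigl(p_{X}\Theta(\gamma_{H})\bigr)=k\,\Theta^{\ast}\mu(\phi)$.

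The \textbf{main obstacle} is the geometric localization step: establishing, from the precise structure of the Biran decomposition (the {\symp} disk‑bundle {\nbhd}, the radial cut‑off $h$, and the behaviour of the lifted {\hamil} flow on the collar of $\Delta$), that every defect of $\Theta$ — failure of homotopy‑invariance, of multiplicativity, of compatibility with iteration — can be pushed into an arbitrarily small {\nbhd} of $\Delta$ and made to commute with the remaining dynamics. Once this is done with enough care, the smallness of $\Delta$ converts it directly into the equalities and estimates used above.
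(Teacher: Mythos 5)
The paper states this theorem as a result of Borman \cite{[Bor12]} and does not give a proof of it, so there is no argument in the text to compare against. Your sketch captures the overall strategy that Borman uses (lift Hamiltonians from $\Sigma$ to $X$ via the disk-bundle structure, track how $\Theta$ fails to be a homomorphism and to be homotopy-invariant, and push those defects into an arbitrarily small neighbourhood of the skeleton), but two points in the sketch are off and they matter.

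First, you characterize the smallness hypothesis as "$\mu$ vanishes on isotopies supported near $\Delta$." Definition \ref{smallness def} says something genuinely different: $\mu$ restricts to the \emph{Calabi homomorphism} on a neighbourhood $U$ of $\Delta$, i.e.\ $\mu(G)=\mathrm{Cal}(G)$ for $G$ supported in $U$, and $\mathrm{Cal}$ is not the zero map. To run your argument one must in addition show that the particular defect isotopies produced by comparing $\Theta(\gamma_{H\#G})$ with $\Theta(\gamma_H)\cdot\Theta(\gamma_G)$ (and likewise the homotopy and iteration defects) have vanishing Calabi invariant, or that their Calabi contributions cancel once the lifted Hamiltonians are mean-normalized. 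As written you are silently assuming a stronger hypothesis than the one given.

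Second, the step from "$\mu$ vanishes on the defect class $\delta$" to "$\mu(g\delta)=\mu(g)$" uses that homogeneous quasimorphisms are additive on \emph{commuting} elements, and the required commutativity is asserted rather than established. The defect is generated by a Hamiltonian supported in the collar $\{r>1-\eps\}$, and the lifted flows do preserve this collar (the radial coordinate is a first integral of the lifted dynamics), but preservation of the collar does not make the restricted dynamics commute with $\delta$. Arranging this commutativity — or replacing it with an argument that only bounds the quasimorphism defect and then homogenizes — is exactly where the technical content of Borman's proof lies; you flag it as "the main obstacle" without resolving it, so what you have is a plausible roadmap rather than a proof.
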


 The following diagram summarises the reduction procedure:
 
\begin{equation}
\begin{tikzcd}
\mathcal{P}\Ham(\Sigma) \arrow[r,"\Theta"] \arrow[d,"p_{\Sigma}"] & \mathcal{P}\Ham(X)\arrow[d,"p_{X}"] & \ \\
\wt{\Ham}  (\Sigma) & \wt{\Ham}  (X) \arrow[r,"\mu"] & \R .
\end{tikzcd}
\end{equation}

 Note that the proof of Theorem \ref{Borman's reduction} uses only properties of {\qmor}s and does not appeal to Floer theory, thus it applies not only to {\EP} {\qmor}s and to any closed {\symp} {\mfd} without the monotonicity condition.

We precise the \textit{smallness} condition.

\begin{definition}\label{smallness def}
Let $(X,\Sigma)$ be a pair of a closed {\symp} {\mfd} and a Donaldson divisor and 
$$\zeta: \wt{\Ham} (X) \to \R$$
a {\qmor}. The skeleton $\Delta$ is called small {\wrt} $\zeta$ if there exists a small neighborhood $U$ of $\Delta$ such that $\zeta$ restricts to the Calabi homomorphism on $U$, i.e. for any {\hamil} $H$ supported in $U$, we have
$$ \zeta(H) = Cal (H):= \int_{t=0} ^1 \left( \int_{X} H(t,x) \omega^n \right) dt .$$
\end{definition}
 
In practice, the following properties are useful to decide that the skeleton is small. 

\begin{prop}\label{criterion for smallness}
Let $(X,\Sigma)$ be a pair of a closed {\symp} {\mfd} and a Donaldson divisor and 
$$\ovl{c}_e: \wt{\Ham} (X) \to \R$$
an {\EP} {\qmor}. If either one of the following conditions holds, then the skeleton $\Delta$ is small {\wrt} $\ovl{c}_e$:
\begin{enumerate}
    \item The complement of the skeleton $X\backslash \Delta$ is $\ovl{c}_e$-{\suphv}.
    
    \item The skeleton $\Delta$ has $\ovl{c}_e$-measure zero:
    $$\tau_{\ovl{c}_e} (\Delta) = 0 .$$
\end{enumerate}
\end{prop}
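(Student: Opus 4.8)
\textbf{Proof proposal for Proposition \ref{criterion for smallness}.}

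The plan is to show that under either hypothesis, any Hamiltonian $H$ supported in a suitable small neighborhood $U$ of $\Delta$ satisfies $\ovl{c}_e(H) = \Cal(H)$, which is exactly the smallness condition of Definition \ref{smallness def}. I will treat the two cases separately, as they rely on different features of the asymptotic spectral invariant $\zeta_e = \ovl{c}_e$.

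\emph{Case (1): $X \backslash \Delta$ is $\ovl{c}_e$-superheavy.} The key observation is that the skeleton $\Delta$ is an isotropic CW-complex (Theorem \ref{biran decomp}), hence has measure zero and empty interior, so we can choose neighborhoods $U$ of $\Delta$ of arbitrarily small symplectic volume whose complements $X \backslash U$ are still ``large.'' Actually the cleanest route is: since $X \backslash \Delta$ is $\ovl{c}_e$-superheavy (equivalently $e$-superheavy), any Hamiltonian $H$ supported in $U \subset\!\subset X$ away from some fixed point or from a displaceable piece of $X\backslash\Delta$ can be normalized so that $H$ vanishes on a superheavy set; by Proposition \ref{suphv constant}, $\zeta_e(H) = 0$ for such normalized $H$. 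The point is that $U$ can be taken displaceable (a small neighborhood of an isotropic complex, when $X\backslash\Sigma$ is subcritical, or more generally after shrinking), and a Hamiltonian supported in a displaceable set has $\zeta_e(H)$ computable via the displacement energy / Calabi invariant. More precisely, I would invoke the standard fact (Entov--Polterovich, and Borman \cite{[Bor12]}) that if $U$ is displaceable then on Hamiltonians supported in $U$ the homogeneous quasimorphism $\ovl{c}_e$ agrees with $\Cal_U = \Cal$; superheaviness of the complement is precisely what forces $\ovl{c}_e(H) \le \sup_{X\backslash U} H = 0$ for mean-normalized $H$ supported in $U$, and together with the Calabi property on displaceable sets this pins down $\ovl{c}_e(H) = \Cal(H)$.

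\emph{Case (2): $\tau_{\ovl{c}_e}(\Delta) = 0$.} Here I would use directly the definition of the $\ovl{c}_e$-measure (the quasi-measure associated to the quasimorphism): $\tau_{\ovl{c}_e}(\Delta) = 0$ means, by the construction of Entov--Polterovich's partial symplectic quasi-states / quasi-measures, that for every open neighborhood $U$ of $\Delta$ and every Hamiltonian $H$ supported in $U$ with $0 \le H \le 1$, one has $\zeta_e(H)$ controlled by $\tau_{\ovl{c}_e}(U)$, which can be made arbitrarily small by shrinking $U$. Rescaling and using homogeneity of $\ovl{c}_e$, this yields that $\ovl{c}_e$ restricted to Hamiltonians supported in a sufficiently small $U$ is a homogeneous quasimorphism vanishing on all nonnegative such Hamiltonians, hence (by the usual argument that a homogeneous quasimorphism on $\wt\Ham(U)$ bounded on a generating set, together with the fact that $\Cal$ is the unique homogeneous quasimorphism there in the relevant cases, or by a direct Hofer-continuity estimate) must equal $\Cal$. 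I would phrase this as: $\Delta$ having $\ovl{c}_e$-measure zero implies $\ovl{c}_e$ is ``supported away from $\Delta$'', which is the smallness condition.

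\emph{Main obstacle.} The delicate point in both cases is passing from the pointwise/measure-theoretic statement to the equality with the Calabi homomorphism \emph{on all} Hamiltonians supported in $U$, not merely nonnegative ones or time-independent ones — this requires knowing that $\ovl{c}_e$ restricted to $\wt\Ham_c(U)$ is a genuine homogeneous quasimorphism and invoking the characterization of Calabi as essentially the unique such object (or, when $U$ is displaceable, the explicit identity $\ovl{c}_e|_{\wt\Ham_c(U)} = \Cal$ from Entov--Polterovich). I expect most of the work to be in justifying that $U$ can be chosen simultaneously small enough for the measure/superheaviness hypothesis to bite and still be a displaceable (or otherwise controllable) neighborhood of the isotropic skeleton $\Delta$; this is where Theorem \ref{biran decomp}(2),(4) — $\Delta$ isotropic, and of dimension $< n$ in the subcritical case — does the heavy lifting.
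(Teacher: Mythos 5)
The paper does not prove Proposition \ref{criterion for smallness} internally; it simply refers to \cite{[EP09]}, so there is no ``paper's proof'' to compare against. Your proposal, however, has a genuine gap. The load-bearing step in Case (1) is the claim that a small neighborhood $U$ of the skeleton $\Delta$ ``can be taken displaceable,'' after which you invoke the Entov--Polterovich Calabi property for displaceable sets. This is false in the generality needed: $\Delta$ is an isotropic CW-complex, and when $X\backslash\Sigma$ is not subcritical it contains Lagrangian cells and is typically a closed Lagrangian submanifold with nonvanishing Floer homology. In fact, in every example treated in this paper the skeleton is non-displaceable -- the anti-diagonal $S^2$ in $S^2\times S^2$, the Lagrangian $S^n$ in $Q^n$, and $\R P^3$ in $\C P^3$ -- so no neighborhood of $\Delta$ is displaceable, and the ``displaceable $\Rightarrow$ Calabi'' route cannot even start. (Displaceability does hold in the subcritical case by Theorem \ref{biran decomp}(4), but the proposition has no such hypothesis, and the paper needs it precisely outside that case.)

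There is a second problem: you argue that a Hamiltonian supported in $U$ ``vanishes on a superheavy set'' and then apply Proposition \ref{suphv constant}. But the hypothesis is that the open dense set $X\backslash\Delta$ is superheavy, while $H$ only vanishes on $X\backslash U$, a proper closed subset of $X\backslash\Delta$. Superheaviness is monotone in the wrong direction for this: $S\subset S'$ with $S$ superheavy forces $S'$ superheavy, not the reverse. So you cannot conclude $X\backslash U$ is superheavy. What actually rescues the application (see Section \ref{Completing the proof}) is that a concrete compact Lagrangian $\wt L\subset X\backslash\Delta$ is superheavy, and one can shrink $U$ to avoid $\wt L$; your write-up never isolates such a compact superheavy piece before shrinking. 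The correct argument in \cite{[EP09]} avoids displaceability altogether: one shows that the quasi-measure condition $\tau_{\ovl{c}_e}(\Delta)=0$ (to which condition (1) is reduced) forces the restriction of $\ovl{c}_e$ to $\wt\Ham_c(U)$ to be an honest homomorphism, which is then identified with $\Cal$. Your sketch of Case (2) points at the right mechanism, but as you acknowledge yourself, it stops precisely at the homomorphism step, which is where the actual work is.
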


For the proof of {\propo} \ref{criterion for smallness}, see \cite{[EP09]}.

\section{Proofs}\label{proofs}

\subsection{Statement of the main result}

We start by restating Theorem \ref{main thm intro} with the notions from the premilinary section \ref{prelim}.

\begin{theox}\label{main thm}
Let $(X,\Sigma)$ be a pair of a closed monotone {\symp} {\mfd} and a Donaldson divisor therein. Assume that the following conditions are satisfied:
\begin{itemize}
    \item There exists a monotone {\lag} torus $L$ in $\Sigma$ whose {\suppot} $W_L$ has a non-degenerate critical point.
    \item The {\suppot} $W_{\wt{L}}$ of the lifted monotone {\lag} torus $\wt{L}$ in $X$ also has a  non-degenerate critical point.
\end{itemize}
Then, there exist 
\begin{enumerate}
    \item an {\idem} $e_{X} \in QH(X)$ that is a unit of a field factor for which the skeleton $\Delta$ is small {\wrt} $\ovl{ c}_{ e_{X} } $, and
    
    \item  an {\idem} $e_{\Sigma} \in QH(\Sigma)$ that is a unit of a field factor and satisfies
    $$   \Theta^\ast \ovl{ c}_{ e_{X} } =  \frac{1}{2\kappa_L+1} \ovl{c}_{e_{\Sigma}  }  $$
where $\Theta^\ast $ denotes Borman's pull-back of {\qmor}s \eqref{borman map} and $\kappa_L$ is the monotonicity constant for $L$ in $\Sigma$, i.e. $\omega_{\Sigma}|_{\pi_2(\Sigma,L)} = \kappa_L \cdot \mu_L|_{\pi_2(\Sigma,L)}$.
\end{enumerate}

\end{theox}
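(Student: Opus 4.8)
The plan is to use the filtered Floer--Gysin sequence \eqref{strategy seq} together with the closed-open/open-closed maps to transport superheaviness information from $\wt{L}$ in $X$ to $L$ in $\Sigma$ and back. Concretely, the non-degenerate critical point hypothesis on the superpotentials $W_L$ and $W_{\wt{L}}$ guarantees (via the standard correspondence between critical points of the superpotential and field factors of quantum cohomology on which the closed-open map is non-trivial, c.f.~\cite{[FOOO19]}) that there are idempotents $e_\Sigma\in QH(\Sigma)$ and $e_X\in QH(X)$, each a unit of a field factor, with $\mathcal{CO}^0_L(e_\Sigma)\neq 0$ in $HF(L)$ and $\mathcal{CO}^0_{\wt L}(e_X)\neq 0$ in $HF(\wt L)$; one must check these can be chosen \emph{compatibly}, i.e.~so that the Gysin maps $i,p$ intertwine the relevant classes (this is where the construction of the quantum Gysin sequence with local systems, reviewed in the body, is used, and where the monotone radius $r_0$ and the constant $h(r_0)$ enter). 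By Theorem~\ref{CO map heavy} (and the remark following it, using homogeneity of the EP quasimorphisms attached to field-factor units), $\wt{L}$ is $e_X$-superheavy in $X$ and $L$ is $e_\Sigma$-superheavy in $\Sigma$.

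Next I would establish smallness of the skeleton $\Delta$ with respect to $\ovl{c}_{e_X}$. Since $\wt{L}=\wt{L}_{r_0}$ sits in $X\setminus\Sigma$ at radius $r_0<1$, it is contained in $X\setminus F(D\Sigma_r)$ for $r$ slightly above $r_0$; in particular $\wt{L}$ is disjoint from $\Delta$, and being $e_X$-superheavy it witnesses (via Proposition~\ref{suphv constant}) that no neighborhood of $\Delta$ can carry extra $\ovl{c}_{e_X}$-mass. More precisely I would argue that $\wt{L}$ being $e_X$-superheavy forces $X\setminus\Delta$ to be $e_X$-superheavy (any superheavy set disjoint from $\Delta$, together with the displaceability-type considerations for neighborhoods of the isotropic/subcritical skeleton, yields this), and then invoke Proposition~\ref{criterion for smallness}(1) to conclude $\Delta$ is small with respect to $\ovl{c}_{e_X}$. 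This gives item (1) and makes Borman's pull-back $\Theta^\ast\ovl{c}_{e_X}$ well-defined as a homogeneous quasimorphism on $\wt{\Ham}(\Sigma)$ by Theorem~\ref{Borman's reduction}.

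The heart of the argument is the identity $\Theta^\ast\ovl{c}_{e_X}=\tfrac{1}{2\kappa_L+1}\,\ovl{c}_{e_\Sigma}$. I would prove this by a two-sided spectral estimate, at the asymptotic level, for an arbitrary mean-normalized time-independent Hamiltonian $H$ on $\Sigma$ and its Borman lift $\wt H=\Theta(H)=h(\|\cdot\|)\,H\circ\pi$ on $X$. Using Proposition~\ref{co oc spec inv} applied to $\wt{L}\subset X$ one bounds $c(\wt H,e_X)$ between $\ell(\wt H,\mathcal{CO}^0_{\wt L}(e_X))$ and the open-closed image, and then the filtered Floer--Gysin sequence \eqref{strategy seq} together with diagram \eqref{filtered diagram} relates the filtration level $\tau$ on $HF_\Sigma(L,H)$ to the level $h(r_0)\tau+\eps'$ on $HF_X(\wt L,\wt H)$; passing to the asymptotic (homogenized) invariants kills the constant $\eps'$ and leaves exactly the multiplicative factor coming from $h(r_0)=r_0^2=\tfrac{2\kappa_L}{2\kappa_L+1}$, combined with the normalization discrepancy between $H$ on $\Sigma$ and $\wt H$ on $X$ (the mean-normalization of the lift contributes the remaining factor so that the net coefficient is $\tfrac{1}{2\kappa_L+1}$). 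Running the same chain of inequalities in the reverse direction—now using the open-closed map $\mathcal{OC}^0$ and the superheaviness of $L$ in $\Sigma$ via Theorem~\ref{CO map heavy} and Proposition~\ref{suphv constant}—yields the opposite inequality, and the two together give equality of the homogenized quasimorphisms. Defining $e_\Sigma$ to be the field-factor unit produced above then completes item (2).

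The main obstacle I anticipate is precisely the bookkeeping of the action/filtration shift through the Floer--Gysin sequence: one must show that the connecting maps $i_{Fl},p_{Fl}$ in \eqref{strategy seq} shift the action filtration by exactly the amount dictated by the symplectic area of the fiber disk (governed by $r_0$ and the profile function $h$), and that after homogenization all the bounded error terms—the $\eps'$ in \eqref{strategy seq}, the bounded defect of the quasimorphism, and the $O(1)$ ambiguity in the PSS identifications—wash out, leaving a clean multiplicative constant. A secondary subtlety is verifying that the idempotents $e_X$ and $e_\Sigma$ can be matched up through the Gysin sequence so that the closed-open images are simultaneously non-zero; this is where one genuinely needs the quantum (pearl-theoretic) incarnation of the Gysin sequence from \cite{[BK13]}, since the filtration arguments alone do not see the ring structure, and it is the reason both the Floer and the pearl pictures are developed in parallel in the body of the paper.
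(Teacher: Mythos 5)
Your overall strategy matches the paper's: construct the idempotents from the superpotential hypotheses via the open-closed map, prove smallness from superheaviness of $\wt{L}$, and then squeeze $c_{e_X}(\wt H)$ against $c_{e_\Sigma}(H)$ via the filtered Floer--Gysin sequence, homogenizing at the end. The smallness argument is fine (a superset of a superheavy set is automatically superheavy; no displaceability input is needed, that remark is a red herring). However, there is one genuine computational error and one genuine vagueness that matter.

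First, you write that the factor ``comes from $h(r_0)=r_0^2=\tfrac{2\kappa_L}{2\kappa_L+1}$, combined with the normalization discrepancy \ldots\ so that the net coefficient is $\tfrac{1}{2\kappa_L+1}$.'' This is wrong: with the profile $h(r)=1-r^2$ (the choice made following Borman), one has
\[
h(r_0)=1-r_0^2=1-\frac{2\kappa_L}{2\kappa_L+1}=\frac{1}{2\kappa_L+1},
\]
and the action computation gives $\mathscr{A}_{\wt L,h\cdot\pi^*H}([\wt z,\wt u])=(1-r_0^2)\,\mathscr{A}_{L,H}([z,u])$. Thus the entire multiplicative constant is $h(r_0)$ itself; there is no residual ``normalization discrepancy'' to supply a missing factor. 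Invoking such a discrepancy suggests you do not actually see why $\tfrac{1}{2\kappa_L+1}$ appears, and in a two-sided estimate an unaccounted-for constant would spoil the equality, not fix it.

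Second, you correctly flag that the idempotents $e_X$ and $e_\Sigma$ must be ``matched up'' through the Gysin sequence, but you do not say how. The paper resolves this concretely: the non-degeneracy of the critical points of $W_L$ and $W_{\wt L}$ is used to prove that the connecting map $\delta$ of the quantum Gysin sequence vanishes, which forces $i(1_L)=1_{\wt L}$ and gives a class $\alpha$ with $p(\alpha)=1_L$; these facts are what let you chain the $\mathcal{CO}^0/\mathcal{OC}^0$ estimates coherently in diagram \eqref{main diagram in the proof}. Moreover, the precise idempotents come from Sanda's result (the paper's Proposition on $e_{M,K}=\tfrac{1}{\langle p_K,p_K\rangle_{\mathrm{Muk}}}\mathcal{OC}^0(pt_K)$ with $\mathcal{CO}^0(e_{M,K})=1_K$), not from Theorem~\ref{CO map heavy} alone; the $\mathcal{CO}^0(e)=1_L$ identity is what makes $\ovl{c}_{e_X}=\ovl{\ell}_{\wt L}$ and hence the smallness argument go through cleanly, and the Mukai-pairing scalars are exactly the bounded constants that wash out under homogenization. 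Your version only records $\mathcal{CO}^0(e)\ne 0$, which gives heaviness but not the sharper equality used to close the estimates.
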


A sketch of the main ideas of the proof is given in Section \ref{Strategy}.

\subsection{{\BK}'s quantum Gysin sequence}\label{quantum Gysin}

In this section, we briefly review {\BK}'s quantum Gysin sequence \cite{[BK13]} but upgrading it to the $\C$-{\coeff} equipped with $\C^\ast$-local systems. Another important difference with \cite{[BK13]} is that, they work with general {\lag}s while we only work with {\lag} tori. Throughout the section, let $(X,\Sigma)$ be a pair of a closed monotone {\symp} {\mfd} and a Donaldson divisor therein. Consider a monotone {\lag} torus $L$ in $\Sigma$ equipped with a local system $\rho$ which is a non-degenerate critical point of the {\suppot} $W_L$ (provided that it exists), which we sometimes denote $(L,\rho)$ for clarity. Consider also a local system $\wt{\rho}$ for the lifted monotone {\lag} torus $\wt{L}$ in $X$ that is a non-degenerate critical point of the {\suppot} $W_{\wt{L}}$ (provided that it exists).

Now, consider a generic pearl data $\mathscr{D}:=(f,g, J)$ for $L$ in $\Sigma$ (see Section \ref{Floer and pearl} for definitions). We would like to define a nice {\acs} $\wt{J}$ on $X$ so that the projection $\pi: X \backslash \Delta \simeq D\Sigma \to \Sigma$ becomes $(\wt{J} , J)$-{\holo}, which will be crucial to relate {\holo} curves in $X$ and $\Sigma$ later on. First, we define an {\acs} $\wt{J}_{D\Sigma}$ on $X$ as follows:

For $v \in \Hor^{\nabla}$, 
\begin{equation}
    \begin{gathered}
        \wt{J}_{D\Sigma}(v):= (d \pi|_{\Hor^{\nabla}})^{-1} \circ J \circ d \pi (v)
    \end{gathered}
\end{equation}
and in the fiber component, define $\wt{J}$ to be multiplication by $i$, i.e. $\wt{J}_{D\Sigma}|_{\Vert^{\nabla}}=i$. Note that $\Hor^{\nabla}$ and $\Vert^{\nabla}$ denote the horizontal and vertical distributions, respectively, {\wrt} the connection $\nabla$ which we defined in Section \ref{Symplectic disk bundle}. Now, fix a small $\kappa >0$ and see $\wt{J}_{D\Sigma}$ an {\acs} in $F((D \Sigma)_{r_0 +\kappa} ) \subset X$ by using the embedding
$$F: D \Sigma \xrightarrow[]{\simeq} X \backslash \Delta $$
from the Biran decomposition (see Theorem \ref{biran decomp}). Extend this {\acs} to the complement $X \backslash F((D \Sigma)_{r_0 +\kappa} )$ in a generic way and denote the resulting {\acs} by $\wt{J}$.

{\BK} showed that by performing a neck-stretching to $\wt{J}$, we get a set of {\acs}s $\wt{\mathcal{J}}$, whose elements are called the admissible {\acs}s, that satisfies the following property:

\begin{prop}[{\cite[{\propo} 5.1]{[BK13]}}]\label{benefit SFT}
For any $\wt{J} \in \wt{\mathcal{J}}$, any $\wt{J}$-{\holo} curve $u:D \to X$ is contained in $F((D \Sigma)_{r_0 +\kappa} )$.
\end{prop}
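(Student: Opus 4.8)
The plan is to argue by contradiction using a neck-stretching (SFT compactness) argument along the boundary of a tubular neighborhood of the skeleton $\Delta$, or equivalently along a contact-type hypersurface $F((D\Sigma)_{r}) \subset X$ for a radius $r$ slightly larger than $r_0 + \kappa$. First I would set up the geometry: the embedding $F\colon D\Sigma \xrightarrow{\simeq} X\setminus\Delta$ from the Biran decomposition exhibits $X\setminus\Delta$ as a symplectic disk bundle, and the hypersurface $\Sigma_r := F(\partial (D\Sigma)_r)$ is of contact type with Reeb flow the fiberwise rotation (the circle action on $\mathcal{L}$). Since this Reeb flow has all orbits periodic (it is an $S^1$-action), there is a clean Morse--Bott family of Reeb orbits, which is exactly the setting in which SFT-style compactness and the degeneration of holomorphic curves into a broken building are well understood. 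I would choose the almost complex structure $\wt J$ to be cylindrical (translation-invariant, compatible with the contact form) on a collar $\Sigma_{[r_0+\kappa, \, r_0+\kappa+\delta_0]}$, and then take $\wt{\mathcal{J}}$ to consist of those $\wt J$ obtained by stretching the neck along $\Sigma_{r_0+\kappa+\delta_0/2}$ to infinite length.

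The core of the argument is then the following dichotomy. Suppose $u\colon D\to X$ is $\wt J$-holomorphic for some $\wt J \in \wt{\mathcal{J}}$ with neck-length parameter $\to\infty$, and suppose for contradiction that $u$ is \emph{not} contained in $F((D\Sigma)_{r_0+\kappa})$, i.e. its image meets the region $X\setminus F((D\Sigma)_{r_0+\kappa+\delta_0/2})$. By SFT compactness (in the relative/Lagrangian-boundary version, since $u$ has boundary on $\wt L = \wt L_{r_0} \subset F((D\Sigma)_{r_0+\kappa})$), as the neck is stretched $u$ converges to a holomorphic building with components living in: (a) the symplectization $\mathbb{R}\times \Sigma_{r_0+\kappa+\delta_0/2}$, (b) the "inside" piece, which compactifies to (a curve in) the disk bundle $D\Sigma$ over $\Sigma$, and (c) the "outside" piece, which is the Weinstein completion of $X\setminus F((D\Sigma)_{r})$. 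The hypothesis that $u$ leaves the inner region forces at least one nonconstant component in the symplectization or outside levels, and in particular a nonconstant punctured holomorphic curve in the completion of the Weinstein domain $X\setminus F((D\Sigma)_r)$ with only positive punctures (asymptotic to Reeb orbits on $\Sigma_r$). The contradiction comes from a Stokes/energy computation: the completed Weinstein manifold is exact, so by Stokes' theorem the $\omega$-energy (Hofer energy) of such a curve is $\sum (\text{periods of its positive Reeb asymptotics}) - (\text{boundary/interior contributions})$; but the boundary of $u$ lies on the \emph{monotone} Lagrangian $\wt L$ inside the disk bundle, not in this region, so a curve living entirely in the Weinstein completion with only positive punctures and no Lagrangian boundary must have nonnegative energy equal to a sum of positive-orbit periods, hence cannot close up — more precisely, one shows the relevant moduli space is empty because the would-be curve would have to be constant by the exactness of the Weinstein piece (the Liouville form restricts to an exact form, and a nonconstant sphere-or-plane-type curve with positive action asymptotics cannot exist for action/index reasons). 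I would phrase this exactly as Biran--Khanevsky do: the key input is that $X\setminus F((D\Sigma)_r)$ is Weinstein (Theorem \ref{biran decomp}(3)), so it carries no closed holomorphic curves and no finite-energy punctured curves with only positive ends, whence every level of the limiting building outside the disk bundle is constant, forcing the whole building — and thus $u$ — to lie in $\overline{F((D\Sigma)_r)}$; letting $\delta_0 \to 0$ (or choosing $r = r_0+\kappa$ from the start with a collar of size $\kappa$ to spare) gives the claim.

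The main obstacle I expect is the \emph{technical bookkeeping of SFT compactness with Lagrangian boundary} in the Morse--Bott (circle-action) setting: one must justify that the neck-stretched limit is a genuine holomorphic building, control the asymptotics at the new punctures (all Reeb orbits are multiples of the fiber circle, with periods a lattice determined by $[\sigma]$), and rule out "sneaky" components — e.g. a component that dips out and comes back, contributing a ghost. This is handled in \cite{[BK13]} by a combination of the Weinstein-domain exactness (no closed curves, monotonicity of the Liouville flow) and a positivity-of-energy argument for each symplectization level; I would follow their Section 5 verbatim, emphasizing only the modification needed for our setting, namely that restricting to Lagrangian \emph{tori} $L$ (rather than general Lagrangians) and working with $\C^\ast$-local systems does not affect the argument at all, since the compactness statement is purely about the ambient holomorphic curve and the local system enters only later as a weighting of the count. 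A secondary, minor point is to make the genericity of the extension of $\wt J$ over $X\setminus F((D\Sigma)_{r_0+\kappa})$ compatible with the neck-stretching — this is standard and I would simply cite the relevant transversality package.
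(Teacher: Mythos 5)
The paper does not actually prove Proposition~\ref{benefit SFT}: it states it as a citation to \cite[Proposition 5.1, Section 5]{[BK13]} and explicitly refers the reader there. So there is no paper proof to compare against directly; what you are writing is a reconstruction of Biran--Khanevsky's argument.

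Your overall strategy --- neck-stretching along the circle bundle $\Sigma_r$ slightly past $r_0+\kappa$, invoking SFT compactness with Lagrangian boundary in the Morse--Bott ($S^1$-action) setting, and then ruling out the outside pieces of the limiting building using the Weinstein structure of $X\setminus F((D\Sigma)_r)$ --- is indeed the approach of \cite[Section 5]{[BK13]}, and you correctly identify the main technical obstacle (Morse--Bott SFT compactness with Lagrangian boundary and the bookkeeping of punctures).

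However, there is a concrete gap in the step that is supposed to deliver the contradiction. You assert that the Weinstein completion ``carries no closed holomorphic curves \emph{and no finite-energy punctured curves with only positive ends},'' and later that ``a nonconstant sphere-or-plane-type curve with positive action asymptotics cannot exist'' in the Weinstein piece. The first half (no nonconstant \emph{closed} curves, by exactness and Stokes) is correct, but the second half is false in general: Weinstein completions certainly do admit nonconstant finite-Hofer-energy punctured curves with only positive ends --- a holomorphic plane $\C\to\C^n$, or more generally the curves that underlie symplectic/contact homology of Liouville domains, are exactly of this type. Exactness gives $E_\omega(\text{outside piece}) = \sum_i T(\gamma_i) \ge 0$ via Stokes, but this is a \emph{lower bound}, not an obstruction. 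A naive minimal-period comparison also does not always suffice: the minimal Reeb period at $\Sigma_{r_0+\kappa}$ is about $1-r_0^2 = \tfrac{1}{2\kappa_L+1}$, while a Maslov-$2$ disk on $\wt L$ has area $2\kappa_{\wt L} = \tfrac{2\kappa_L}{2\kappa_L+1}$, and the former need not exceed the latter (it fails whenever $\kappa_L > \tfrac12$). So the contradiction you need cannot come purely from ``exactness plus positivity of periods''; the actual argument in \cite{[BK13]} has to exploit more structure --- e.g.\ the precise relation between the energy budget imposed by monotonicity of $\wt L$, the multiplicities of the Reeb asymptotics (all multiples of the fiber class), and a Fredholm index count in the Morse--Bott setting, together with the way caps by fiber disks interact with the Maslov class. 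You should identify which of these mechanisms actually closes the argument rather than attributing it to exactness alone.
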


We refer the readers to \cite[Section 5]{[BK13]} for details concerning this property. With an admissible {\acs} $\wt{J} \in \wt{\mathcal{J}}$, we define a pearl complex $C^\ast (\mathscr{D} ) $ for $\wt{L}$ with the pearl datum $\mathscr{D}:=(\wt{f} , \wt{X} , \wt{J})$  where $\wt{X}$ is a suitable almost gradient vector field that projects to $X$.

{\BK} show that even though an admissible {\acs} $\wt{J} \in \wt{\mathcal{J}}$ is not a generic {\acs} and therefore the general pearl theory is not directly applicable, the map 
$$\wt{d}: C^\ast (\mathscr{D} ) \to C^{\ast+1} (\mathscr{D} ),$$
which is defined in the same way as the pearl differential for generic {\acs}s, is well-defined with an admissible {\acs} $\wt{J} \in \wt{\mathcal{J}}$ and defines a cochain complex $(C^\ast (\mathscr{D} )  , \wt{d} )$ whose cohomology is isomorphic to the generic pearl homology $QH^\ast (\wt{L})$.

We define the relevant maps to construct the quantum Gysin sequence.

\begin{definition}\label{chain prop}
Define the maps $i,p$ as follows: 
\begin{equation}
    \begin{gathered}
        i: C(\mathscr{D} ) \to C(\wt{\mathscr{D} }) \\
        x \mapsto x' 
    \end{gathered}
\end{equation}
and
\begin{equation}
    \begin{gathered}
        p: C(\wt{\mathscr{D} }) \to C(\mathscr{D} ) \\
        x' \mapsto 0,\\
        x'' \mapsto x .
    \end{gathered}
\end{equation}
\end{definition}

We will prove that the maps $i,p$ are chain maps. This statement was proven in \cite[Section 7.3]{[BK13]} but without local systems. 

\begin{prop}\label{chain prop}
The maps $i,p$ are both chain maps.
\end{prop}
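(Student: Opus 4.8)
The plan is to follow the structure of \cite[Section 7.3]{[BK13]} but keeping track of the local systems $\rho$, $\wt{\rho}$ in the definitions of the pearl differentials. Recall that the pearl complex $C(\wt{\mathscr{D}})$ of $\wt{L}$ splits, as a graded vector space, into two copies of the pearl complex of $L$: a generator $x$ of $C(\mathscr{D})$ gives rise to two generators $x'$ and $x''$ of $C(\wt{\mathscr{D}})$, corresponding to the two critical points of the chosen Morse function $\wt{f}$ on the circle fibre over each critical point of $f$. With this identification, $i$ sends $x \mapsto x'$ and $p$ sends $x' \mapsto 0$, $x'' \mapsto x$. To prove that $i$ and $p$ are chain maps we must understand the "block structure" of $\wt{d}$ with respect to the decomposition $C(\wt{\mathscr{D}}) = C' \oplus C''$, where $C'$ (resp. $C''$) is spanned by the $x'$ (resp. $x''$).

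First I would analyse the contributions to $\wt{d}$ coming from pearl trajectories in $X$. By Proposition \ref{benefit SFT}, every $\wt{J}$-holomorphic disk contributing to $\wt{d}$ lies inside the disk bundle $F((D\Sigma)_{r_0+\kappa})$, so it projects under $\pi$ to a $J$-holomorphic disk in $\Sigma$ with boundary on $L$; conversely each such disk in $\Sigma$ lifts to a one-parameter family of disks in $X$. Combining this with the behaviour of the almost gradient field $\wt{X}$ along the fibres, one obtains the standard description: $\wt{d}$ is block lower-triangular with respect to the ordered decomposition $(C'', C')$ — that is, $\wt{d}(C') \subseteq C'$ up to the connecting terms, $\wt{d}(C'') \subseteq C'' \oplus C'$, and the diagonal blocks are each identified with the (local-system-twisted) pearl differential $d$ of $C(\mathscr{D})$, while the off-diagonal $C'' \to C'$ block is the connecting operator built from disks counted with a weight coming from the holonomy of $\wt{\rho}$ around the fibre. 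The key point for the local systems is that the fibrewise holonomy factor is the \emph{same} for the $x' \to x'$ and $x'' \to x''$ blocks (it is determined by the projected disk in $\Sigma$ together with the value of $\wt{\rho}$ on the fibre class, which is a fixed scalar), so both diagonal blocks equal the differential of $(L,\rho)$ for the appropriate restriction $\rho$ of $\wt{\rho}$; this is exactly what makes $i$ and $p$ chain maps.

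Granting this block structure, the verification is immediate: for $i$, one has $\wt{d}\circ i (x) = \wt{d}(x')$, which by the triangularity has no $C''$-component and whose $C'$-component is $(d x)'$, i.e. $\wt{d}\, i(x) = (dx)' = i(dx)$. For $p$, write $\wt{d}(x') = (dx)' + (\text{terms in } C')$ hence $p\,\wt{d}(x') = 0 = d\, p(x')$, and $\wt{d}(x'') = (dx)'' + (\text{connecting term in } C')$, so $p\,\wt{d}(x'') = dx = d\,p(x'')$. Thus $p \wt{d} = d p$ and $\wt{d} i = i d$.

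The main obstacle, and the step that requires genuine care rather than formal manipulation, is establishing the block-triangular form of $\wt{d}$ together with the identification of the diagonal blocks \emph{with the local systems inserted}. Concretely one must: (i) set up $\wt{f}$ and $\wt{X}$ so that the fibrewise part is a standard perfect Morse function on $S^1$ with its two critical points, making the index bookkeeping that produces the shift by $1$ work; (ii) use Proposition \ref{benefit SFT} plus a dimension/gluing argument to show that no pearl trajectory in $X$ goes from a $C'$-generator to a $C''$-generator; and (iii) check that when one decorates each disk by the holonomy of $\wt{\rho}$, the contributions reorganise exactly into $d$ twisted by the induced local system on $L$ (for the diagonal blocks) and into a well-defined operator $C'' \to C'$ (for the off-diagonal block, which will later be shown to compute the connecting map). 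Point (iii) is where this proof genuinely differs from \cite{[BK13]}; since for tori the local system is just a character of $\pi_1 = \Z^{\dim L}$ and the fibre contributes one extra generator of $\pi_1(\wt{L})$, the holonomy splits as a product of the base character and a fixed fibre scalar, which is what allows the argument of \cite[Section 7.3]{[BK13]} to go through verbatim once this splitting is recorded.
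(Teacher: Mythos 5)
Your block-decomposition strategy is in the right spirit -- it is the approach of \cite[Section 7.3]{[BK13]} -- but the crucial step where you identify the diagonal blocks of $\wt{d}$ with the twisted differential $d_\rho$ of $C(\mathscr{D})$ does not hold once local systems are inserted, and your proposal never uses the hypothesis (which is in force throughout Section \ref{quantum Gysin}) that $\rho$ and $\wt{\rho}$ are \emph{critical points} of $W_L$ and $W_{\wt{L}}$. That hypothesis is not decorative: it is the mechanism of the paper's proof, and its absence in yours is a genuine gap.

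Here is the issue concretely. Fix an index-$1$ critical point $x_j$ of $f$. Via {\BC}'s description (Proposition \ref{d and suppot} and Claim \ref{d and suppot lift}), the only potentially nonzero component of $d_\rho(x_j)$ is
\begin{equation*}
d_\rho(x_j)= z_j\,\frac{\partial W_L}{\partial z_j}\Big|_{\rho}\, x_{\min}\,t ,
\end{equation*}
whereas the $C'\!\to C'$ block of $\wt{d}$ applied to $x_j'$ is
\begin{equation*}
d_{\wt{\rho}}(x_j')= z_j\,\frac{\partial W_{\wt{L}}}{\partial z_j}\Big|_{\wt{\rho}}\, x_{\min}'\,t .
\end{equation*}
These involve \emph{different} superpotentials: $W_L$ is a function of the $k=n-1$ base variables while $W_{\wt{L}}$ is a genuinely different function of $n$ variables including the fibre variable $z_n$. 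The point you elide is that the Maslov-$2$ disks in $X$ with boundary on $\wt{L}$ have boundary classes with \emph{disk-dependent} fibre winding, so the holonomy weight $\wt{\rho}(\partial\wt{A})=\rho(\partial A)\cdot\wt{\rho}(\gamma_n)^{m_{\wt{A}}}$ does not pull out as a single ``fixed fibre scalar'' as you assert. For instance, for $(\C P^n,\C P^{n-1})$ with the Clifford tori one has $z_1\partial_{z_1}W_{T^n_{\text{Clif}}}=z_1-1/(z_1\cdots z_n)$ versus $z_1\partial_{z_1}W_{T^{n-1}_{\text{Clif}}}=z_1-1/(z_1\cdots z_{n-1})$: one of the two contributing disks acquires an extra $\wt{\rho}(\gamma_n)^{-1}$ in $X$ while the other does not. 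Hence at a generic pair $(\rho,\wt{\rho})$ the diagonal blocks do \emph{not} agree with $d_\rho$, and $\wt{d}\,i\neq i\,d$.

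What actually makes $i$ (and $p$) a chain map in the paper is that both expressions vanish: $\rho$ being a critical point of $W_L$ forces $\partial_{z_j}W_L|_\rho=0$, and $\wt{\rho}$ being a critical point of $W_{\wt{L}}$ forces $\partial_{z_j}W_{\wt{L}}|_{\wt{\rho}}=0$, so the chain-map identity on degree-$1$ generators reads $0=0$; the reduction to degree-$1$ generators and the index bookkeeping for $x'$ versus $x''$ are then routine. In other words, the paper does not (and cannot) prove the ``strong'' structural statement that the $C'\!\to C'$ block is literally $d_\rho$; it proves the weaker but sufficient statement that both differentials annihilate the relevant generators. Your proof as written would, if correct, establish the chain-map property for arbitrary $\rho$, $\wt{\rho}$, which is false -- that is the clearest symptom of the gap.
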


Once {\propo} \ref{chain prop} is proven, as the sequence
\begin{equation}
\begin{tikzcd}
0 \arrow[r,""] &  C(\mathscr{D} ) \arrow[r,"i_{}"] & C(\wt{\mathscr{D} })  \arrow[r,"p"] &  C(\mathscr{D} )\arrow[r,""] & 0
\end{tikzcd}
\end{equation}
is a short exact sequence, we get the following long exact sequence which we call the quantum Gysin sequence:

\begin{corol}[Quantum Gysin sequence] 
We have the following long exact sequence:
\begin{equation}\label{quantum Gysin sequence}
\begin{tikzcd}
\arrow[r,"\delta_{}"] & QH^{\ast} (L,\rho) \arrow[r,"i_{}"] & QH^{\ast} (\wt{L},\wt{\rho}) \arrow[r,"p"] & QH^{\ast-1} (L,\rho) \arrow[r,"\delta_{}"] &.
\end{tikzcd}
\end{equation}

\end{corol}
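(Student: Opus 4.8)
The plan is to obtain \eqref{quantum Gysin sequence} as a formal consequence of Proposition \ref{chain prop} together with the standard long exact sequence associated to a short exact sequence of cochain complexes. First I would check that
$$
0 \longrightarrow C(\mathscr{D}) \xrightarrow{\ i\ } C(\wt{\mathscr{D}}) \xrightarrow{\ p\ } C(\mathscr{D}) \longrightarrow 0
$$
is short exact. By construction the generators of $C(\wt{\mathscr{D}})$ fall into two families --- the ``primed'' generators $x'$ and the ``doubly primed'' generators $x''$ --- with $i$ sending each generator $x$ to the corresponding $x'$ and $p$ annihilating every $x'$ while sending every $x''$ back to $x$. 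Hence $i$ is injective, $p$ is surjective, and $\Ker p = \Span\{x'\} = \Im i$, so the sequence is exact. The one point that needs care is the internal grading: the $x'$ lie in the same degree as $x$ while the $x''$ are shifted, so that $i$ is degree-preserving and $p$ lowers degree by one; this graded structure is precisely what will force the periodic shift $QH^{\ast}\to QH^{\ast}\to QH^{\ast-1}$ in the resulting sequence.

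Next I would feed this short exact sequence into the zig-zag lemma (the long exact sequence of a short exact sequence of cochain complexes), producing a long exact sequence in cohomology with a connecting homomorphism $\delta$ whose degree is dictated by the grading fixed in the previous step. To read off the displayed form I would then invoke the two identifications of cohomology: $H^{\ast}(C(\mathscr{D}),d)\cong QH^{\ast}(L,\rho)$ by \BC's pearl theory, and $H^{\ast}(C(\wt{\mathscr{D}}),\wt d)\cong QH^{\ast}(\wt L,\wt\rho)$. The latter is the only non-formal input, and it has already been recorded above: although an admissible \acs{} $\wt J\in\wt{\mathcal{J}}$ is not generic, the differential $\wt d$ is nonetheless well defined and $(C(\wt{\mathscr{D}}),\wt d)$ has cohomology isomorphic to the pearl homology $QH^{\ast}(\wt L,\wt\rho)$ computed from a generic datum. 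Substituting these two identifications into the long exact sequence of the zig-zag lemma yields exactly \eqref{quantum Gysin sequence}.

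I expect the genuinely delicate part to lie not in this corollary --- which is purely homological once Proposition \ref{chain prop} is in hand --- but in the bookkeeping underlying the short exact sequence itself: one must know that the critical points of $\wt f$ on $\wt L$ organise into the two families $\{x'\}$, $\{x''\}$ in natural bijection with the generators of $C(\mathscr{D})$ and in the claimed relative degrees, so that $C(\wt{\mathscr{D}})$ really is, as a graded group, an extension of $C(\mathscr{D})$ by a degree-shifted copy of itself. This is where the circle-bundle structure $\wt L\to L$ enters, through a perfect Morse function on the $S^{1}$-fibre contributing one generator in each of two consecutive degrees above every generator downstairs. Once that structure is in place, the exactness claims and the passage to cohomology are routine, so I would keep this step brief and simply cite the standard homological algebra for the zig-zag lemma.
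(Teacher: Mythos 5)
Your proposal is correct and follows essentially the same route as the paper: establish that $i$ and $p$ are chain maps (Proposition 3.2.3), observe that $0\to C(\mathscr{D})\xrightarrow{i} C(\wt{\mathscr{D}})\xrightarrow{p} C(\mathscr{D})\to 0$ is short exact with the grading shift coming from the two families of lifted critical points $x'$, $x''$, and then invoke the zig-zag lemma together with the identification of $H^\ast(C(\wt{\mathscr{D}}),\wt d)$ with $QH^\ast(\wt L,\wt\rho)$ for admissible almost complex structures. The only difference is presentational: you spell out the verification of short exactness and the degree bookkeeping, which the paper leaves implicit.
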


\begin{proof}[Proof of {\propo} \ref{chain prop}]

We only prove it for $i$ as the argument is similar for $p$. We will prove 
\begin{equation}
    \begin{gathered}
        d_{\wt{\rho}} \circ i = i \circ d_{\rho} .
    \end{gathered}
\end{equation}
We will use the following useful description of the pearl differential for monotone {\lag}s that admit a perfect Morse function due to {\BC} \cite{[BC12]}. Recall that we work with (\lag) tori so this applies to our case. We start by explaining the setup. Let $f:L \to \R$ be a perfect Morse function where $L$ is a $k=n-1$-dimensional {\lag} torus. Denote its critical points of index $1$ by $\{x_1, x_2 ,\cdots , x_{k=n-1}\}$. Each critical point in $\{x_1, x_2 ,\cdots , x_{k=n-1}\}$ represents a class $\gamma_j \in H_1(L):=H_1(L;\Z)/ \text{Torsion}$ and moreover, $\{\gamma_j\}_{1 \leq j \leq k}$ provides a basis of $H_1(L)$. 

Denote the {\suppot} for $L$ by $W_L$
\begin{equation}
    \begin{gathered}
        W_L : \hom (H_1(L) ,\C^{\ast} ) \to \C \\
        W_L (\rho) := \sum_{\mu(A)=2        , A\in H_2 (\Sigma,L)} \rho(\partial A) \cdot  \# \mathcal{M}(J;A) 
    \end{gathered}
\end{equation}
and by using the basis of $H_1(L)$ given by $\{\gamma_j\}_{1 \leq j \leq k}$, we can express $W_L$ in the following way:
$$W_L : \C^{k} \to \C $$
\begin{equation}\label{suppot expression}
    \begin{aligned}
         W_L(z) & =  \sum_{\mu(A)=2,  A\in H_2 (\Sigma,L)}   z^{\partial A}   \#  \mathcal{M}(J;A) \\
          & = \sum_{\mu(A)=2, A\in H_2 (\Sigma,L) }  z_{1} ^{ (\partial A)_{1}}  z_{2} ^{ (\partial A)_{2}} \cdots  z_{k} ^{ (\partial A)_{k}}   \#  \mathcal{M}(J;A) 
    \end{aligned}
\end{equation}
where $z_j$ is a variable representing the loop $\gamma_j$ and $(\partial A)_j $ are integers determined by 
\begin{equation}
    \begin{gathered}
        \partial A = \sum_{1 \leq j \leq k=n-1 } (\partial A)_j  \cdot \gamma_j \in H_1(L) .
    \end{gathered}
\end{equation}

{\BC} proved that the pearl differential of $\{x_1, x_2 ,\cdots , x_{k=n-1}\}$ is expressed in a rather simple way using the {\suppot}.

\begin{prop}[{\cite[Proposition 3.3.1]{[BC12]}}]\label{d and suppot}
Take any index $1$ critical point $x_j $ of $f: L \to \R$. We have 
\begin{equation}
    \begin{gathered}
        d_{\rho} (x_j)= z_j \frac{\partial W_L}{\partial z_j} x_{\min} t 
    \end{gathered}
\end{equation}
where $x_{\min}$ denotes the minimizer of the function $f$.
\end{prop}

\begin{remark}

Strictly speaking, {\BC} deal with index $k-1$ critical points but the idea, which we will explain shortly, is essentially the same as they work with homology while we are considering its cohomological counterpart. 

\end{remark}

The idea behind {\propo} \ref{d and suppot} is that, for degree reason and that $f$ is a perfect Morse function, we have
\begin{equation}
    \begin{gathered}
        d_{\rho} (x_j)= \sum_{\mu(A)=2 } \rho(\partial A) \# \mathscr{P} (x_{\min} , x_j ;A ) x_{\min} t,
    \end{gathered}
\end{equation}
and because $x_{\min}$ is a minimum of the Morse function $f$ and $x_j$ presents the class $\gamma_j$, $\#  \mathscr{P} (x_{\min} , x_j ;A )$ counts Maslov $2$-disks of class $A$ whose boundary $\partial A$ has $\gamma_j$ components. Thus

\begin{equation}
    \begin{gathered}
       \#  \mathscr{P} (x_{\min} , x_j ;A ) = (\partial A)_j \cdot \# \mathcal{M}(J;A). 
    \end{gathered}
\end{equation}
Thus, by using the expression \eqref{suppot expression}, we have 

\begin{equation}\label{LHS}
    \begin{aligned}
        d_{\rho} (x_j)& = \sum_{\mu(A)=2 } \rho(\partial A) \# \mathscr{P} (x_{\min} , x_j ;A ) x_{\min} t \\
        &= \sum_{\mu(A)=2 }  z_{1} ^{ (\partial A)_{1}}  z_{2} ^{(\partial A)_{2}} \cdots  z_{k} ^{ (\partial A)_{k}} \cdot (A)_j \cdot \# \mathcal{M}(J;A)  x_{\min} t \\
        &= z_j \frac{\partial W_L}{\partial z_j} x_{\min} t . 
    \end{aligned}
\end{equation}

Now, we turn our focus to $(X, \wt{L})$ and study the index $1$ critical points of $\wt{f}: \wt{L} \to \R$ of the type
$x'$, which are precisely the lifts of the index $1$ critical points of $f: L \to \R$, namely $\{x_1 ' , x_2 ' ,\cdots , x_{k=n-1} '\}$. Note that there is another index $1$ critical point of $\wt{f}: \wt{L} \to \R$ but of the type
$x ''$, which is a lift of the index $0$ critical point of $f: L \to \R$, i.e. the minimum, but we are not interested in this. We will prove that an analogous result of Proposition \ref{d and suppot} continues to hold for $\{x_1 ' , x_2 ' ,\cdots , x_{k=n-1} '\}$, which does not hold for the other index $1$ critical point $x ''$.

\begin{claim}\label{d and suppot lift}
Take any index $1$ critical point $x_j '$. We have
\begin{equation}
    \begin{gathered}
        d_{\wt{\rho}} (x_j ')= z_j \frac{\partial W_{\wt{L}}}{\partial z_j} x_{\min} ' t .
    \end{gathered}
\end{equation}
\end{claim}

We will prove this claim. Notice that each of the critical points $\{x_1 ' , x_2 ' ,\cdots , x_{k=n-1} '\}$, just as $\{x_1, x_2 ,\cdots , x_{k=n-1}\}$, represent a class in $H_1 (\wt{L})$, which we will denote by $\wt{\gamma}_{j}$. Denote the class in $H_1 (\wt{L})$ represented by the fiber circle by $\wt{\gamma}_{n}$. Now, $\{\wt{\gamma}_{j}\}_{1\leq j \leq n}$ provides a basis for $H_1 (\wt{L})$. Now, the {\suppot} for $\wt{L}$ {\wrt} to this basis will be

$$W_{\wt{L}}  : \C^{n} \to \C $$
\begin{equation}\label{lift suppot}
    \begin{aligned}
         W_{\wt{L}}(z)& =  \sum_{\mu(A)=2,  A\in H_2 (X, \wt{L})}   z^{\partial A}   \cdot \#  \mathcal{M}(J;A) \\
         & = \sum_{\mu(A)=2, A\in H_2 (X, \wt{L}) }  z_{1} ^{ (\partial A)_{1}}  z_{2} ^{ (\partial A)_{2}} \cdots  z_{n} ^{ (\partial A)_{n}}   \#  \cdot \mathcal{M}(J;A) 
    \end{aligned}
\end{equation}
where complex variables $z_j$ correspond to the class $\wt{\gamma}_{j}$ and integers $(\partial A)_j $ are determined by 
\begin{equation}
    \begin{gathered}
        \partial A = \sum_{1 \leq j \leq n } (\partial A)_j  \cdot \wt{\gamma}_{j} \in H_1(\wt{L}) .
    \end{gathered}
\end{equation}

Once again, for degree reason, we have
\begin{equation}
    \begin{gathered}
       d_{\wt{\rho}}   (x_j ' )= \sum_{i(\wt{x})=2} \# \mathscr{P} (\wt{x} , x_j ' ;0 )\wt{x}  +\sum_{\mu(A)=2 } \rho(\partial A) \# \mathscr{P} (x_{\min} , x_j ' ;A ) x_{\min} t.
    \end{gathered}
\end{equation}
The difference between here and its counterpart for $L$ is that, as $\wt{f}$ is no longer a perfect Morse function, we need to take into account the first term. The critical points satisfying $i(\wt{x})=2$ are either of the type $x'$ with $i(x)=2$ or of the type $x''$ with $i(x)=1$. Thus,

\begin{equation}
    \begin{gathered}
        \sum_{i(\wt{x})=2} \# \mathscr{P} (\wt{x} , x_j ' ;0 )\wt{x} =  \sum_{i(x)=1} \# \mathscr{P} (x'' , x_j ' ;0 ) x'' + \sum_{i(x)=2} \# \mathscr{P} (x' , x_j ' ;0 ) x' .
    \end{gathered}
\end{equation}
As it is explained in \cite[Section 7.2, equations 13, 14]{[BK13]}, one can show that 
\begin{equation}
    \begin{gathered}
      \mathscr{P} (x'' , x_j ';0 ) = \emptyset 
    \end{gathered}
\end{equation}
and
\begin{equation}\label{section 3 eq 1}
    \begin{gathered}
      \# \mathscr{P} (x' , x_j ';0 )  = \# \mathscr{P} (x , x_j  ;0 ) =0 
    \end{gathered}
\end{equation}
where the last equality \eqref{section 3 eq 1} uses that $f:L \to \R$ is perfect. Thus, we conclude that 
\begin{equation}
    \begin{gathered}
       d_{\wt{\rho}}   (x_j ' )= \sum_{\mu(A)=2 } \rho(\partial A) \# \mathscr{P} (x_{\min} , x_j ' ;A ) x_{\min} t.
    \end{gathered}
\end{equation}
Now, because $x_{\min} '$ is a minimum of the Morse function $\wt{f}$ and $x_j '$ presents the class $\wt{\gamma}_j$, $\#  \mathscr{P} (x_{\min} ' , x_j ' ;A )$ counts Maslov $2$-disks of class $A$ whose boundary $\partial A$ has $\wt{\gamma}_j$ components. Thus

\begin{equation}
    \begin{gathered}
       \#  \mathscr{P} (x_{\min}' , x_j ' ;A ) = (\partial A)_j \cdot \# \mathcal{M}(J;A). 
    \end{gathered}
\end{equation}
Thus, from \eqref{lift suppot}, we have 
\begin{equation}
    \begin{gathered}
        d_{\wt{\rho}} (x_j ')= z_j \frac{\partial W_{\wt{L}}}{\partial z_j} x_{\min} ' t .
    \end{gathered}
\end{equation}
which proves Claim \ref{d and suppot lift}.

Now, we are ready to prove Proposition \ref{chain prop}. As $H^\ast(\wt{L})=H^\ast(T^n)$ is generated by $H^1(\wt{L})=H^1(T^n)$ and $x_j$ represents a class $\gamma_j$ where $\{\gamma_j\}_{1 \leq j \leq k}$ forms a basis of $H_1(L)$, it is sufficient to prove
\begin{equation}
    \begin{gathered}
       i \circ d_{\rho} (x_j) = d_{\wt{\rho}} \circ i (x_j)
    \end{gathered}
\end{equation}
for any index $1$ critical point $x_j$ of $f$. From {\propo} \ref{d and suppot}, the left hand side is 
\begin{equation}
    \begin{gathered}
       i \circ d_{\rho} (x_j) = i( z_j \frac{\partial W_L}{\partial z_j} x_{\min} t ) =0
    \end{gathered}
\end{equation}
as we have taken a local system $\rho$ that is a critical point of the {\suppot} $W_L$. Next, Claim \ref{d and suppot lift}, the right hand side is 
\begin{equation}
    \begin{gathered}
      d_{\wt{\rho}} \circ i (x_j)= d_{\wt{\rho}} (x_j ') = z_j \frac{\partial W_{\wt{L}}}{\partial z_j} x_{\min} ' t =0
    \end{gathered}
\end{equation}
as we have taken a local system $\wt{\rho}$ that is a critical point of the {\suppot} $W_{\wt{L}}$. Thus, 
\begin{equation}\label{index 1 commute}
    \begin{gathered}
       i \circ d_{\rho} (x_j) = d_{\wt{\rho}} \circ i (x_j) =0
    \end{gathered}
\end{equation}
for any the index $1$ critical point $x_j$ of $f: L \to \R$. This completes the proof of {\propo} \ref{chain prop}.
\end{proof}

\subsection{The connecting map of the quantum Gysin sequence}\label{The connecting map}

In this section we prove that the connecting map $QH(L,\rho) \to QH(L,\rho)$ in quantum Gysin sequence for the {\lag} torus $L$ is a zero map.

\begin{prop}\label{connecting map zero}
The connecting map $\delta$ of the quantum Gysin sequence 
\begin{equation*}
\begin{tikzcd}
\arrow[r,"\delta_{}"] & QH^{\ast} (L,\rho) \arrow[r,"i_{}"] & QH^{\ast} (\wt{L},\wt{\rho}) \arrow[r,"p"] & QH^{\ast-1} (L,\rho) \arrow[r,"\delta_{}"] &.
\end{tikzcd}
\end{equation*}
is a zero map, i.e. 
$$\delta = 0.$$
\end{prop}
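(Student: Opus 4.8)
The plan is to exploit the quantum Gysin long exact sequence together with the identification of the Floer cohomologies as Clifford-type algebras coming from nondegenerate critical points of the superpotentials. Since $\rho$ is a nondegenerate critical point of $W_L$, the Lagrangian quantum cohomology $QH^\ast(L,\rho)$ is nonzero — in fact, for a monotone Lagrangian torus of dimension $k=n-1$, a nondegenerate critical point of the superpotential forces $QH^\ast(L,\rho)$ to have total rank equal to $\dim H^\ast(T^{n-1}) = 2^{n-1}$ (the Floer differential vanishes and the whole Morse cohomology survives). Likewise $\widetilde\rho$ being a nondegenerate critical point of $W_{\widetilde L}$ forces $QH^\ast(\widetilde L,\widetilde\rho)$ to have total rank $2^n$. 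So the strategy is purely a rank/Euler-characteristic count forcing $\delta=0$.

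First I would record the exactness of \eqref{quantum Gysin sequence} and note that $QH^\ast(\widetilde L,\widetilde\rho)$ sits in a short exact sequence
\begin{equation*}
0 \to \operatorname{coker}\delta \to QH^\ast(\widetilde L,\widetilde\rho) \to \ker\delta \to 0
\end{equation*}
where $\ker\delta, \operatorname{coker}\delta$ are subquotients of $QH^\ast(L,\rho)$ (with the appropriate degree shift). Hence the total rank of $QH^\ast(\widetilde L,\widetilde\rho)$ is at most $2\cdot(\text{total rank of }QH^\ast(L,\rho)) = 2\cdot 2^{n-1} = 2^n$, with equality if and only if $\delta=0$. Since the nondegeneracy hypothesis on $\widetilde\rho$ gives total rank exactly $2^n$ on the left, equality holds and therefore $\delta=0$.

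The key ingredient I need is the statement that a nondegenerate critical point of the superpotential of a monotone Lagrangian torus $T^m$ makes the pearl (Floer) differential vanish, so that $QH^\ast(T^m,\text{crit pt})\cong H^\ast(T^m;\Lambda)$ has rank $2^m$. This is standard — it follows from Proposition \ref{d and suppot} (and Claim \ref{d and suppot lift} in the lifted case), which expresses the differential of the degree-$1$ generators in terms of $z_j\,\partial W/\partial z_j$; at a critical point these all vanish, and since $H^\ast$ is generated in degree $1$ the full differential vanishes. The main obstacle is just to make sure the degree shift $QH^{\ast-1}(L,\rho)$ in the connecting map is bookkept correctly so that the rank count is literally $2\cdot 2^{n-1}$ and not something mismatched by parity; but since we only use total ranks (summed over all degrees), the shift is harmless and the count goes through cleanly. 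I would then remark that, as a byproduct, $i$ is injective and $p$ is surjective, which is what is used downstream in the filtered Floer--Gysin sequence.
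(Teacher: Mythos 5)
Your proposal is correct in spirit and reaches the right conclusion, but it takes a genuinely different route from the paper. The paper never touches ranks. Instead it uses the module property $\delta(a\cdot b)=\delta(a)\cdot b$ (inherited from \cite{[BK13]}) to reduce to showing $\delta(1_L)=0$, and then computes $\delta(1_L)$ explicitly in the pearl complex: it identifies $\delta(1_L) = e(\wt{L}) + z_n\frac{\partial W_{\wt{L}}}{\partial z_n}\,1_L\,t$, where $e(\wt{L})$ is the classical Euler class of the circle bundle $\wt{L}\to L$ (which vanishes for a torus over a torus), and the second term vanishes because $\wt{\rho}$ is a critical point of $W_{\wt{L}}$. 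That computation only uses the critical point hypothesis for $\wt{\rho}$, not for $\rho$, and moreover identifies the connecting map with a ``quantum Euler class'' --- a meaningful geometric object.

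Your argument is structurally sound and arguably cleaner: once one knows $\dim_\Lambda QH^*(L,\rho)=2^{n-1}$ and $\dim_\Lambda QH^*(\wt{L},\wt{\rho})=2^n$, the rank count in the long exact sequence forces $\delta=0$ (and, as you note, the degree shift is immaterial when summing total rank). The place where you are leaning on an unproved input is the ``Floer differential vanishes at a critical point'' step. Proposition~\ref{d and suppot} only controls $d$ on the index-$1$ generators; to conclude that $d$ vanishes on the whole complex you must invoke the fact that the pearl differential is a derivation of the Lagrangian quantum product and that the complex is multiplicatively generated in degree~$1$ (the upper-triangular deformation of the cup product). This is true, and is established in Biran--Cornea's work, but you should say so explicitly rather than assert it as obvious; without the derivation property nothing prevents $d$ from being nonzero on, say, $x_ix_j$. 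There is one further detail worth flagging: for $\wt{L}$ the chain complex is built from the non-generic admissible almost complex structure of Biran--Khanevsky, so the pearl differential there is not literally the one in Proposition~\ref{d and suppot}; one needs the paper's observation that its cohomology is nonetheless isomorphic to the generic pearl homology, so the rank count still applies. Once these two points are supplied, your proof is complete and gives an alternative to the paper's explicit computation, at the price of requiring the critical point hypothesis for both $\rho$ and $\wt{\rho}$, whereas the paper's proof uses only the latter.
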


\begin{proof}[Proof of {\propo} \ref{connecting map zero}]

    The connecting map satisfies the following (\cite{[BK13]}): for classes $a,b \in QH(L,\rho)$, 
    \begin{equation}
    \begin{gathered}
       \delta(a \cdot b) = \delta(a ) \cdot b.
    \end{gathered}
\end{equation}

    Thus, in order to prove {\propo} \ref{connecting map zero}, it suffices to prove that 
    \begin{equation}
    \begin{gathered}
        {e}_{q}:=\delta(1_{L})  =0.
    \end{gathered}
\end{equation}
    We work in the same setup as in {\propo} \ref{chain prop} and use the same notation. By the definition of the connecting map and that the class $1_{L}$ is represented by the unique minimum $x_{\min}$ of the function $f$, we have the following:

    \begin{equation}
        \begin{aligned}
            {e}_{q} & = \delta (1_L) \\
            & = [i^{-1} \circ d_{\wt{\rho}} \circ p^{-1} (x_{\min})] \\
           & = [i^{-1} \circ d_{\wt{\rho}} (x_{\min} '')] \\
           & = [i^{-1} (\sum_{A , x'} \wt{\rho}(\partial A) \# \mathscr{P}( x ' ,  x_{\min} '' ;A) x ' t^{\ovl{\mu}(A)} 
           +  \sum_{A , x''} \wt{\rho}(\partial A) \# \mathscr{P}( x '' ,  x_{\min} '' ;A) x '' t^{\ovl{\mu}(A)}    )]  \\
        &    = [i^{-1} (\sum_{A , x'} \wt{\rho}(\partial A) \# \mathscr{P}( x ' ,  x_{\min} '' ;A) x  t^{\ovl{\mu}(A)}  ) ]
        \end{aligned}
    \end{equation}
    where the final equality uses the definition of the map $i$. For degree reasons, we have
    \begin{equation}
        \begin{aligned}
          \ & \sum_{A , x'} \wt{\rho}(\partial A) \# \mathscr{P}( x ' ,  x_{\min} '' ;A) x ' t^{\ovl{\mu}(A)} \\
           & = \sum_{ i(x')=2 }  \# \mathscr{P}( x ' ,  x_{\min} '' ;0) x ' + \sum_{\mu(A)=2 , i(x')=0} \wt{\rho}(\partial A) \# \mathscr{P}( x ' ,  x_{\min} '' ;A) x ' t^{} \\
           & = \sum_{ i(x')=2 }  \# \mathscr{P}( x ' ,  x_{\min} '' ;0) x ' + \left( \sum_{\mu(A)=2} \wt{\rho}(\partial A) \# \mathscr{P}( x_{\min} ' ,  x_{\min} '' ;A)\right) x_{\min} ' t^{}
        \end{aligned}
    \end{equation}
    where the final equality uses that $x_{\min} '$ is the unique index $0$ critical point of $\wt{f}$. Thus, 
    \begin{equation}
        \begin{aligned}
            {e}_{q} & = [i^{-1} (\sum_{ i(x')=2 }  \# \mathscr{P}( x ' ,  x_{\min} '' ;0) x ')] + [i^{-1} (\sum_{\mu(A)=2} \wt{\rho}(\partial A) \# \mathscr{P}( x_{\min} ' ,  x_{\min} '' ;A) x_{\min} ' t^{})] \\
            & = e(\wt{L}) + \left( \sum_{\mu(A)=2} \wt{\rho}(\partial A) \# \mathscr{P}( x_{\min} ' ,  x_{\min} '' ;A) \right)  1_L t
        \end{aligned}
    \end{equation}
  where we used the Morse theoretic interpretation of the classical Gysin sequence ($e(\wt{L})$ denotes the classical Gysin class for the $S^1$-bundle $\wt{L} \to L$) and $1_L= [x_{\min}] $. We know that $e(\wt{L})=0$ (as $\wt{L}$ and $L$ are tori), and 
  \begin{equation}
        \begin{gathered}
             \sum_{\mu(A)=2} \wt{\rho}(\partial A) \# \mathscr{P}( x_{\min} ' ,  x_{\min} '' ;A) = z_n \frac{\partial W_{\wt{L}}}{\partial z_n} 
        \end{gathered}
    \end{equation}
  as similar reasons to what we have discussed in the proof of {\propo} \ref{chain prop}: because $x_{\min} '$ is a minimum of the Morse function $\wt{f}$ and the Morse trajectory of $\wt{f}$ between $x_{\min} ''$ and $x_{\min} '$ presents the class $\wt{\gamma}_n$ of the fiber circle, $\#  \mathscr{P} (x_{\min} ' , x_{\min} '' ;A )$ counts Maslov $2$-disks of class $A$ whose boundary $\partial A$ has $\wt{\gamma}_n$ components. Thus

\begin{equation}
    \begin{gathered}
       \#  \mathscr{P} (x_{\min}' , x_{\min} '' ;A ) = (\partial A)_n \cdot \# \mathcal{M}(\wt{J};A), 
    \end{gathered}
\end{equation}
  which implies
  \begin{equation}
        \begin{aligned}
             \sum_{\mu(A)=2} \wt{\rho}(\partial A) \# \mathscr{P}( x_{\min} ' ,  x_{\min} '' ;A) & =\sum_{\mu(A)=2} \wt{\rho}(\partial A) (\partial A)_n \cdot \# \mathcal{M}(\wt{J};A) \\
            & =  z_n \frac{\partial W_{\wt{L}}}{\partial z_n} .
        \end{aligned}
    \end{equation}
    Thus, 
  \begin{equation}
        \begin{gathered}
            {e}_{q} = z_n \frac{\partial W_{\wt{L}}}{\partial z_n}  1_L t =0
        \end{gathered}
    \end{equation}
    as we chose a local system that is a critical point of the {\suppot} $W_{\wt{L}}$. This completes the proof of {\propo} \ref{connecting map zero}.

\end{proof}

\subsection{Floer--Gysin sequence}\label{Floer--Gysin sequence}

The aim of this section is to construct a self-Floer theoretic analogy, which we refer to the Floer--Gysin sequence, of the quantum Gysin sequence we have seen in Section \ref{quantum Gysin}.

First of all, notice that the lifted {\hamil}s considered by Borman \eqref{Borman Ham lift} are {\degen} ({\hamil} chords of $h\cdot \pi^\ast H$ from $\wt{L}$ to itself appear in $S^1$-families) and thus are not suitable for Floer theory. Thus, we slightly perturb them in the following way to make them non-{\degen}:
    \begin{equation}\label{Ham lift nondeg}
        \wt{H}:= \left( \eps h(r) \sum_{j} (\nu_j \circ \pi) f(\theta) \right) \wedge \left( h(r) (H\circ \pi) \right)
    \end{equation}
    which is a concatenation of 
    \begin{subequations}
    \begin{gather}
       h (r) (H\circ \pi)\\
        F(x):=\eps h(r) \sum_{j} (\nu_j \circ \pi) f(\theta)
    \end{gather}
    \end{subequations}
    where $f:S^1 \to \R$ is a Morse function that has precisely two critical points. Recall that $h:[0,1) \to \R$ is a decreasing function with $h(0)=1$ and is constantly zero away from a neighborhood of $r_0 \in (0,1)$. Note that the $S^1$-action is only defined on $X \backslash \Sigma \backslash \Delta \simeq D \Sigma \backslash \Sigma$, so the $S^1$-coordinate $\theta$ only makes sense on $X \backslash \Sigma \backslash \Delta$ but thanks to our choice of $h_1$, the function $h_1(r) f(\theta) $ makes sense on the entire $X$. Remember that even though $\wt{H}$ is defined on $D\Sigma \simeq X \backslash \Delta$, by our choice of $h:[0,1) \to \R$ (see Section \ref{Borman's reduction result}), it extends smoothly to $X$. We will study what the one-periodic orbits of $\wt{H}$ look like. 
    
    The {\symp} form $\omega_{D\Sigma}$ is defined by
    \begin{equation}
        \begin{aligned}
            \omega_{D\Sigma}:& = -d((1-r^2)\alpha)\\
         & = (1-r^2)\pi^\ast \omega_{\Sigma} + 2r dr \wedge \alpha . \\  
        \end{aligned}
    \end{equation}
    
    Note that we have the following equivalence:
    \begin{subequations}
    \begin{gather}
        \pi^\ast \omega_{\Sigma} = - d \alpha \\
         \omega_{\Sigma} = -\pi_\ast d \alpha
    \end{gather}
    \end{subequations}
    The {\hamil} vector fields of $h(||\cdot ||) \pi^\ast H$ and $F$ (around the critical points) are as follows:
    \begin{subequations}\label{ham vect field}
    \begin{gather}
        X_{h(||\cdot ||) \pi^\ast H} :=\frac{h'(r)}{2r}R_{\alpha} + \frac{h(r)}{1-r^2} \pi^\ast X_H\\
        X_{F} :=\frac{h'(r)}{2r}f(\theta)R_{\alpha} + f'(\theta)\frac{h(r)}{2r} \partial_r.
    \end{gather}
    \end{subequations}
    Thus, we get 
    \begin{equation}
        \phi_{\wt{H}} (\wt{L}) \cap \wt{L}= \bigcup_{j} \{x_j ' , x_j '' \}
    \end{equation}
    where 
    $$\pi(x_j ')=\pi(x_j '') = x_j,\ \phi_H (L) \cap L = \bigcup_j \{x_j\} . $$

Following \cite{[Bor12]}, we take 
$$h(r) = 1-r^2$$
for $r \in [0,1-\eps]$ for a sufficiently small $\eps>0$ so that we have
$$\pi_\ast X_{h(||\cdot ||) \pi^\ast H} = X_H .$$
 
Thus, the generators of $CF_{X} (\wt{L},\wt{H}) $ are the following capped {\hamil} chords:
\begin{equation}
        \begin{gathered}
           [ \phi_{\wt{H}} ^{t} (x_j ') , \wt{u} ],\\
           [ \phi_{\wt{H}} ^{t} (x_j '') , \wt{u} ]
        \end{gathered}
    \end{equation}
    where $\wt{u}$ is the capping of the {\hamil} chord $\phi_{\wt{H}} ^{t} (x_j '),\phi_{\wt{H}} ^{t} (x_j '')$ that satisfy
    \begin{equation}
        \begin{gathered}
          \mu ([ \phi_{\wt{H}} ^{t} (x_j ') , \wt{u} ]) = \mu ([ \phi_{H} ^{t} (x_j ) , u ]),\\
          \mu ([ \phi_{\wt{H}} ^{t} (x_j '') , \wt{u} ]) = \mu ([ \phi_{H} ^{t} (x_j ) , u ]) +1 .
        \end{gathered} 
    \end{equation} 
    We explain this a bit more in detail. As the chord $\phi_{\wt{H}} ^{t} (\wt{x_j} )$ is a concatenation of the chords $\phi_F ^t(x_j ')$ and $\phi_{h\cdot \pi^\ast H} ^{t} (x_j ')$, where the former is a constant chord, it is geometrically identical to the latter. Recall that  $\phi_{h\cdot \pi^\ast H} ^{t} (x_j ')$ is contained in $F(D\Sigma_{r_0})$. One can take a capping $\wt{u} $ of $\phi_{h\cdot \pi^\ast H} ^{t} (x_j ')$ contained in $F(D\Sigma_{r_0})$ that satisfies
    $$ \mu( [ \phi_{h\cdot \pi^\ast H} ^{t} (x_j ') , \wt{u} ] ) = \mu ([ \phi_{H} ^{t} (x_j ) , u ]) . $$
   Thus, 
  \begin{equation}
        \begin{aligned}
          \mu ([ \phi_{\wt{H}} ^{t} (x_j ') , \wt{u} ]) & = \mu ([ \phi_{H} ^{t} (x_j ) , u ])+ i_{Morse}(x_j ') \\
          & = \mu ([ \phi_{H} ^{t} (x_j ) , u ])+0,\\
          \mu ([ \phi_{\wt{H}} ^{t} (x_j '') , \wt{u} ]) & = \mu ([ \phi_{H} ^{t} (x_j ) , u ]) + i_{Morse}(x_j '') \\
          & = \mu ([ \phi_{H} ^{t} (x_j ) , u ])+1 .
        \end{aligned} 
    \end{equation}

    \begin{remark}
   We will estimate the actions of the lifted {\hamil} chords $[ \phi_{\wt{H}} ^{t} (\wt{x_j} ) , \wt{u} ]$ even though it will not be useful until Section \ref{Filtered Floer--Gysin sequence}. First of all, the action of $[\wt{z}, \wt{u}]:= [ \phi_{h\cdot \pi^\ast H} ^{t} (x_j ') , \wt{u} ] $ satisfies the following:
    
    \begin{equation}
    \begin{aligned}
         \mathscr{A}_{\wt{L} , h\cdot \pi^\ast H} ([\wt{z}, \wt{u}])& =\int_0 ^1 h\cdot \pi^\ast H (\wt{z}(t)) dt - \int_{D_+} \wt{u}^{\ast} \omega_{D\Sigma}\\
        &=\int_0 ^1 (h \cdot \pi ^{\ast} H_t ) (\wt{z}(t)) dt - \int_{D_+} \wt{u}^{\ast} ((1-r^2)\pi^\ast \omega_{\Sigma} + 2r dr \wedge \alpha) \\
        &= \int_0 ^1 (1-r_0 ^2)  H_t (z(t)) dt - \int_{D_+} ( (1-r_0 ^2) (\wt{u} \circ \pi )^\ast \omega_{\Sigma} +  \wt{u}^{\ast}(2r dr \wedge \alpha) ) \\
        &= (1-r_0 ^2)  \int_0 ^1   H_t (z(t)) dt - \int_{D_+}  u^\ast \omega_{\Sigma} - \int_{D_+} \wt{u}^{\ast}( 2r dr \wedge \alpha) \\
        &= (1-r_0 ^2) \mathscr{A}_{L,H} ([z,u]),
    \end{aligned}
    \end{equation}
    where $[z,u]:=[\phi_H ^t (x_j), u ] $. Thus, the actions of $ [ \phi_{h\cdot \pi^\ast H} ^{t} (x_j ') , \wt{u} ] $ and $ [ \phi_{h\cdot \pi^\ast H} ^{t} (x_j '') , \wt{u} ] $ satisfy
    
    \begin{equation}\label{action estimate}
    \begin{gathered}
         \mathscr{A}_{\wt{L} , \wt{H} } ([ \phi_{\wt{H}} ^{t} (x_j ') , \wt{u} ] )=
         (1-r_0 ^2) \mathscr{A}_{L,H} ([z,u]) + \eps h(r_0) f(\theta ') , \\
        \mathscr{A}_{\wt{L} , \wt{H} } ([ \phi_{\wt{H} } ^{t} (x_j '') , \wt{u} ] )=
         (1-r_0 ^2) \mathscr{A}_{L,H} ([z,u]) +\eps h(r_0) f(\theta'') 
    \end{gathered}
    \end{equation}
    where $\theta'$ and $\theta ''$ are points that satisfy the following:
    \begin{equation}\label{action estimate}
    \begin{gathered}
          \min_{\theta \in S^1} f(\theta) = f(\theta '),\\
           \max_{\theta \in S^1} f(\theta) = f(\theta '') .
    \end{gathered}
    \end{equation}
    \end{remark}

We are now ready to define the Floer--Gysin sequence. We will introduce maps $i_{Fl},p_{Fl}$, which are analogies of the maps $i,p$ in the pearl case; compare with Definition \ref{chain prop}.

\begin{definition}\label{def chain prop Floer}
We define $i_{Fl}$ as 
\begin{equation}
    \begin{aligned}
        i_{Fl} : CF^{\ast} (L,H)  & \to CF^{\ast} (\wt{L},\wt{H})  \\
        [ \phi_{H} ^{t} (x_j ) , u ] & \mapsto [ \phi_{\wt{H}} ^{t} (x_j ') , \wt{u} ]
    \end{aligned}
\end{equation}
where $\wt{u} $ is a disk chosen above, i.e. a disk that makes 
\begin{equation}
    \begin{gathered}
       \mu ([ \phi_{H} ^{t} (x_j ) , u ] ) = \mu ([ \phi_{\wt{H}} ^{t} (x_j ') , \wt{u} ]),
    \end{gathered}
\end{equation}
and $p_{Fl}$ as
\begin{equation}
    \begin{aligned}
         p_{Fl}: CF^{\ast} (\wt{L},\wt{H}) & \to CF^{\ast-1} (L,H)  \\
          [ \phi_{\wt{H}} ^{t} (x_j ' ) , \wt{u} ] & \mapsto 0  ,\\
          [ \phi_{\wt{H}} ^{t} (x_j '') , \wt{u} ] & \mapsto [ \phi_{H} ^{t} (x_j ) , u ]  .
    \end{aligned}
\end{equation}
where $u = \pi (\wt{u})$.
\end{definition}

By arguing as \cite{[BK13]}, one can check that $i_{Fl},p_{Fl}$ are chain maps, and moreover, by considering the lifted PSS map
\begin{equation}
    \begin{aligned}
        \wt{PSS}:  C(\mathscr{D}) & \to CF_X  (\wt{L} , \wt{H}) \\
        x' & \mapsto  \sum_{\mu(\mathcal{T})=0 } \# \mathscr{P}( \phi_{\wt{H}} ^t (y' ), x'  ; \mathcal{T} ) \phi_{\wt{H}} ^t (y' ) t^{\ovl{\mu} (\mathcal{T}) } ,\\
        x'' & \mapsto   \sum_{\mu(\mathcal{T})=0 } \# \mathscr{P}( \phi_{\wt{H}} ^t (y' ), x''  ; \mathcal{T} ) \phi_{\wt{H}} ^t (y' ) t^{\ovl{\mu} (\mathcal{T}) } \\
        & +  \sum_{\mu(\mathcal{T})=0 } \# \mathscr{P}( \phi_{\wt{H}} ^t (y'' ), x''  ; \mathcal{T} ) \phi_{\wt{H}} ^t (y'' ) t^{\ovl{\mu} (\mathcal{T}) } ,
    \end{aligned}
    \end{equation}
we can see that quantum and Floer Gysin sequences are compatible.

\begin{prop}\label{compatibility chain level}
The following diagram commutes:

\begin{equation}
\begin{tikzcd}
\arrow[r,"\delta"] & C( \mathscr{D}) \arrow[r,"i"] \arrow[d,"PSS"] & C(\wt{\mathscr{D}})  \arrow[r,"p"] \arrow[d,"\wt{PSS}"] & C( \mathscr{D}) \arrow[d,"PSS"] \arrow[r,"\delta"]  & \ \\
\arrow[r,"\delta_{Fl}"] & CF_{\Sigma}  (L,H) \arrow[r,"i_{Fl}"] & CF_{X}  (\wt{L},\wt{H}) \arrow[r,"p_{Fl}"] & CF_{\Sigma} (L,H) \arrow[r,"\delta_{Fl}"] &.
\end{tikzcd}
\end{equation}

\end{prop}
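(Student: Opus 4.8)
The statement to prove is Proposition~\ref{compatibility chain level}, the commutativity of a ladder whose rungs are the (lifted) PSS maps and whose rows are the pearl-theoretic Gysin sequence and the Floer--Gysin sequence. The proof will be a diagram chase of chain-level maps, and the strategy is to verify commutativity square-by-square rather than all at once. I would organize the argument into three parts: (i) the left and right squares $PSS \circ \delta = \delta_{Fl} \circ PSS$ and $p_{Fl} \circ \wt{PSS} = PSS \circ p$; (ii) the middle square $\wt{PSS} \circ i = i_{Fl} \circ PSS$; and (iii) the observation that $\delta$ and $\delta_{Fl}$ are already understood (by Proposition~\ref{connecting map zero}, $\delta=0$ on the pearl side, and one should show $\delta_{Fl}=0$ as well, or at least that the two connecting maps correspond), so the ladder's commutativity reduces to (i)--(ii).

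\textbf{The squares involving $i$ and $p$.} For the middle square, both $\wt{PSS}\circ i$ and $i_{Fl}\circ PSS$ send a pearl generator $x'$ of $C(\mathscr{D})$ (a lift of a critical point $x$ of $f$) to a combination of capped {\hamil} chords for $(\wt{L},\wt{H})$. On the one hand $i(x)=x'$ and $\wt{PSS}(x')$ counts half-pearl trajectories $\mathscr{P}(\phi_{\wt{H}}^t(y'),x';\mathcal{T})$ landing on $x'$; on the other hand $PSS(x)$ counts $\mathscr{P}(\phi_H^t(y),x;\mathcal{T})$ in $\Sigma$ and then $i_{Fl}$ lifts the resulting chords to the $x'$-type chords with the capping $\wt u$ chosen to preserve the Maslov index. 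The identity of the two then follows from exactly the correspondence already established in Section~\ref{quantum Gysin}: for an admissible {\acs} $\wt J\in\wt{\mathcal{J}}$, every $\wt J$-{\holo} disk lives in $F((D\Sigma)_{r_0+\kappa})$ (Proposition~\ref{benefit SFT}), the projection $\pi$ is $(\wt J,J)$-{\holo}, and the $y'$-type (resp.\ $x'$-type) objects project bijectively onto the $y$-type (resp.\ $x$-type) objects with the moduli spaces matching up — this is the same bookkeeping that gave $\mathscr{P}(x'',x_j';0)=\emptyset$ and $\#\mathscr{P}(x',x_j';0)=\#\mathscr{P}(x,x_j;0)$ there, now applied to mixed pearl/Floer trajectories. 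For the right square, $p$ kills $x'$-type generators and sends $x''\mapsto x$; $\wt{PSS}(x'')$ has two sums, over $y'$-type and $y''$-type endpoints, and the point is that $p_{Fl}$ then kills the $\phi_{\wt H}^t(y')$ contributions and maps the $\phi_{\wt H}^t(y'')$ contributions down to $\phi_H^t(y)$-chords, matching $PSS(p(x''))=PSS(x)$; again this uses the projection of pearl trajectories and the index-shift identities $\mu([x_j'',\wt u])=\mu([x_j,u])+1$, $i_{Morse}(x'')=1$.

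\textbf{The connecting maps and the main obstacle.} With the $i$- and $p$-squares in hand, the left and right squares $PSS\circ\delta=\delta_{Fl}\circ PSS$ follow formally: once two consecutive squares of a map of short exact sequences commute and the PSS maps are isomorphisms on (co)homology, the connecting-map square commutes by the naturality of the snake lemma; at chain level one can either arrange $i_{Fl},p_{Fl}$ to literally split (as $i,p$ do) so $\delta_{Fl}$ is defined by the same $i_{Fl}^{-1}\circ(\cdot)\circ p_{Fl}^{-1}$ formula and chase it directly, or pass to cohomology and invoke Proposition~\ref{connecting map zero} to see both composites are zero. I expect the genuinely delicate step to be the middle square at the \emph{chain} level with an admissible, \emph{non-generic} {\acs} $\wt J$: as {\BK} emphasize, transversality is not automatic, so one must argue — as they do for $\wt d$ — that the relevant mixed moduli spaces of pearl trajectories with boundary on $\wt L$ and asymptotics the lifted {\hamil} chords are still cut out transversally (or can be perturbed within the admissible class without creating curves that escape $F((D\Sigma)_{r_0+\kappa})$), so that the counts defining $\wt{PSS}$ make sense and agree with the projected counts in $\Sigma$. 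Everything else is the same neck-stretching/projection package from Section~\ref{quantum Gysin} together with routine index bookkeeping using \eqref{action estimate} and the Morse indices $i_{Morse}(x')=0$, $i_{Morse}(x'')=1$.
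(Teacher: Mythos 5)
Your proposal takes essentially the same approach the paper intends. The paper itself does not write out a proof of Proposition~\ref{compatibility chain level}: it defines the lifted PSS map $\wt{PSS}$ by the displayed pearl-type counts, asserts that $i_{Fl},p_{Fl}$ are chain maps ``by arguing as \cite{[BK13]},'' and then states the commutativity without further argument. Your square-by-square chase via the projection $\pi$ being $(\wt{J},J)$-holomorphic on $F((D\Sigma)_{r_0+\kappa})$, the $y'/y''$ bookkeeping, and the index-shift identities, is precisely the content the paper is outsourcing to the \BK{} machinery, and your caveat about transversality for the admissible but non-generic $\wt J$ correctly identifies the genuine technical point that \cite{[BK13]} handles and that the paper implicitly inherits. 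The one place to be slightly more careful is the $\delta$-squares: as you note, the connecting map is only well defined at the chain level because the short exact sequences split canonically (via the $x\mapsto x'$, $x''\mapsto x$ identifications), and for the purposes of the paper one anyway immediately passes to homology and uses $\delta=0$, so this does not affect the downstream use.
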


This immediately implies the following.

\begin{corol}\label{compatibility homology level}
The following diagram commutes:

\begin{equation}\label{Floer-Gysin seq 2}
\begin{tikzcd}
\arrow[r,"\delta"] & QH^{\ast}(L) \arrow[r,"i"] \arrow[d,"PSS"] & QH^{\ast}(\wt{L}) \arrow[r,"p"] \arrow[d,"\wt{PSS}"] & QH^{\ast -1}(L) \arrow[d,"PSS"] \arrow[r,"\delta"]  & \ \\
\arrow[r,"\delta_{Fl}"] & HF_{\Sigma}  (L,H) \arrow[r,"i_{Fl}"] & HF_{X}  (\wt{L},\wt{H}) \arrow[r,"p_{Fl}"] & HF_{\Sigma} (L,H) \arrow[r,"\delta_{Fl}"] &.
\end{tikzcd}
\end{equation}
\end{corol}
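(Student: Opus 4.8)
The plan is to deduce Corollary~\ref{compatibility homology level} as an immediate formal consequence of Proposition~\ref{compatibility chain level}. Indeed, Proposition~\ref{compatibility chain level} asserts that the PSS and $\wt{PSS}$ maps furnish a morphism of short exact sequences of cochain complexes
\begin{equation*}
\begin{tikzcd}
0 \arrow[r] & C(\mathscr{D}) \arrow[r,"i"] \arrow[d,"PSS"] & C(\wt{\mathscr{D}}) \arrow[r,"p"] \arrow[d,"\wt{PSS}"] & C(\mathscr{D}) \arrow[r] \arrow[d,"PSS"] & 0 \\
0 \arrow[r] & CF_{\Sigma}(L,H) \arrow[r,"i_{Fl}"] & CF_{X}(\wt{L},\wt{H}) \arrow[r,"p_{Fl}"] & CF_{\Sigma}(L,H) \arrow[r] & 0 .
\end{tikzcd}
\end{equation*}
Passing to the associated long exact sequences in cohomology, the naturality of the snake/connecting homomorphism gives a commutative ladder. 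The vertical arrows of that ladder are exactly the maps induced by $PSS$ and $\wt{PSS}$ on cohomology, and these are isomorphisms $QH^\ast(L)\xrightarrow{\sim} HF_\Sigma(L,H)$ and $QH^\ast(\wt{L})\xrightarrow{\sim} HF_X(\wt{L},\wt{H})$ (this is the standard PSS isomorphism for Lagrangian Floer homology, as recalled in Section~\ref{prelim specinv}, applied both to $L$ and to $\wt{L}$; for $\wt{L}$ one uses that the admissible complex $C(\wt{\mathscr{D}})$ computes $QH^\ast(\wt{L})$, as recorded in Section~\ref{quantum Gysin}). This yields precisely diagram~\eqref{Floer-Gysin seq 2}.

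Concretely I would proceed in three short steps. First, observe that $i_{Fl}$ and $p_{Fl}$ fit into a short exact sequence of complexes: $i_{Fl}$ is injective by construction (it sends the generator $[\phi_H^t(x_j),u]$ to $[\phi_{\wt H}^t(x_j'),\wt u]$), $p_{Fl}$ is surjective (it hits every $[\phi_H^t(x_j),u]$ via $[\phi_{\wt H}^t(x_j''),\wt u]$), and $\ker p_{Fl} = \operatorname{Im} i_{Fl}$ since both equal the span of the generators of type $x_j'$; this mirrors the pearl-level short exact sequence of Section~\ref{quantum Gysin}. Second, invoke Proposition~\ref{compatibility chain level} to get the chain-level commuting ladder above. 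Third, apply the homological algebra fact that a morphism of short exact sequences of complexes induces a morphism of the long exact cohomology sequences, commuting in particular with the connecting maps $\delta$ and $\delta_{Fl}$; combined with the fact that the vertical maps are isomorphisms, this is the assertion of the corollary.

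I do not expect any genuine obstacle here: the statement is a formal corollary, and essentially all the content lives in Proposition~\ref{compatibility chain level} (and in the construction of $\wt{PSS}$ together with the identification of the admissible complex with $QH^\ast(\wt L)$). The only point requiring a word of care is that the connecting map $\delta$ appearing in the top row of~\eqref{Floer-Gysin seq 2} is the one from the quantum Gysin sequence~\eqref{quantum Gysin sequence} and $\delta_{Fl}$ is its Floer counterpart; one should note that the square involving these connecting maps commutes because the snake lemma's connecting homomorphism is natural with respect to the chain map of short exact sequences exhibited in Proposition~\ref{compatibility chain level} — no separate geometric argument is needed. Thus the proof is a one-line application of naturality of long exact sequences, and I would simply state it as such.
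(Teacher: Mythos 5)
Your argument is correct and takes essentially the same route as the paper, which simply asserts that Corollary~\ref{compatibility homology level} ``immediately'' follows from Proposition~\ref{compatibility chain level}. You have just spelled out the standard homological-algebra justification (morphism of short exact sequences of complexes induces a commuting ladder of long exact sequences, including the connecting maps), together with the observation that $i_{Fl}$, $p_{Fl}$ do form a short exact sequence by inspection of the generators; this is faithful to the paper's intent.
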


This compatibility of Floer--Gysin and quantum Gysin sequences ({\cor} \ref{compatibility homology level}) and $\delta=0$ ({\propo} \ref{connecting map zero}) imply the following:

\begin{corol}\label{connecting map Floer}
The connecting map of the Floer--Gysin sequence is a zero map, i.e. 
$$\delta_{Fl}=0 .$$
\end{corol}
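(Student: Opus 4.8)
The plan is to deduce Corollary \ref{connecting map Floer} purely formally from the two ingredients we have already established: the compatibility of the quantum and Floer--Gysin sequences via the PSS maps (Corollary \ref{compatibility homology level}) and the vanishing of the connecting map $\delta$ of the quantum Gysin sequence (Proposition \ref{connecting map zero}). The key observation is that the vertical PSS maps in diagram \eqref{Floer-Gysin seq 2} are \emph{isomorphisms}, so any relation among the horizontal maps on the top row transfers verbatim to the bottom row.

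Concretely, I would argue as follows. Fix a class $\beta \in HF_{\Sigma}(L,H)$ appearing as the source of $\delta_{Fl}$. Since $PSS_{L,H} : QH^{\ast}(L) \to HF_{\Sigma}(L,H)$ is an isomorphism, write $\beta = PSS(\alpha)$ for a (unique) $\alpha \in QH^{\ast}(L)$. The right-hand square of \eqref{Floer-Gysin seq 2} commutes, i.e.
\begin{equation}
\delta_{Fl} \circ PSS = PSS \circ \delta
\end{equation}
as maps $QH^{\ast}(L) \to HF_{\Sigma}^{\ast+1}(L,H)$ (reading degrees along the sequence). Therefore
\begin{equation}
\delta_{Fl}(\beta) = \delta_{Fl}(PSS(\alpha)) = PSS(\delta(\alpha)) = PSS(0) = 0 ,
\end{equation}
where the third equality is Proposition \ref{connecting map zero}. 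As $\beta$ was arbitrary, $\delta_{Fl} = 0$.

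I do not anticipate a genuine obstacle here: the statement is a one-line diagram chase once Corollary \ref{compatibility homology level} and Proposition \ref{connecting map zero} are in hand. The only point that requires a word of care is that the vertical maps in \eqref{Floer-Gysin seq 2} really are the PSS isomorphisms (as opposed to merely chain maps); this is exactly what Proposition \ref{compatibility chain level} together with the standard fact that PSS induces an isomorphism on (Lagrangian quantum) homology gives us, so invertibility of the relevant vertical arrow is automatic. One should also make sure the degree bookkeeping is consistent — $\delta$ shifts degree in the same way on both rows because the identification of the two Gysin sequences is degree-preserving — but this is built into the commutativity of \eqref{Floer-Gysin seq 2} and needs no separate check.
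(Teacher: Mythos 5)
Your proof is correct and is precisely the diagram chase the paper leaves implicit: the paper derives Corollary \ref{connecting map Floer} directly from Corollary \ref{compatibility homology level} together with Proposition \ref{connecting map zero} in a single sentence, and your argument spells out that one line by conjugating $\delta=0$ through the PSS isomorphisms.
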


\subsection{Filtered Floer--Gysin sequence}\label{Filtered Floer--Gysin sequence}

In this section, we study the change of filtration in the Floer--Gysin sequence \eqref{Floer-Gysin seq 2}. The main result of the section is the following:

\begin{prop}\label{filtered Floer--Gysin prop}
Let $H$ be any non-{\degen} {\hamil} on $\Sigma$ and define $\wt{H}$, which is a lifted non-{\degen} {\hamil} on $X$, defined by the equation \eqref{Ham lift nondeg}. The filtration change in the Floer--Gysin sequence for these {\hamil}s are as follows:

\begin{equation}\label{filtered Floer--Gysin seq}
\begin{tikzcd}
\arrow[r,"\delta_{Fl}"] & HF_{\Sigma} ^{\tau} (L,H) \arrow[r,"i_{Fl}"] & HF_{X} ^{h(r_0) \tau + \eps' } (\wt{L},\wt{H}) \arrow[r,"p_{Fl}"] & HF_{\Sigma} ^{\tau} (L,H) \arrow[r,"\delta_{Fl}"] & \
\end{tikzcd}
\end{equation}
where $\eps':= \eps \max_{x\in X} f(\theta) h(r) $.
\end{prop}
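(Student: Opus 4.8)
The plan is to bootstrap from the unfiltered Floer--Gysin sequence (Corollary \ref{compatibility homology level}) by tracking the action filtration through the chain maps $i_{Fl}$ and $p_{Fl}$ already defined, and then checking that the short exact sequence of chain complexes that underlies the Gysin sequence is compatible with the action filtration up to the stated shift. Concretely, I would first record the key computation, already carried out in the remark preceding Definition \ref{def chain prop Floer}: for a capped Hamiltonian chord $[z,u]=[\phi_H^t(x_j),u]$ of $H$ from $L$ to itself, its lift $[\phi_{\wt H}^t(x_j'),\wt u]$ satisfies
\begin{equation}
\mathscr{A}_{\wt L,\wt H}\bigl([\phi_{\wt H}^t(x_j'),\wt u]\bigr)=h(r_0)\,\mathscr{A}_{L,H}([z,u])+\eps h(r_0)f(\theta'),
\end{equation}
with $h(r_0)=1-r_0^2$, and the analogous identity for the $x_j''$-lift with $f(\theta'')$ in place of $f(\theta')$. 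Since $\min f\le f(\theta'),f(\theta'')\le\max f$, both error terms are bounded by $\eps':=\eps\max_{x\in X}f(\theta)h(r)$ (in fact $\eps h(r_0)\max f$, but the stated $\eps'$ dominates it, which is all we need).

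Next I would argue the filtered statements for the three maps one at a time. For $i_{Fl}$: if $[z,u]$ has action $<\tau$, then by the identity above its image has action $h(r_0)\tau+\eps h(r_0)\max f\le h(r_0)\tau+\eps'$, so $i_{Fl}$ maps $CF_\Sigma^\tau(L,H)$ into $CF_X^{h(r_0)\tau+\eps'}(\wt L,\wt H)$; passing to homology gives the map labeled $i_{Fl}$ in \eqref{filtered Floer--Gysin seq}. For $p_{Fl}$: a generator $[\phi_{\wt H}^t(x_j''),\wt u]$ of action $<h(r_0)\tau+\eps'$ maps to $[z,u]$ whose action is $\le\bigl((h(r_0)\tau+\eps')-\eps h(r_0)\min f\bigr)/h(r_0)$; to land in $CF_\Sigma^\tau$ one must absorb the discrepancy. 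The clean way is to fix, once and for all, the Morse function $f$ on $S^1$ to be normalized so that $\min f=0$ (hence $\eps'=\eps h(r)\max f$ still, but the lower error vanishes); then the image of $p_{Fl}$ on action-$<h(r_0)\tau+\eps'$ generators has action $<\tau+\eps'/h(r_0)$, and after the further harmless choice $\eps$ small and using $h(r_0)=1-r_0^2$ the target is exactly $HF_\Sigma^\tau(L,H)$ up to a negligible constant that can be reabsorbed into $\tau$; I would state this cleanly by declaring $f\ge 0$ with $\min f=0$ at the outset, exactly as is implicit in the normalization leading to \eqref{Ham lift nondeg}. Then for the connecting map $\delta_{Fl}$: by Corollary \ref{connecting map Floer} it is the zero map, so there is nothing to check about its filtration behaviour — it fits into \eqref{filtered Floer--Gysin seq} trivially.

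Finally I would assemble these into the long exact sequence. Because $i_{Fl}$ and $p_{Fl}$ are filtered chain maps for the filtrations indicated, and because $\delta_{Fl}=0$, the short exact sequence of chain complexes $0\to CF_\Sigma^\tau(L,H)\xrightarrow{i_{Fl}}CF_X^{h(r_0)\tau+\eps'}(\wt L,\wt H)\xrightarrow{p_{Fl}}CF_\Sigma^\tau(L,H)\to 0$ — which holds on the nose because $i_{Fl}$ is injective with image the span of the $x_j'$-capped generators of the appropriate action and $p_{Fl}$ is the projection onto the span of the $x_j''$-capped ones — induces the desired filtered Floer--Gysin sequence \eqref{filtered Floer--Gysin seq} after passing to homology. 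The one point needing care is that the action-window identification of generators used above is consistent with the differential, i.e. that $i_{Fl},p_{Fl}$ remain chain maps at the filtered level; this is immediate from the fact that Floer differentials count strips of \emph{nonnegative} energy, so they do not increase action, combined with the energy/action identity above.

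The main obstacle I anticipate is bookkeeping the additive error term $\eps'$ and the multiplicative factor $h(r_0)$ simultaneously and making sure the chosen normalization of $f$ (and the smallness of $\eps$) makes the target filtration level in the statement \emph{exactly} $h(r_0)\tau+\eps'$ rather than $h(r_0)\tau+\eps'+O(\eps)$; this is a matter of pinning down the conventions $\min f=0$, $h(r)=1-r^2$ near $r_0$, and $\eps'=\eps\max_x f(\theta)h(r)$ so that the two chains of inequalities close up, and of verifying that the connecting-map vanishing genuinely removes any need to control $\delta_{Fl}$'s filtration. Everything else is routine once the energy identity \eqref{action estimate} is in hand.
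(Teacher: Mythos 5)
Your argument follows the same route as the paper's: both rely on the action computation \eqref{action estimate} and the explicit description of $i_{Fl}$ and $p_{Fl}$ from Definition~\ref{def chain prop Floer}, so there is no difference of method. You have, however, correctly flagged a point that the paper's own proof glosses over. With the stated $\eps'=\eps\max_{x}\bigl(f(\theta)h(r)\bigr)$ one has $\eps'>\eps h(r_0)\max f$, since $h$ attains its maximum $h(0)=1$ strictly above $h(r_0)$; therefore a generator $[\phi_{\wt H}^t(x_j''),\wt u]\in CF_X^{h(r_0)\tau+\eps'}(\wt L,\wt H)$, which by \eqref{action estimate} satisfies $h(r_0)\mathscr{A}_{L,H}([z,u])+\eps h(r_0)\max f < h(r_0)\tau+\eps'$, is sent by $p_{Fl}$ to $[z,u]$ with $\mathscr{A}_{L,H}([z,u])<\tau+\frac{\eps'-\eps h(r_0)\max f}{h(r_0)}$, which is strictly greater than $\tau$. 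The inequality the paper cites, $\eps h(r_0)\min f<\eps h(r_0)\max f<\eps'$, is precisely the source of this overshoot rather than a resolution of it.

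Your proposed fix of normalizing $\min f=0$ cleans up $i_{Fl}$ (its image then lands in $CF_X^{h(r_0)\tau}\subset CF_X^{h(r_0)\tau+\eps'}$) but does nothing for $p_{Fl}$: the overshoot is governed by $\eps'-\eps h(r_0)\max f$, which is insensitive to shifting $\min f$. (Note also that your bound for the $p_{Fl}$ image uses $\min f$ where $\max f$ should appear, since $f(\theta'')=\max f$ in \eqref{action estimate}; the conservative bound you obtain is larger, not smaller, so the conclusion is unchanged, but the formula is not the exact one.) To make the filtration levels literally correct one should instead set $\eps':=\eps h(r_0)\max f$; with that choice both $i_{Fl}$ and $p_{Fl}$ restrict exactly as claimed. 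Alternatively, observe that at a fixed middle filtration level $\alpha$ the chain-level short exact sequence is really $0\to CF_\Sigma^{\tau_1}\to CF_X^{\alpha}\to CF_\Sigma^{\tau_2}\to 0$ with $\tau_1-\tau_2=\eps(\max f-\min f)$, and both $\tau_1,\tau_2\to\tau$ as $\eps\to 0$. Either way the discrepancy is $O(\eps)$ and disappears in the homogenization in Section~\ref{Completing the proof}, which is the only place the proposition is used, so nothing downstream is affected; but ``reabsorbed into $\tau$'' is not itself a proof that $p_{Fl}$ restricts at the stated level and should be replaced by one of the precise statements above.
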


    \begin{proof}[Proof of {\propo} \ref{filtered Floer--Gysin prop}]
    
    From the definition of the maps $i_{Fl}$ and $p_{Fl}$, we need to estimate the change of action in 
    \begin{equation}
    \begin{gathered}
       i_{Fl}:  CF (L,H) \to CF (\wt{L} , \wt{H}) \\
       [\phi_H ^t (x_j) , u ] \mapsto [\phi_{\wt{H}} ^t (x_j ') , \wt{u} ]
    \end{gathered}
    \end{equation}
    and 
    \begin{equation}
    \begin{gathered}
        p_{Fl}:  CF (\wt{L} , \wt{H}) \to CF (L,H) \\
        [\phi_{\wt{H}} ^t (x_j '') , \wt{u} ] \mapsto [\phi_H ^t (x_j) , u ] .
    \end{gathered}
    \end{equation}

    The equation \eqref{action estimate} implies
    
    \begin{equation}
    \begin{gathered}
        CF_\Sigma ^{\tau} (L,H) \to CF_X ^{(1-r_0 ^2)\cdot \tau + \eps'}  (\wt{L} , \wt{H}) \\
        [\phi_H ^t (x_j) , u ] \mapsto [\phi_{\wt{H}} ^t (x_j ') , \wt{u} ]
    \end{gathered}
    \end{equation}
and 
    \begin{equation}
    \begin{gathered}
         CF_X ^{(1-r_0 ^2)\cdot \tau + \eps'}  (\wt{L} , \wt{H})  \to CF_\Sigma ^{\tau} (L,H)  \\
         [\phi_{\wt{H}} ^t (x_j '') , \wt{u} ] \mapsto [\phi_H ^t (x_j) , u ] .
    \end{gathered}
    \end{equation}
    as we have
    $$\eps h(r_0) \min f  <  \eps h(r_0) \max f < \eps' $$
    from our choice of $\eps$. This finishes the proof of {\propo} \ref{filtered Floer--Gysin prop}.
    \end{proof}

\subsection{Completing the proof}\label{Completing the proof}

We complete the proof of Theorem \ref{main thm intro} (a.k.a \ref{main thm}).

\begin{proof}[Proof of Theorem \ref{main thm intro} (a.k.a \ref{main thm})]
Notice that from the definition of the reduction explained in Section \ref{Borman reduction def}, what we need to prove is the following:
\begin{equation}\label{eq goal}
   \ovl{ c}_{e_X}  (h \pi^\ast H) = \frac{1}{2\kappa_L+1}  \ovl{c}_{e_\Sigma  } ( H)   
\end{equation}
for any {\hamil} $H$ on $\Sigma$. 

 First of all, we state the following property for {\lag} tori.
   
    \begin{prop}[{\cite[{\propo} 5.3]{[San21]}}]\label{Sanda}
    If $K$ is a {\lag} torus in $M$ that corresponds to a non-degenerate critical point of its {\suppot}, then the class
    \begin{equation}
        e_{M,K}:= \frac{1}{\langle p_{K}, p_{K} \rangle}_{\text{Muk}} \mathcal{OC}^0 (pt_{K}),
    \end{equation}
    where $pt_{K}$ is the point class in $HF(K)$, satisfies the following two properties:
    \begin{enumerate}
        \item It is an idempotent which is a unit of a field factor of $QH(M)$, i.e. $e_{L} \cdot QH(X)$ is a field. 
        \item It is mapped to $1_K$ by the closed-open map:
         \begin{equation}
     \begin{gathered}
       \mathcal{CO}^0 : QH(M) \to HF(K) \\
        \mathcal{CO}^0 (e_{M,K} ) = 1_K .
     \end{gathered}
 \end{equation}
    \end{enumerate}
    
    \end{prop}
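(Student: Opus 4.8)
The statement to prove is Proposition~\ref{Sanda}, which is attributed to \cite[{\propo} 5.3]{[San21]}. Since it is cited rather than proven, I will sketch how one would establish it, drawing on the Cardy-type relations between the open-closed and closed-open maps together with the non-degeneracy hypothesis on the superpotential.

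\textbf{Setup and strategy.} The plan is to use the interplay between the closed-open map $\mathcal{CO}^0 \co QH(M) \to HF(K)$, which is a unital ring homomorphism, the open-closed map $\mathcal{OC}^0 \co HF(K) \to QH(M)$, which is a $QH(M)$-module map, and the Mukai (Poincar\'e/Frobenius) pairing on $HF(K)$. The key structural input is the Cardy relation: for the monotone Lagrangian torus $K$, composing the two maps recovers, up to the Euler characteristic / pairing factor, multiplication by the image of the diagonal; concretely, for $\alpha, \beta \in HF(K)$ one has $\langle \mathcal{CO}^0(\mathcal{OC}^0(\alpha)), \beta\rangle_{\text{Muk}} = \langle \alpha, \beta \rangle_{\text{Muk}} \cdot (\text{something})$, and when $HF(K)$ is semisimple — which is exactly what the non-degeneracy of the critical point of $W_K$ guarantees — this forces $\mathcal{CO}^0 \circ \mathcal{OC}^0$ to be an isomorphism onto the corresponding summand.

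\textbf{Key steps.} First I would recall that a non-degenerate critical point of the superpotential $W_K$ corresponds (via the Fukaya-theoretic computation of $HF(K,\rho)$, e.g. \cite{[FOOO19]}, \cite{[BK13]}) to a rank-one summand of $HF(K,\rho) \cong \Lambda$ on which the quantum product is the field structure, and the point class $pt_K$ is (a nonzero multiple of) the generator dual to the unit under the Mukai pairing; in particular $\langle pt_K, pt_K\rangle_{\text{Muk}} \neq 0$, so the normalization $e_{M,K} := \langle pt_K, pt_K\rangle_{\text{Muk}}^{-1}\,\mathcal{OC}^0(pt_K)$ makes sense. Second, I would use the module property $\mathcal{OC}^0(\mathcal{CO}^0(a) \cdot \alpha) = a \ast \mathcal{OC}^0(\alpha)$ together with unitality $\mathcal{CO}^0(1_M) = 1_K$ to show $e_{M,K}$ is idempotent: apply $\mathcal{CO}^0$, use that it is a ring map, reduce to the identity $\mathcal{CO}^0(\mathcal{OC}^0(pt_K)) = \langle pt_K,pt_K\rangle_{\text{Muk}}\, 1_K$ (this is the Cardy/unitality computation specific to the semisimple summand), and conclude $\mathcal{CO}^0(e_{M,K}) = 1_K$ and $\mathcal{CO}^0(e_{M,K} \ast e_{M,K}) = \mathcal{CO}^0(e_{M,K})^2 = 1_K = \mathcal{CO}^0(e_{M,K})$; then a separate argument (injectivity of $\mathcal{CO}^0$ on the relevant summand, or the fact that $\mathcal{OC}^0$ lands in the summand of $QH(M)$ detected by $K$) upgrades this to $e_{M,K}\ast e_{M,K} = e_{M,K}$ in $QH(M)$ itself. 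Third, to see $e_{M,K}\cdot QH(M)$ is a \emph{field}, I would observe that $\mathcal{CO}^0$ restricted to this summand is a unital ring homomorphism to the one-dimensional field $HF(K)_{\text{summand}} \cong \Lambda$, and combine this with the fact — again from semisimplicity and the module structure — that $\mathcal{OC}^0$ and $\mathcal{CO}^0$ are mutually inverse isomorphisms between $e_{M,K}\cdot QH(M)$ and the rank-one Floer summand, so $e_{M,K}\cdot QH(M) \cong \Lambda$ is a field.

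\textbf{Main obstacle.} The delicate point is the Cardy relation and, relatedly, showing that the idempotent identity upgrades from $HF(K)$ (where $\mathcal{CO}^0$ sees it) to $QH(M)$ itself; this requires knowing that $\mathcal{OC}^0(pt_K)$ lies entirely in the semisimple summand of $QH(M)$ "supported near $K$" and that $\mathcal{CO}^0$ is injective there. In the monotone torus setting this follows from the structure results of \cite{[FOOO19]} (split-generation of the relevant summand by $(K,\rho)$) together with the compatibility of $\mathcal{OC}^0, \mathcal{CO}^0$ with the module/ring structures; making this precise is where the real content lies, and it is exactly what \cite[{\propo} 5.3]{[San21]} carries out, so here I would simply cite that reference and record the statement, as the excerpt does.
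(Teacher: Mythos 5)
The paper supplies no proof of this statement: Proposition \ref{Sanda} is quoted as \cite[Proposition 5.3]{[San21]}, and the only commentary the paper adds is a short remark identifying $\langle -,-\rangle_{\text{Muk}}$ as the Mukai pairing. You correctly recognize this and ultimately defer to the citation, and your supplementary sketch (non-degeneracy of the critical point forcing $HF(K,\rho)\cong\Lambda$ to be a semisimple rank-one summand, a Cardy-type identity reducing $\mathcal{CO}^0\circ\mathcal{OC}^0$ to the Mukai pairing factor, and the $QH(M)$-module structure to upgrade idempotence from $HF(K)$ to $QH(M)$) is a plausible outline of what Sanda carries out, but since the paper itself offers nothing to compare against, there is no discrepancy to flag.
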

    
    \begin{remark}
    The bracket $\langle -, - \rangle_{\text{Muk}}$ denotes the Mukai pairing, which is a canonical pairing for $HF(K)$ (more precisely, for the Hochschild homology of the Fukaya category $HH_\ast (\mathscr{F}(M))$), see \cite{[San21]}. Here, the only important thing is that $\frac{1}{\langle p_{K}, p_{K} \rangle}_{\text{Muk}}$ is an element of $\Lambda$ that only depends on the {\lag} $K$.
    \end{remark}
    
    As we are assuming that $L$ (resp. $\wt{L}$) is a monotone {\lag} torus equipped with a local system corresponding to a non-{\degen} critical point of its {\suppot}, by {\propo} \ref{Sanda}, there exists a unit of a field factor $e_{\Sigma}:=e_{\Sigma,L}$ (resp. $e_{X}:=e_{X,\wt{L}}$) of $QH(\Sigma)$ (resp. $QH(X)$). Thus, we have {\EP} {\qmor}s 
    $$\ovl{ c}_{ e_{X} } : \wt{\Ham} (X) \longrightarrow \R$$
    and 
    $$\ovl{ c}_{ e_{\Sigma} } : \wt{\Ham} (\Sigma) \longrightarrow \R . $$
    
   We will now prove the equation \eqref{eq goal}.

    Consider the diagram

\begin{equation}\label{main diagram in the proof}
\begin{tikzcd}
\arrow[r] & QH(L)  \arrow[r,"i"] \arrow[d,"PSS"]  & QH (\wt{L } ) \arrow[d,"\wt{PSS}"] \arrow[r,"p"] & QH(L)  \arrow[d,"PSS"]  \arrow[r] &  \ \\
  \arrow[r] & HF ^{\tau}(L, H)  \arrow[r,"i"] & HF ^{\frac{1}{2\kappa_L+1} \cdot \tau + \eps' } (\wt{L},  \wt{H}) \arrow[shift right, swap]{d}{\mathcal{OC}^{0}} \arrow[r,"p"] & HF ^{\tau}(L, H)  \arrow[d,"\mathcal{OC}^{0}"] \arrow[r] &  \ \\
      & QH (\Sigma) \arrow[u,"\mathcal{CO}^{0}"] & QH(X) \arrow[shift right, swap]{u}{\mathcal{CO}^{0}}  & QH( \Sigma)   & 
\end{tikzcd}
\end{equation}

From Corollary \ref{connecting map Floer}, it follows that
$$i:  QH (L) \to QH (\wt{L}) $$
is injective and 
    $$p: QH (\wt{L}) \to QH (L) $$
    is surjective. Thus, 
    $$i(1_L) = 1_{\wt{L}}$$
    and there exists a class $\alpha \in QH (\wt{L}) $ such that 
    $$p(\alpha)=1_{L}.$$

By focusing on the left side of the diagram \eqref{main diagram in the proof}, we get

\begin{equation}\label{diagram left}
\begin{tikzcd}
  \arrow[r] & HF ^{\tau}(L, H)  \arrow[r,"i"] & HF ^{\frac{1}{2\kappa_L+1} \cdot \tau +\eps'} (\wt{L},  \wt{H}) \arrow[shift right, swap]{d}{\mathcal{OC}^{0}} \arrow[r,] & \  \\
      & QH (\Sigma) \arrow[u,"\mathcal{CO}^{0}"] & QH(X)  &    .
\end{tikzcd}
\end{equation}

By using the diagram \eqref{diagram left} and basic properties of {\specinv}s, we get

\begin{equation}\label{diag left}
    \begin{aligned}
         \ell_{L} (H)& = \ell_{ \mathcal{CO}^{0}(e_{\Sigma}) } (H) \leq c_{e_{\Sigma}} (H) , \\
        \ell_{\wt{L}} (\wt{H})   
        & \leq \frac{1}{2\kappa_L+1}  \ell_{L}  (H)  +\eps' , \\
          \ell_{p_{\wt{L}}} (\wt{H})  
          & \leq \ell_{\wt{L}}(\wt{H}) ,  \\
          c_{\mathcal{OC}^0 (p_{\wt{L}}) } (\wt{H}) & \leq \ell_{p_{\wt{L}}} (\wt{H}) ,\\
        c_{e_X }(\wt{H}) = c_{\frac{1}{\langle p_{\wt{L}}, p_{\wt{L}} \rangle}_{\text{Muk}} \mathcal{OC}^0 (p_{\wt{L}}) } (\wt{H}) & = c_{ \mathcal{OC}^0 (p_{\wt{L}}) }(\wt{H}) - \nu ({\langle p_{\wt{L}}, p_{\wt{L}} \rangle}_{\text{Muk}} ^{-1}).
    \end{aligned}
\end{equation}

The chain of inequalities \eqref{diag left} imply 

\begin{equation}\label{eq 20}
      c_{e_X}  (\wt{H}) - \nu ({\langle p_{\wt{L}}, p_{\wt{L}} \rangle}_{\text{Muk}})   \leq \frac{1}{2\kappa_L+1}  c_{e_\Sigma} (H) +\eps'.
\end{equation}
From the triangle equality, we have

\begin{equation}\label{eq 21}
    \begin{gathered}
        c_{e_X}  (\wt{H})= c_{e_X}  (h \pi^\ast H \wedge F )  \geq  c_{e_X}  (h \pi^\ast H) - c_{e_X}  (\ovl{F}) 
        \geq c_{e_X}  (h \pi^\ast H) -  \eps'.
    \end{gathered}
\end{equation}
Thus, by combining the equations \eqref{eq 20} and \eqref{eq 21},
\begin{equation}
    \begin{gathered}
       \frac{1}{2\kappa_L+1}  c_{e_\Sigma} (H) +\eps'  \geq 
       c_{e_X}  (h \pi^\ast H) -  \eps' -  \nu ({\langle p_{\wt{L}}, p_{\wt{L}} \rangle}_{\text{Muk}})
    \end{gathered}
\end{equation}
and as we can take $\eps'$ arbitrarily small, we have 
\begin{equation}\label{x sigma}
    \begin{gathered}
        \frac{1}{2\kappa_L+1}  c_{e_\Sigma} (H) \geq c_{e_X}  (h \pi^\ast H) -  \nu ({\langle p_{\wt{L}}, p_{\wt{L}} \rangle}_{\text{Muk}}).
    \end{gathered}
\end{equation}

Next, by focusing on the right side of the diagram \eqref{main diagram in the proof}, we get

\begin{equation}\label{diagram right}
\begin{tikzcd}
  \arrow[r] & HF ^{\frac{1}{2\kappa_L+1} \cdot \tau +\eps' } (\wt{L},  \wt{H})  \arrow[r,"p"] & HF ^{\tau}(L, H)  \arrow[d,"\mathcal{OC}^{0}"] \arrow[r,""] &  \ \\
       & QH(X) \arrow{u}{\mathcal{CO}^{0}}  & QH( \Sigma)   & 
\end{tikzcd}
\end{equation}

By using the diagram \eqref{diagram right} and basic properties of {\specinv}s, we get

\begin{equation}\label{diag right}
    \begin{aligned}
          \ell_{\wt{L}} (\wt{H})& =\ell_{\mathcal{CO}^{0}(e_X)} (\wt{H}) \leq c_{e_X} (\wt{H}) , \\
            \ell_{\alpha}  (\wt{H})   & \leq \ell_{\wt{L}} (\wt{H})  + \nu_{\wt{L}} (\alpha)   \\
             \frac{1}{2\kappa_L+1}  \ell_{L} (H)  +\eps' & \leq \ell_{\alpha} (\wt{H}) , \\
          c_{\mathcal{OC}^0 (p_{L}) } (H) & \leq \ell_{p_{L}} (H) , \\
        c_{e_\Sigma } (H) = c_{\frac{1}{\langle p_{L}, p_{L} \rangle}_{\text{Muk}} \mathcal{OC}^0 (p_{L}) } (H) & = c_{ \mathcal{OC}^0 (p_{L}) } (H) - \nu ({\langle p_{L}, p_{L} \rangle}_{\text{Muk}} ^{-1})
    \end{aligned}
\end{equation}
where $ p(\alpha) = 1_{L}$. This chain of inequalities imply 
\begin{equation}\label{eq 22}
      \frac{1}{2\kappa_L+1}  ( c_{e_\Sigma  } (H) - \nu ({\langle p_{L}, p_{L} \rangle}_{\text{Muk}}) ) +\eps' \leq  c_{e_X} (\wt{H})+   \nu_{\wt{L}} (\alpha) .
\end{equation}
From the triangle equality, we have
\begin{equation}\label{eq 23}
    \begin{gathered}
        c_{e_X}  (\wt{H}) = c_{e_X}  (h \pi^\ast H \wedge F )  \leq  c_{e_X}  (h \pi^\ast H) + c_{e_X}  (F) 
        \leq c_{e_X}  (h \pi^\ast H) +  \eps'.
    \end{gathered}
\end{equation}
Thus, by combining the equations \eqref{eq 22} and \eqref{eq 23},
\begin{equation}
    \begin{gathered}
       \frac{1}{2\kappa_L+1}  ( c_{e_\Sigma  } (H) - \nu ({\langle p_{L}, p_{L} \rangle}_{\text{Muk}}) ) +\eps' \leq c_{e_X}  (h \pi^\ast H) +  \eps' +   \nu_{\wt{L}} (\alpha) 
    \end{gathered}
\end{equation}
and as we can take $\eps'$ arbitrarily small, we have 
\begin{equation}\label{sigma x}
       \frac{1}{2\kappa_L+1}  ( c_{e_\Sigma  } (H) + \nu ({\langle p_{L}, p_{L} \rangle}_{\text{Muk}}) )  \leq c_{e_X}  (h \pi^\ast H) +   \nu_{\wt{L}} (\alpha) .
\end{equation}

The equations \eqref{x sigma} and \eqref{sigma x} give 
\begin{equation}\label{final equation}
\begin{gathered}
   | c_{e_X } (h \pi^\ast H)  -  \frac{1}{2\kappa_L+1}  c_{e_\Sigma } (H)| \leq \text{Const}
\end{gathered}
\end{equation}
where
\begin{equation}
     \text{Const}:= \max \{|\nu ({\langle p_{L}, p_{L} \rangle}_{\text{Muk}}) ) - \nu_{\wt{L}} (\alpha)| , \nu ({\langle p_{\wt{L}}, p_{\wt{L}} \rangle}_{\text{Muk}}) \} .
\end{equation}
In particular, by homogenizing the equation \eqref{final equation}, we get
\begin{equation}\label{absolute spec inv descend}
   \ovl{ c}_{e_X}  (h \pi^\ast H) = \frac{1}{2\kappa_L+1}  \ovl{c}_{e_\Sigma  } ( H)   ,
\end{equation}
which implies 
\begin{equation}\label{conclusion of reduction}
   \Theta^\ast \ovl{ c}_{ e_{X} } = \frac{1}{2\kappa_L+1} \ovl{c}_{e_{\Sigma}  } .
\end{equation}

\begin{remark}
Moreover, the part concerning {\lag} {\specinv}s in the equations \eqref{diag left}, \eqref{diag right} imply the following stronger version of the equation \eqref{absolute spec inv descend}:
\begin{equation}\label{absolute and relative spec inv descend}
\begin{gathered}
    \ovl{c}_{e_X } (h \pi^\ast H)  = \ovl{\ell}_{\wt{L}} (h \pi^\ast H) = \frac{1}{2\kappa_L+1}  \ovl{\ell}_{L}(H) =  \frac{1}{2\kappa_L+1}  \ovl{c}_{e_\Sigma } (H) 
\end{gathered}
\end{equation}
for any {\hamil} $H$ on $\Sigma$. 
\end{remark}

We verify that the skeleton $\Delta$ satisfies Borman's \textit{smallness} condition. The properties of the idempotent $e_X$ from {\propo} \ref{Sanda} imply
\begin{equation}\label{suphv property}
    \ovl{c}_{e_X } =  \ovl{\ell}_{\wt{L}}
\end{equation}
which implies that $\wt{L}$ is $\ovl{c}_{e_X }$-{\suphv}. As $\wt{L} \subset X \backslash \Delta$, the set $X \backslash \Delta$ is also $\ovl{c}_{e_X }$-{\suphv}. Thus, from the criterion for smallness {\propo} \ref{criterion for smallness}, we see that the skeleton $\Delta$ is indeed satisfying Borman's \textit{smallness} condition.

We have seen that the skeleton $\Delta$ is small {\wrt} $\ovl{c}_{ e_{X} }$ so one can apply Borman's reduction to the {\EP} {\qmor} $\ovl{ c}_{ e_{X} }$ and obtain a {\qmor} on $\wt{\Ham} (\Sigma)$ which, according to \eqref{conclusion of reduction}, coincides with the {\EP} {\qmor} $\ovl{ c}_{ e_{\Sigma} }$ and this completes the proof of Theorem \ref{main thm intro} (a.k.a \ref{main thm}).

\end{proof}

\section{Examples}\label{examples}
In this section, we will see Example \ref{example intro}, to which Theorem \ref{main thm} apply, more in detail.

\subsection{Laurent and Novikov fields}

We start with a technical but a very useful and important remark concerning the {\coeff} field, which was considered in \cite[Section 4.2, 4.5]{[Kaw22]}. To summarize the point, to deal with {\specinv}s, e.g. {\EP} {\qmor}s, it is more convenient to work with the Laurent {\coeff}
$$\Lambda_{\text{Lau}} :=\{\sum_{k\geq k_0 } b_k t^k : k_0\in \mathbb{Z},b_k \in \mathbb{C} \}  ,   $$
 where $t$ is a formal variable, while {\lag} Floer theory is more suited to work with the universal Novikov field
$$\Lambda:=\{\sum_{j=1} ^{\infty} a_j T^{\lambda_j} :a_j \in \mathbb{C}, \lambda _j  \in \mathbb{R},\lim_{j\to +\infty} \lambda_j =+\infty \} .$$

In this paper, we have been working with the universal Novikov field $\Lambda$, as for example, {\propo} \ref{Sanda} requires the universal Novikov field $\Lambda$. However, with the universal Novikov field $\Lambda$, it becomes more complicated to consider {\EP} {\qmor}s, as $QH(X)=QH(X;\Lambda)$ is more complicated than $QH(X;\Lambda_{\text{Lau}})$, e.g. while $QH( \C P^n;\Lambda_{\text{Lau}})$ is a field, $QH( \C P^n;\Lambda)$ splits into a sum of $n+1$ fields:
$$QH( \C P^n;\Lambda) = \bigoplus_{1 \leq j \leq n+1} Q_j$$
where $Q_j$ is a field. Thus, it might give an impression that we get more {\EP} {\qmor}s by taking the universal Novikov field $\Lambda$ rather than the field of Laurent series (c.f. \cite[Remark 5.2]{[Wu15]}) but it was proved in \cite[Section 4.5 ]{[Kaw22]} that this is not the case. We will state the relevant result for the case of $\C P^n$ and $Q^n$ which we will use later in Section \ref{section of examples}.

\begin{prop}\label{comparison lemma}
\begin{enumerate}
    \item For $\C P^n$, $QH( \C P^n;\Lambda_{\text{Lau}})$ is a field and $QH( \C P^n;\Lambda)$ splits into a sum of $n+1$ fields but the {\EP} {\qmor}s all coincide:
    $$\ovl{c}_{1_X} =\ovl{c}_{e_X}$$
    for any unit of a field factor $e_X \in Q_j$.
    
    \item For $Q^n$, $QH( Q^n;\Lambda_{\text{Lau}})$ splits into a sum of two fields, i.e. 
    $$QH( Q^n;\Lambda_{\text{Lau}}) =Q_+ \oplus Q_- , $$
    and $QH(Q^n;\Lambda)$ splits into a sum of finer fields but the {\EP} {\qmor}s all coincide:
    $$\ovl{c}_{e_+} =\ovl{c}_{e_X} $$
    or
    $$\ovl{c}_{e_-} =\ovl{c}_{e_X}$$
    for any unit of a field factor $e_X \in Q_j$, depending on whether $Q_j$ is splitted from $Q_+ $ or $ Q_-$.
\end{enumerate}
\end{prop}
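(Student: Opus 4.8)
The plan is to unwind the definition of the {\EP} {\qmor} in terms of {\specinv}s and reduce the statement to a purely algebraic comparison between the idempotent decompositions of $QH(X;\Lambda_{\text{Lau}})$ and $QH(X;\Lambda)$, together with the compatibility of {\specinv}s with the base-change map $\Lambda_{\text{Lau}} \hookrightarrow \Lambda$. The key point is that for $\C P^n$ (resp. $Q^n$) the splitting of $QH(X;\Lambda)$ into field factors is obtained from the splitting of $QH(X;\Lambda_{\text{Lau}})$ purely by extending scalars, so the unit $e_X$ of any field factor of $QH(X;\Lambda)$ equals $\lambda \cdot e$ for $e$ a unit of a field factor of $QH(X;\Lambda_{\text{Lau}})$ and $\lambda$ a root of unity in $\C \subset \Lambda$ — in fact one does not even need $\lambda$ to be a unit, only that $\nu(\lambda)=0$. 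Then the Novikov-shift property \eqref{novikov shift} of {\specinv}s gives $c(H,e_X)=c(H,e)+\nu(\lambda)=c(H,e)$, and after homogenizing, $\ovl{c}_{e_X}=\ovl{c}_{e}$.

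First I would recall, following \cite[Section 4.2, 4.5]{[Kaw22]}, the explicit idempotent decompositions. For $\C P^n$, $QH(\C P^n;\Lambda_{\text{Lau}})=\C[t^{\pm 1}][x]/(x^{n+1}=t)$ (up to normalization of $t$) is a field, and passing to $\Lambda$ one adjoins all $(n+1)$-st roots of $T^{c}$ for the appropriate $c$, so $QH(\C P^n;\Lambda)=\bigoplus_{j=1}^{n+1} Q_j$ with the $j$-th idempotent $e_j=\frac{1}{n+1}\sum_{\ell} \omega^{-\ell j}(x/\eta)^\ell$ for $\omega$ a primitive $(n+1)$-st root of unity and $\eta$ a fixed $(n+1)$-st root of $T^c$; each $e_j$ differs from a fixed one by multiplication by a power of $\omega\in\C$, which has valuation zero. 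For $Q^n$ one has the analogous picture starting from $QH(Q^n;\Lambda_{\text{Lau}})=Q_+\oplus Q_-$, where $Q_\pm$ correspond to the two values of the quantum Euler class / the two critical values of the superpotential; extending scalars refines each $Q_\pm$ into a sum of fields, and again the idempotents of the finer factors sitting inside (say) $Q_+$ all differ from the idempotent $e_+$ of $Q_+\otimes_{\Lambda_{\text{Lau}}}\Lambda$ by multiplication by valuation-zero scalars. (One should note $e_+\otimes 1$ need not be a single idempotent of $QH(Q^n;\Lambda)$ but is a sum of the refined ones; what matters is that $c(H,e_+)=\min_j c(H,e_j')$ over the refined idempotents $e_j'$ splitting off from $Q_+$, and each $c(H,e_j')=c(H,e_+)$ by the valuation-zero argument, so the min is attained and equals $c(H,e_+)$.)

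The second step is to record that {\specinv}s are insensitive to the field extension: if $e\in QH(X;\Lambda_{\text{Lau}})$ then $c_{\Lambda_{\text{Lau}}}(H,e)=c_{\Lambda}(H,e\otimes 1)$, because the filtered Floer complex over $\Lambda$ is obtained from the one over $\Lambda_{\text{Lau}}$ by a base change that preserves the action filtration (the $T$-adic valuations agree), so the infimum defining $c$ is unchanged. Combining this with the Novikov-shift identity \eqref{novikov shift} — $c(H,\lambda a)=c(H,a)+\nu(\lambda)$ — applied to the scalars $\lambda=\omega^m\in\C$ with $\nu(\omega^m)=0$, we get $c(H,e_X)=c(H,e)$ for $e_X$ any refined idempotent and $e$ the coarse one it refines; dividing by $k$ along powers $H^{\#k}$ and taking the limit yields $\ovl{c}_{e_X}=\ovl{c}_{e}$, which is $\ovl{c}_{e_X}=\ovl{c}_{1_X}$ for $\C P^n$ and $\ovl{c}_{e_X}=\ovl{c}_{e_\pm}$ for $Q^n$.

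The main obstacle, I expect, is not any single estimate but making precise the claim that the idempotent decomposition of $QH(X;\Lambda)$ is ``just'' the scalar extension of that of $QH(X;\Lambda_{\text{Lau}})$, with all refined idempotents related to coarse ones by $\C$-scalars of valuation zero; this requires an honest description of the quantum cohomology rings of $\C P^n$ and $Q^n$ over both coefficient rings and of how $\Spec$ behaves under the extension $\Lambda_{\text{Lau}}\hookrightarrow\Lambda$ (concretely, that the relevant polynomials factor over $\Lambda$ into linear factors whose roots differ by roots of unity). For $\C P^n$ this is elementary; for $Q^n$ one must handle the two ``halves'' separately and use the known presentation of $QH(Q^n)$ (the quantum Lefschetz / the computation underlying the two field factors), which is where the bookkeeping is heaviest but still routine given \cite{[Kaw22]}.
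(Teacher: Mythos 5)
The paper does not prove this proposition; it refers to \cite[Proof of Theorem 6, Remark 44]{[Kaw22]}. Your proposal attempts an independent argument, so let me assess it on its own terms.

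There is a genuine gap in the central algebraic claim. You assert that the unit $e_X$ of any field factor of $QH(X;\Lambda)$ equals $\lambda\cdot e$ for $e$ a unit of a field factor of $QH(X;\Lambda_{\text{Lau}})$ and $\lambda\in\C$ with $\nu(\lambda)=0$, and more concretely that ``each $e_j$ differs from a fixed one by multiplication by a power of $\omega$.'' This is false. Two distinct orthogonal idempotents can never be scalar multiples of one another: if $e_{j'}=\lambda e_j$ with $j\neq j'$ then $0=e_j e_{j'}=\lambda e_j^2=\lambda e_j$, forcing $\lambda=0$. Concretely, for $\C P^n$ with $e=1_X$ and $e_j=\frac{1}{n+1}\sum_{\ell=0}^{n}\omega^{-\ell j}(x/\eta)^\ell$, multiplying $e_j$ by a scalar $\mu$ scales every coefficient uniformly, whereas passing from $e_j$ to $e_{j'}$ rescales the coefficient of $(x/\eta)^\ell$ by $\omega^{\ell(j-j')}$, which depends on $\ell$; moreover $e_j$ has nonzero components in every cohomological degree while $1_X$ lives only in degree $0$, so $e_j\neq\lambda\cdot 1_X$ for any $\lambda$. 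Consequently the Novikov-shift identity \eqref{novikov shift}, which is what you invoke to force $c(H,e_X)=c(H,e)$, simply does not apply. The same failure recurs in your parenthetical remark for $Q^n$, where you claim ``each $c(H,e_j')=c(H,e_+)$ by the valuation-zero argument.''

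What actually makes the refined spectral invariants agree is a Galois/conjugation argument rather than a scalar-multiplication argument. There is a valuation-preserving $\Lambda_{\text{Lau}}$-algebra automorphism $\sigma$ of $\Lambda$ (e.g.\ $T^\lambda\mapsto e^{2\pi i\lambda/c}T^\lambda$, which fixes $T^c$ and sends $\eta\mapsto\omega\eta$). Since the filtered Floer complex over $\Lambda$ is the base change of the one over $\Lambda_{\text{Lau}}$, this $\sigma$ acts $\sigma$-semilinearly on $CF(H;\Lambda)$, commutes with the differential and PSS, and preserves the action filtration; hence $c(H,\sigma(a))=c(H,a)$. Because $\sigma$ cyclically permutes the roots of the defining polynomial, it permutes the idempotents $e_j$, giving $c(H,e_j)=c(H,e_{j'})$ for all $j,j'$. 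From there your endgame is essentially right but needs two corrections: $c(H,e_+)\leq\max_j c(H,e_j')$ (not $\min$), and the reverse inequality $c(H,e_j')=c(H,e_j'*e_+)\leq c(H,e_+)+\nu(e_j')$ only yields equality after homogenization (since $\nu(e_j')$ is typically nonzero, e.g.\ $\nu(e_j)=-cn/(n+1)<0$ for $\C P^n$), which is exactly why the proposition asserts $\ovl{c}_{1_X}=\ovl{c}_{e_X}$ for the asymptotic invariants rather than an equality of the unhomogenized $c$.
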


We do not give a proof as it is obtained by exactly the same argument as in \cite[Proof of Theorem 6, Remark 44]{[Kaw22]}.

\subsection{Examples}\label{section of examples}

\begin{enumerate}
    \item $(X,\Sigma)=(\C P^n, \C P^{n-1})$: The quantum cohomology rings of $\C P^n$ and $\C P^{n-1}$ with Laurent {\coeff} $\Lambda_{\text{Lau}}$ are both fields. Thus, according to Theorem \ref{EP quasimorphism}, their units $1_X$ and $1_\Sigma$ give rise to {\EP} {\qmor}s:
    
    \begin{equation}
\begin{gathered}
   \ovl{c}_{1_X} :  \wt{\Ham} (X) \to \R   \\
    \ovl{c}_{1_\Sigma}  : \wt{\Ham} (\Sigma) \to \R .
\end{gathered}
\end{equation}

Now, the skeleton for the pair $(\C P^n, \C P^{n-1})$ is a point which is \textit{small} {\wrt} $\ovl{c}_{1_X}$. Thus, Borman's reduction theorem \ref{Borman's reduction result} is applicable and $\Theta^\ast \ovl{c}_{1_X}$ is a {\qmor} for $\C P^{n-1}$, which we do not know at this point whether or not it is of {\EP}-type. The assumption of Theorem \ref{main thm} is satisfied for the pair of the Clifford tori $(\wt{L}=T^n _{\text{Clif}}$, $L=T^{n-1} _{\text{Clif}})$ whose {\suppot}s are as follows (see \cite{[Cho04]}):
     \begin{equation}
\begin{gathered}
  W_{T^{n} _{\text{Clif}}}= z_1 + z_2 + \cdots + z_n + \frac{1}{z_1 z_2 \cdots z_n}, \\
  W_{T^{n-1} _{\text{Clif}}}=z_1 + z_2 + \cdots + z_{n-1} + \frac{1}{z_1 z_2 \cdots z_{n-1}}.
\end{gathered}
\end{equation}
 These do have non-{\degen} critical points and by choosing non-{\degen} critical points as the local systems to setup Floer/pearl theory, we get units of field factors from the open-closed map by {\propo} \ref{Sanda}:
  \begin{equation}
\begin{gathered}
  e_{X}:= \frac{1}{\langle p_{\wt{L}}, p_{\wt{L}} \rangle}_{\text{Muk}} \mathcal{OC}^0 (pt_{\wt{L}})   , \\
  e_{\Sigma} := \frac{1}{\langle p_{K}, p_{K} \rangle}_{\text{Muk}} \mathcal{OC}^0 (pt_{L})   
\end{gathered}
\end{equation}
 
where $e_{X}$ (resp. $e_{\Sigma}$) is a unit factor in $QH(\C P^{n},\Lambda)$ (resp. $QH(\C P^{n-1},\Lambda)$). From {\propo} \ref{comparison lemma}, we have
  \begin{equation}
\begin{gathered}
  \ovl{c}_{1_X} = \ovl{c}_{e_X},\\
  \ovl{c}_{1_\Sigma} = \ovl{c}_{e_\Sigma}.
\end{gathered}
\end{equation}
 Thus, we obtain 
    $$\Theta^\ast \ovl{c}_{1_X} = \frac{1}{\kappa_\Sigma+1} \ovl{c}_{1_\Sigma} = \frac{n}{n+1} \ovl{c}_{1_\Sigma} . $$

    \item $(Q^n, Q^{n-1})$: The quantum cohomology rings of $Q^n$ and $Q^{n-1}$ with Laurent {\coeff} $\Lambda_{\text{Lau}}$ both split into direct sums of two fields:
     \begin{equation}
\begin{gathered}
   QH (Q^n )=QH (Q^n )_+ \oplus QH (Q^n )_- ,  \\
   QH (Q^{n-1} )=QH (Q^{n-1} )_+ \oplus QH (Q^{n-1} )_-. 
\end{gathered}
\end{equation}
    Thus, according to Theorem \ref{EP qmor}, the units of the field factors $e_{Q^n,\pm}$ and $e_{Q^{n-1},\pm}$ where 
        \begin{equation}
\begin{gathered}
   1_{Q^n}=e_{Q^n,+} + e_{Q^n,-},\\
   1_{Q^{n-1}}=e_{Q^{n-1},+} + e_{Q^{n-1},-},
\end{gathered}
\end{equation}
    give rise to {\EP} {\qmor}s:
    \begin{equation}
\begin{gathered}
   \ovl{c}_{e_{Q^n,\pm}} :  \wt{\Ham} (Q^n) \to \R  , \\
    \ovl{c}_{e_{Q^{n-1},\pm}}  : \wt{\Ham} (Q^{n-1}) \to \R .
\end{gathered}
\end{equation}
Now, the skeleton for the pair $(Q^n, Q^{n-1})$ is a {\lag} sphere 
$$ S^n  := \{z  \in \C P^{n+1}| z_0 ^2 + \cdots+z_{n} ^2+ z_{n+1} ^2 =0 ,\ z_0,\cdots,z_n \in \R,\ z_{n+1} \in i \R \} .$$
and it was proven in \cite[Theorem B]{Kaw23} that $Q^n \backslash S^n$ is $\ovl{c}_{e_{Q^n,+}}$-{\suphv} but not for $\ovl{c}_{e_{Q^n,-}}$-{\suphv}. Thus, by the criterion for smallness {\propo} \ref{criterion for smallness}, we see that $S^n$ is small {\wrt} $\ovl{c}_{e_{Q^n,+}}$ but not {\wrt} $\ovl{c}_{e_{Q^n,-}}$. Thus, Borman's reduction theorem \ref{Borman's reduction result} is applicable only to $\ovl{c}_{e_{Q^n,+}}$ and $\Theta^\ast \ovl{c}_{e_{Q^n,+}} $ is a {\qmor} for $Q^{n-1}$, which we do not know at this point whether or not it is of {\EP}-type. The assumption of Theorem \ref{main thm intro} is satisfied for the pair of monotone {\lag} tori $(\wt{L}=T^n _{\text{GZ}}$, $L=T^{n-1} _{\text{GZ}})$ (see Remark \ref{Biran and toric}). Their {\suppot}s are as follows (see \cite{[Kim]}):
    \begin{equation}
        \begin{gathered}
          W_{\wt{L}}(\wt{z}) = \frac{1}{z_{n}} + \frac{z_{n}}{z_{n-1}} + \cdots + \frac{z_{2}}{z_{1}} + 2z_{2} +z_1 z_2 ,\\
         W_{L}(z) = \frac{1}{z_{n-1}} + \frac{z_{n-1}}{z_{n-2}} + \cdots + \frac{z_{2}}{z_{1}} + 2z_{2} +z_1 z_2 .
        \end{gathered}
    \end{equation}

     These do have non-{\degen} critical points and by choosing non-{\degen} critical points as the local systems to setup Floer/pearl theory, we get units of field factors from the open-closed map by {\propo} \ref{Sanda}:
  \begin{equation}
\begin{gathered}
  e_{X}:= \frac{1}{\langle p_{\wt{L}}, p_{\wt{L}} \rangle}_{\text{Muk}} \mathcal{OC}^0 (pt_{\wt{L}})   , \\
  e_{\Sigma} := \frac{1}{\langle p_{L}, p_{L} \rangle}_{\text{Muk}} \mathcal{OC}^0 (pt_{L})   
\end{gathered}
\end{equation}
 
where $e_{X}$ (resp. $e_{\Sigma}$) is a unit factor in $QH(Q^{n};\Lambda_{\text{Lau}})_+$ (resp. $QH(Q^{n-1};\Lambda_{\text{Lau}})_+$). From {\propo} \ref{comparison lemma}, we have
     \begin{equation}
\begin{gathered}
  \ovl{c}_{e_{Q^n,+}} = \ovl{c}_{e_X},\\
  \ovl{c}_{e_{Q^{n-1},+}} = \ovl{c}_{e_\Sigma}.
\end{gathered}
\end{equation}
    
    Thus, we obtain
    $$\Theta^\ast \ovl{c}_{e_{Q^n,+}} = \frac{1}{\kappa_\Sigma+1} \ovl{c}_{e_{Q^{n-1},+}} = \frac{n-1}{n} \ovl{c}_{e_{Q^{n-1},+}} .$$

\begin{remark}\label{Biran and toric}
The monotone {\lag} torus $T^n _{\text{GZ}}$ in $Q^n$ that we use here was obtained by {\NNU} \cite{[NNU]} (see also Yoosik Kim \cite{[Kim]}) by considering a Gelfand--Zeitlin system via {\tordeg}. As the compatibility of {\tordeg} and Biran decomposition is not obvious, it is not obvious that $T^n _{\text{GZ}}$ in $Q^n$ coincides with the torus obtained by the Biran circle bundle construction to $T^{n-1} _{\text{GZ}}$ in $Q^{n-1}$ for the {\polar} $(Q^n,Q^{n-1})$, i.e. 
$$T^{n} _{\text{GZ}} =\wt{T^{n-1} _{\text{GZ}}} . $$
However, this compatibility was proven by the author in \cite[Theorem B]{Kaw23}.
\end{remark}

    \item $(\C P^3, Q^{2})$: With Laurent {\coeff} $\Lambda_{\text{Lau}}$,
    the quantum cohomology rings of $\C P^3$ is a field and of $Q^{2}$ splits into a direct sum of two fields, where the unit $1_\Sigma$ splits as
    $$1_\Sigma=e_{\Sigma,+} + e_{\Sigma,-} .$$
    Thus, according to Theorem \ref{EP qmor}, the units $1_X$, $e_{\Sigma,+}$ and $e_{\Sigma,-}$ give rise to {\EP} {\qmor}s:
    
    \begin{equation}
\begin{gathered}
   \ovl{c}_{1_X} :  \wt{\Ham} (X) \to \R   \\
    \ovl{c}_{e_{\Sigma,\pm}}  : \wt{\Ham} (\Sigma) \to \R .
\end{gathered}
\end{equation}

The skeleton for the pair $(\C P^3, Q^{2})$ is $ \R P^3 $. We know that the {\lag} torus $\wt{L}=T^3 _{\text{Ch}} $ is $\ovl{c}_{1_X}$-{\suphv} and as $\wt{L}=T^3 _{\text{Ch}} \subset \C P^3 \backslash \R P^3$, the set $\C P^3 \backslash \R P^3$ is also $\ovl{c}_{1_X}$-{\suphv}. Thus, by the criterion for smallness {\propo} \ref{criterion for smallness}, $\R P^3$ is small {\wrt} $\ovl{c}_{1_X}$. Thus, Borman's reduction theorem \ref{Borman's reduction result} is applicable to $\mu_{\C P^3;EP}$ 
    and $\Theta^\ast \mu_{\C P^3;EP} $ is a {\qmor} for $Q^{2}$, which we do not know at this point whether or not it is of {\EP}-type. The assumption of Theorem \ref{main thm intro} is satisfied for the pair of {\lag} tori $(\wt{L}=T^3 _{\text{Ch}}$, $L=T^{2} _{\text{Ch}})$ whose {\suppot}s were computed by Oakley--Usher \cite[Proof of {\cor} 8.6]{[OU16]} and Auroux \cite[{\cor} 5.13]{[Aur07]}, respectively, as follows:
    
    \begin{equation}
    \begin{aligned}
        W_{T^3 _{\text{Ch}}}& = \frac{1}{z_3} (z_1+z_1z_2 ^{-1} + z_1 ^{-1} z_2 +z_1 ^{-1} ) +z_3 ,\\
       W_{T^2 _{\text{Ch}}}&=z_1+z_1z_2 ^{-1} + z_1 ^{-1} z_2 +z_1 ^{-1} .
    \end{aligned}
    \end{equation}
    They have non-{\degen} critical points and by choosing non-{\degen} critical points as the local systems to setup Floer/pearl theory, we get units of field factors from the open-closed map by {\propo} \ref{Sanda}:
  \begin{equation}
\begin{gathered}
  e_{X}:= \frac{1}{\langle p_{\wt{L}}, p_{\wt{L}} \rangle}_{\text{Muk}} \mathcal{OC}^0 (pt_{\wt{L}})   , \\
  e_{\Sigma} := \frac{1}{\langle p_{L}, p_{L} \rangle}_{\text{Muk}} \mathcal{OC}^0 (pt_{L})   
\end{gathered}
\end{equation}
 
where $e_{X}$ (resp. $e_{\Sigma}$) is a unit factor in $QH(\C P^{3},\Lambda)$ (resp. $QH(Q^{n-1},\Lambda)$). From {\propo} \ref{comparison lemma}, we have
  \begin{equation}
\begin{gathered}
  \ovl{c}_{1_X} = \ovl{c}_{e_X},\\
  \ovl{c}_{1_\Sigma} = \ovl{c}_{e_{\Sigma,+}}.
\end{gathered}
\end{equation}
 Thus, we obtain 
    $$\Theta^\ast \ovl{c}_{1_X} = \frac{1}{\kappa_\Sigma+1} \ovl{c}_{1_\Sigma} = \frac{1}{2} \ovl{c}_{e_{\Sigma,+}} . $$

\begin{remark}
One should be able to generalize this example to $(\C P^n, Q^{n-1})$ by considering the pair of the {\lag} tori $(\wt{L}=T^n _{\text{Ch}}$, $L=T^{n-1} _{\text{Ch}})$ but the {\suppot} for $\wt{L}=T^n _{\text{Ch}}$ in $\C P^n$ needs to be computed.
\end{remark}

\end{enumerate}

\section{Discussions}\label{Discussion}

\subsection{About the assumption}

It is possible that the assumption on the {\suppot} of $L$ in Theorem \ref{main thm intro} (a.k.a \ref{main thm}) automatically implies the assumption on the {\suppot} of $\wt{L}$. 
\begin{question}
If the {\suppot} $W_L$ of $L$ has a non-{\degen} {\critpt}, then does the {\suppot} $W_{\wt{L}}$ of $\wt{L}$ also have a non-{\degen} {\critpt}?
\end{question}

The author learned that the relation between $W_L$ and $W_{\wt{L}}$ is currently being studied \cite{[DTVW]}.

As we claimed in Remark \ref{rmk main thm}, we expect the following to hold:

\begin{claim}\label{ideal thm}
Let $(X,\Sigma)$ be a pair of a closed monotone {\symp} {\mfd} and a Donaldson divisor therein. Assume there exists an {\EP} {\qmor} for $X$
$$\mu_{X} ^{EP} : \wt{\Ham} (X) \longrightarrow \R  $$
for which the skeleton $\Delta$ is small, i.e. 
$$ \tau_{\mu_{X} ^{EP}} (\Delta)=0 .$$

Then, there exist
\begin{itemize}
    \item There exists a monotone {\lag} torus $L$ in $\Sigma$ whose {\suppot} $W_L$ has a non-degenerate critical point.
    \item The {\suppot} $W_{\wt{L}}$ of the lifted monotone {\lag} torus $\wt{L}$ in $X$ also has a  non-degenerate critical point.
\end{itemize}
\end{claim}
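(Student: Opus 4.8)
\emph{Approach.} The plan is to run the implication of Theorem~\ref{main thm} in reverse, at the cost of invoking a form of Auroux's conjecture \cite[{\conjec}~1.1]{[Aur07]}: that a unit of a field factor of the quantum cohomology of a closed monotone {\symp} {\mfd} is always the image under a closed--open map of the unit of the {\lag} quantum cohomology of a monotone {\lag} torus whose {\suppot} carries a non-{\degen} {\critpt}. An {\EP} {\qmor} for $X$ is by definition $\ovl{c}_{e_X}$ for some {\idem} $e_X$ that is the unit of a field factor $Q_X\subset QH(X)$; let $\lambda\in\Lambda$ be the eigenvalue of $c_1(X)\ast-$ on $Q_X$. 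First I would use this principle to produce a monotone {\lag} torus $\wt{L}\subset X$ with a local system $\wt{\rho}$ such that $HF(\wt{L},\wt{\rho})\neq 0$, $\mathcal{CO}^{0}(e_X)=1_{\wt{L}}$, and $\wt{\rho}$ is a non-{\degen} {\critpt} of $W_{\wt{L}}$ with critical value $\lambda$; by Theorem~\ref{CO map heavy} and the homogeneity of $\ovl{c}_{e_X}$ such an $\wt{L}$ is then $e_X$-{\suphv}.

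The second, and in my view hardest, step is to upgrade ``$\wt{L}$ exists somewhere in $X$'' to ``$\wt{L}$ is, up to Hamiltonian isotopy, a {\lag} circle bundle over $\Sigma$''. This is where the smallness hypothesis $\tau_{\mu_{X}^{EP}}(\Delta)=0$ enters: since $\Delta$ is $\ovl{c}_{e_X}$-null, the complement $X\backslash\Delta\simeq D\Sigma$ is $\ovl{c}_{e_X}$-full, so one expects to displace the $e_X$-{\suphv} torus $\wt{L}$ off $\Delta$ and, via the Biran decomposition (Theorem~\ref{biran decomp}), to push it inside $F(D\Sigma_{r})$ for every $r$ slightly above the monotone radius $r_0$. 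A {\BC}-type positivity and neck-stretching argument --- in the spirit of \cite[{\propo}~5.1]{[BK13]} and \cite[{\propo}~6.4.1]{[BC09]} --- should then force $\wt{L}$ to be Hamiltonian isotopic to $\wt{L}_{r_0}$, the monotone lift of its projection $L:=\pi(\wt{L})\subset\Sigma$, monotonicity of $L$ being recovered by reading the relation $r_0^2=\tfrac{2\kappa_L}{2\kappa_L+1}$ backwards. Controlling the Hamiltonian isotopy class of $\wt{L}$ from the sole information that $\Delta$ is $\ovl{c}_{e_X}$-null is the delicate point, and already the unconditional existence of $\wt{L}$ in the first step is open in general, being essentially Auroux's conjecture.

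Granting the identification $\wt{L}=\wt{L}_{r_0}$, the last step transfers non-degeneracy between the two {\suppot}s. By the count underlying {\BK}'s quantum Gysin sequence \cite{[BK13]} --- and, in the generality needed here, by Diogo--Tonkonog--Vianna--Wu \cite{[DTVW]} --- the lifted {\suppot} has the shape $W_{\wt{L}}(z_1,\ldots,z_{n-1},w)=w^{-1}P(z_1,\ldots,z_{n-1})+c\,w$ with $c\neq 0$ and $P$ assembled from the Maslov-$2$ count of $L$ in $\Sigma$; the equation $\partial_w W_{\wt{L}}=0$ forces $w^2=P/c$, and substituting identifies the non-{\degen} {\critpt}s of $W_{\wt{L}}$ with those of $P$, hence of $W_L$ up to the known normalization --- which is also the content of Remark~\ref{rmk main thm}(3). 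On the Floer-theoretic side the same compatibility is encoded by the filtered Floer--Gysin package of Sections~\ref{Floer--Gysin sequence}--\ref{Filtered Floer--Gysin sequence} and the vanishing of the connecting map (Corollary~\ref{connecting map Floer}), so once the geometric step is settled only a routine Hessian computation remains. Combining the three steps yields the {\lag} tori $L\subset\Sigma$ and $\wt{L}\subset X$ demanded by the Claim; the argument is conditional on Auroux's conjecture and on the {\suppot} comparison of \cite{[DTVW]}, which is why the statement is recorded here only as a Claim.
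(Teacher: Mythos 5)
The paper itself does not prove Claim~\ref{ideal thm}: it is explicitly recorded as an expectation, motivated (in Remark~\ref{rmk main thm}(2)--(3)) by Auroux's conjecture and by the forthcoming {\suppot} comparison of \cite{[DTVW]}, and the text immediately after the Claim treats it as a hypothesis (``If Claim~\ref{ideal thm} holds, then\ldots''). Your proposal is therefore not in competition with a proof in the paper but with the paper's own heuristic, and to that extent you have reconstructed the intended circle of ideas: Auroux's conjecture to produce a detecting monotone torus, the Biran picture to relate $X$ and $D\Sigma$, and the {\suppot} relation of \cite{[DTVW]} to transfer non-degeneracy. You are also correctly candid that the first and third steps are open inputs.

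However, your middle step contains a genuine gap that goes beyond the acknowledged conditionality, and it is worth naming it precisely. Granting Auroux's conjecture gives you \emph{some} monotone {\lag} torus $\wt{L}\subset X$ with $\mathcal{CO}^{0}(e_X)=1_{\wt{L}}$, hence $e_X$-{\suphv} by Theorem~\ref{CO map heavy}. From the hypothesis that $\Delta$ is $\ovl{c}_{e_X}$-null you would like to conclude that $\wt{L}$ is Hamiltonian isotopic to the circle-bundle lift $\wt{L}_{r_0}$ of a monotone torus $L=\pi(\wt{L})\subset\Sigma$. This does not follow. Nullity of $\Delta$ does not displace $\wt{L}$ off $\Delta$ (a null set and a superheavy set may intersect), and even assuming $\wt{L}\subset F(D\Sigma_{r})$, there is no mechanism --- neck-stretching, positivity of intersections, or otherwise --- that forces a monotone torus inside the disk bundle to be a fibred circle bundle over a Lagrangian in the base. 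Monotone Lagrangian tori in a given symplectic manifold typically come in infinitely many Hamiltonian-isotopy classes, and ``lives in $D\Sigma$'' is a far weaker constraint than ``is an $S^1$-bundle over a Lagrangian in $\Sigma$''. The citation of \cite[{\propo}~5.1]{[BK13]} and \cite[{\propo}~6.4.1]{[BC09]} controls holomorphic curves for an \emph{a priori given} circle-bundle Lagrangian; it does not recognize when an arbitrary Lagrangian has that shape. This is the missing idea, not a routine technicality, and it is arguably harder than the two inputs you explicitly flag. The paper leaves the entire Claim open for precisely this reason, and an honest proof proposal should identify this rigidity step as an independent open problem rather than fold it into ``one expects to''.
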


If Claim \ref{ideal thm} holds, then the Borman's question will be solved in the following ideal form.

\begin{conj}
Let $(X,\Sigma)$ be a pair of a closed monotone {\symp} {\mfd} and a Donaldson divisor therein. If there exists an {\EP} {\qmor} for $X$
$$\mu_{X} ^{EP} : \wt{\Ham} (X) \longrightarrow \R  $$
for which the skeleton $\Delta$ is small, then it's reduction to $\Sigma$ is also an {\EP} {\qmor}, i.e. there exists an {\EP} {\qmor} for $\Sigma$ 
$$\mu_{\Sigma} ^{EP} : \wt{\Ham} (\Sigma) \longrightarrow \R $$
that satisfies
$$   \Theta^\ast \mu_{X} ^{EP} = \mu_{\Sigma} ^{EP} .  $$
\end{conj}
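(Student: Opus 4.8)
The plan is to reduce the conjecture to Claim \ref{ideal thm}: once one knows that the hypothesis (an {\EP} {\qmor} $\mu_X^{EP}=\ovl{c}_{e}$ for $X$ with $\tau_{\ovl{c}_e}(\Delta)=0$) forces the existence of a monotone {\lag} torus $L\subset\Sigma$ with a non-{\degen} {\critpt} of $W_L$ whose Biran lift $\wt{L}\subset X$ likewise has a non-{\degen} {\critpt} of $W_{\wt{L}}$, Theorem \ref{main thm} produces an {\EP} {\qmor} $\ovl{c}_{e_\Sigma}$ for $\Sigma$ with $\Theta^\ast\ovl{c}_{e_X}=\frac{1}{2\kappa_L+1}\ovl{c}_{e_\Sigma}$, and this is the desired reduction up to the universal positive constant $\frac{1}{2\kappa_L+1}$, exactly as in Example \ref{example intro}. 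For this to settle the conjecture on the nose one further point is needed: the idempotent $e_X=e_{X,\wt{L}}$ furnished by {\propo} \ref{Sanda} must generate the same field factor of $QH(X;\Lambda_{\text{Lau}})$ as the given $e$, so that $\ovl{c}_{e_X}=\mu_X^{EP}$. I would handle this with a twofold input: a {\propo} \ref{comparison lemma}-type statement that any two {\EP} {\qmor}s attached to field factors of $QH(X;\Lambda)$ lying over a common field factor of $QH(X;\Lambda_{\text{Lau}})$ coincide; and the observation that, $\ovl{c}_e$ being {\homo}, it is determined by the (super)heaviness it detects, so it suffices to check that $\wt{L}$ is $\mu_X^{EP}$-{\suphv} -- which one should be able to read off from the smallness of $\Delta$ {\wrt} $\mu_X^{EP}$ together with $\wt{L}\subset X\setminus\Delta\simeq D\Sigma$.

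The substantive content is therefore Claim \ref{ideal thm}, which I would attack in two steps. First, descend the field-factor datum from $X$ to $\Sigma$: since $\Delta$ is small {\wrt} $\ovl{c}_e$, the {\qmor} $\ovl{c}_e$ is supported on $X\setminus\Delta\simeq D\Sigma$ and Borman's reduction $\Theta^\ast\ovl{c}_e$ is a nontrivial {\homo} {\qmor} on $\wt{\Ham}(\Sigma)$; running the comparison of {\specinv}s through the filtered Floer--Gysin sequence of Sections \ref{Floer--Gysin sequence}--\ref{Filtered Floer--Gysin sequence} \emph{in reverse} -- or, better, upgrading the Biran decomposition to a quantitative statement relating $QH(\Sigma;\Lambda_{\text{Lau}})$ to the part of $QH(X;\Lambda_{\text{Lau}})$ ``supported near $\Sigma$'' -- one expects to conclude that $QH(\Sigma;\Lambda_{\text{Lau}})$ itself has a field factor, i.e. that $\Sigma$ carries an {\EP} {\qmor}. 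Second, invoke closed-string mirror symmetry for $\Sigma$: a monotone {\symp} {\mfd} whose quantum cohomology has a field factor with idempotent $e_\Sigma$ should contain a monotone {\lag} torus $L$ with $\mathcal{CO}^0(e_\Sigma)\neq 0$ corresponding to a non-{\degen} {\critpt} of $W_L$ -- this is precisely the content of Auroux's conjecture \cite[{\conjec} 1.1]{[Aur07]}. Granting such an $L$, its Biran lift $\wt{L}$ is forced, and the assertion that $W_{\wt{L}}$ again has a non-{\degen} {\critpt} would follow from the comparison between $W_L$ and $W_{\wt{L}}$ being established in \cite{[DTVW]} (in the toric and quadric cases this is precisely the computation carried out for $(\C P^n,\C P^{n-1})$, $(Q^n,Q^{n-1})$ and $(\C P^3,Q^2)$ in Section \ref{section of examples}, c.f. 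Remark \ref{Biran and toric}).

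The main obstacle is the second step: producing an honest monotone {\lag} torus with a non-{\degen} {\suppot} {\critpt} out of the purely algebraic datum of a field factor of $QH(\Sigma)$ is the hard ``quantum cohomology $\Rightarrow$ geometry'' direction of mirror symmetry, known only in a handful of cases (toric Fano manifolds, quadrics, and the like) with no general mechanism available. A secondary difficulty is the first step: the Floer--Gysin machinery of this paper is built for a \emph{prescribed} pair $(L,\wt{L})$, so reversing it to manufacture a field factor of $QH(\Sigma)$ from smallness of $\Delta$ alone requires genuinely new input -- presumably a ring-level (rather than {\lag}-Floer-level) quantitative form of the Biran decomposition. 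Finally, the bookkeeping of field factors between the Laurent and Novikov coefficients, routine in examples via {\propo} \ref{comparison lemma}, would need to be made systematic in order to identify $\Theta^\ast\mu_X^{EP}$ with $\ovl{c}_{e_\Sigma}$ exactly rather than merely with some {\EP} {\qmor} for $\Sigma$.
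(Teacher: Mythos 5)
The paper does not prove this statement: it is left as a \emph{conjecture} at the very end of Section~\ref{Discussion}, immediately preceded by the sentence ``If Claim~\ref{ideal thm} holds, then the Borman's question will be solved in the following ideal form,'' and Claim~\ref{ideal thm} itself is only asserted as an expectation (``we expect the following to hold''), not proved. So there is no proof in the paper to compare against; the right answer here is to recognize that and explain what is missing, which is exactly what you do.

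Your reduction of the conjecture to Claim~\ref{ideal thm} via Theorem~\ref{main thm} is precisely the logic the paper intends, and you have accurately located the genuine gaps: (i) the ``quantum cohomology $\Rightarrow$ geometry'' step, i.e.\ producing from a field factor of $QH(\Sigma)$ a monotone Lagrangian torus $L$ whose superpotential has a non-degenerate critical point, is open in general and is essentially Auroux's conjecture, cited in Remark~\ref{rmk main thm}(2); (ii) the passage from $W_L$ to $W_{\wt L}$ is deferred in the paper to \cite{[DTVW]}, listed ``in preparation,'' and is checked by hand only in the examples of Section~\ref{section of examples}; (iii) the Laurent-vs.-Novikov field-factor bookkeeping (Proposition~\ref{comparison lemma}) is verified only for $\C P^n$ and $Q^n$ and has no general statement. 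You also correctly flag that Theorem~\ref{main thm} gives $\Theta^\ast\ovl c_{e_X}=\frac{1}{2\kappa_L+1}\,\ovl c_{e_\Sigma}$, a positive multiple of an EP quasimorphism rather than the asymptotic spectral invariant itself, so the equality in the conjecture (and in Theorem~\ref{main thm intro}) should be read ``up to a positive constant,'' consistent with the ``up to a constant factor'' wording in Example~\ref{example intro}. In short: your ``proposal'' is not a proof and does not claim to be one, and that matches the paper, which offers none either. If anything, you go slightly further than the paper by sketching a plausible two-step attack on Claim~\ref{ideal thm} (descent of the field factor, then mirror symmetry), whereas the paper merely states the claim and moves on.
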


\begin{remark}
\begin{enumerate}
    \item We should be able to get more examples if we have more data of the {\suppot}s. The study of the {\suppot} is beyond the scope of this paper.
    
    \item The present paper only deals with monotone {\symp} {\mfd}s and {\lag}s for technical reasons but the author expects that similar argument should work for {\symp} {\mfd}s without the monotonicity condition. 
\end{enumerate}
\end{remark}

        \subsection{The {\hamil} torus action case}

In \cite{[Bor13]}, Borman establishes a similar reduction of {\qmor}s on $\wt{\Ham} (X)$ for the case where the {\symp} {\mfd} $X$ admits a {\hamil} torus action and asks an analogous question to Question \ref{Borman's question} for this case. One approach this question is to establish a version of results in \cite{[Sch21]} with action filtration and do a similar argument as in this paper. Note that {\thm} \ref{main thm intro} is precisely the $k=1$ case of the {\hamil} torus action version problem, as the {\suphv} assumption is also satisfied by \eqref{suphv property}.

\appendix

\Addresses

\end{document}